\documentclass[final,hidelinks,onefignum,onetabnum]{siamart251216}

\usepackage{amsfonts}
\usepackage{graphicx}           
\usepackage{float}
\usepackage{subcaption}
\usepackage{setspace}
\usepackage[english]{babel}

\usepackage{mathrsfs}
\usepackage{booktabs}

\newtheorem{thm}{Theorem}[section]

\newtheorem{lem}[thm]{Lemma}

\newtheorem{remark}{Remark}

\headers{Level-set PINNs for domain inverse problems of gravimetry}{Yao, Li, and Qian}
\title{Level-set physics-informed neural networks for domain inverse problems of gravimetry}

\author{
  Jingnan Yao\thanks{
                   School of Science, Harbin Institute of Technology, Shenzhen,
                   Shenzhen, 518055, China
                   (\email{24B958002@stu.hit.edu.cn}, \email{liwenbin@hit.edu.cn}).}
  \and
  Wenbin Li\footnotemark[1] \thanks{Corresponding author.}
  \and
  Jianliang Qian\thanks{Department of Mathematics and Department of CMSE,
                       Michigan State University, East Lansing, MI 48824, USA
                       (\email{jqian@msu.edu}).}
}

\begin{document}
\maketitle

\begin{abstract}
We propose level-set physics-informed neural networks (PINNs) for domain inverse problems of gravimetry. The domain inverse problem establishes a correctness class for ill-posed inverse gravimetry, which we solve within the PINNs framework. Directly representing the domain inverse problem via neural networks is problematic due to the discontinuous nature of interfaces. We consider a level-set formulation where the neural network represents a continuous level-set function, and its zero level-set depicts sharp interfaces. To overcome the challenges of exploding and vanishing gradients caused by sharp interfaces during training, we propose an interface-aware backpropagation strategy. By redefining the derivative associated with interface evolution, this strategy enables a broader support region to drive the evolution process. Detailed analysis is provided to justify the efficacy of this strategy. Additionally, we introduce a simple procedure for adaptive refinement of collocation points near interfaces. The selection of network architecture is investigated by studying the solution spaces and the approximation properties of neural networks. Finally, extensive 2D and 3D numerical examples demonstrate the effectiveness of the proposed method.
\end{abstract}

\begin{keywords}
Level-set; Physics-informed neural networks; Domain inverse problems; Inverse gravimetry
\end{keywords}

\begin{MSCcodes}
65N21; 49Q10; 68T07; 86A22
\end{MSCcodes}

\section{Introduction}
Gravimetric surveys are valuable in the exploration of large-scale and deep-seated structures. The primary objective of gravity inversion is to reconstruct the internal density distribution of the Earth using surface measurements of the gravity potential or related anomaly data \cite{zhd15}. It is also useful in deep-space exploration and planetary science, allowing scientists to probe the deep internal structures of celestial bodies \cite{wiephi98,zuberetc13}.
However, the inverse problem of gravimetry is notoriously ill-posed. As dictated by the equivalent source principle, an infinite number of subsurface mass configurations can produce identical gravity responses, leading to severe non-uniqueness in the Hadamard sense \cite{isa90}.

To tackle the ill-posedness of inverse gravimetry, classical methodologies have relied on constructing regularization terms based on prior geological or mathematical assumptions \cite{hansen92}. Prominent strategies include depth-weighting regularization, which is designed to counteract the natural decay of potential fields with depth \cite{liold98}, and sparsity-promoting regularization, which favors the recovery of compact anomalous bodies with sharp boundaries \cite{far08}. Driven by advancements in well-posedness theory, an alternative and highly effective paradigm has emerged: mitigating ill-posedness by explicitly restricting the mathematical form of the solution. A particularly successful framework parameterizes the anomalous mass as a volume distribution $ f\chi_D$, where $f$ denotes the density contrast and $\chi_D$ is the characteristic function of the domain $D$. Given a priori knowledge of $f$ and certain geometric constraints on $D$, the domain inverse problem of gravimetry admits a unique solution \cite{isa90}. This uniqueness result has lead to the use of level-set methods in inverse gravimetry \cite{isaleuqia11,isaleuqia13,liqia21,liqia22,liden26software}, where the unknown domain $D$ is recovered implicitly by tracking its boundary $\partial D$ as the zero-level-set of a level-set function.

Recently, the application of neural networks and deep learning strategies has emerged as a growing trend in the study of inverse gravimetry. End-to-end learning approaches train neural networks to directly map gravity data to density reconstructions \cite{hualiuqizha21,zhazhaliufan22,yanhuliujiewanche22,zhochelvwan23,cheli24}. Although efficient, these end-to-end methods exhibit inherent limitations regarding solution reliability and generalization \cite{cheli24}. Specifically, when a significant discrepancy exists in the mass distribution between the prediction and training datasets, or when the gravity data sampling scheme in the inference task deviates from that used during training, end-to-end neural networks struggle to yield correct results. To enhance reliability and generalization, a prevalent trend involves expanding the training dataset and employing diverse geological models \cite{hualiuqizha21,zhazhaliufan22,zhochelvwan23}. In contrast, \cite{cheli24} advocates restricting learning and prediction to the correctness class of the inverse gravimetry problem. However, none of these approaches fundamentally resolve the underlying issue: without the incorporation of physical models, end‑to‑end neural networks remain inherently limited.

Physics-Informed Neural Networks (PINNs) establish a new paradigm that incorporates governing physical laws, typically partial differential equations (PDEs), into the training of neural networks \cite{raiperkar19}. Both the data misfit and the residual of governing equations are penalized in the loss function. In this way, the network is trained not just to fit the empirical data, but also to obey the underlying physical models. The paradigm of PINNs has been widely adopted to solve a broad spectrum of forward and inverse problems; comprehensive reviews can be found in, e.g., \cite{cuodietc22,nganinetc23}. In the context of elliptic inverse source problems, for instance, \cite{daijinsauzho25} employs the framework of PINNs to solve elliptic optimal control problems, treating the unknown source as the control variable and embedding the Karush-Kuhn-Tucker (KKT) optimality system as penalty terms in the loss function. Moreover, \cite{hujinzho26} develops a singularity-enrichment neural network for identifying point sources in the Poisson equation from Cauchy data. This method decomposes the state variable into an analytical singular part, represented by the fundamental solution, and a regular part that is resolved using the paradigm of PINNs.

A characteristic of PINNs is their inherent restriction to modeling sufficiently smooth solutions. This restriction stems partly from the various regularization terms incorporated into the loss function to ensure training stability; more fundamentally, it is dictated by the intrinsic continuity of neural network architectures. For domain inverse problems involving discontinuous interface structures, applying the framework of PINNs therefore requires specialized strategies. There are some recent studies on PINNs for solving partial differential equations involving interfaces \cite{hulinlai22,tselinhulai23,tselai23,hushilinlai24}. In \cite{hulinlai22}, it solves the forward problem of an elliptic equation where the solution admits discontinuities on a given interface. The key innovation is to represent a $d$-dimensional piecewise continuous solution by a continuous augmented function defined in ($d+1$)-dimensional space. Then PINNs can be naturally adopted to solve the augmented continuous function. In \cite{tselinhulai23,tselai23}, the idea of continuous function extension, i.e. expressing a d-dimensional discontinuous solution by a (d+1)-dimensional continuous neural network function, is extended to capture cusps, where the solution itself is continuous but has a jump discontinuity in its normal derivative across a given interface. Although the discontinuity representation strategy in \cite{hulinlai22,tselinhulai23,tselai23} shares conceptual similarities with the level-set method, its application is restricted to forward problems. As a result, the rich interface-tracking machinery of the level-set method remains underutilized. Additionally, it requires lifting the $d$-dimensional discontinuous solution to $(d+1)$ dimensions. In \cite{hushilinlai24}, it solves the forward problem of partial differential equations on static and evolving surfaces using PINNs. The level-set function is employed to represent a given surface, or an evolving surface with a prescribed velocity field, so that the surface normal and mean curvature in the PDE solutions can be easily computed. However, the work does not address the inverse problem of surface reconstructions. More recently, \cite{lujuzhu25} proposes Multi-TransNet to solve elliptic equations across domains separated by interfaces. By integrating multiple distinct TransNets via a non-overlapping domain decomposition approach, the method isolates discontinuities at subdomain boundaries and enforces interface jump conditions within the physical loss function. Nevertheless, like the former works, it focuses on the forward problem rather than the inverse problem of interface reconstructions.

In this work, we propose level-set physics-informed neural networks (PINNs) for domain inverse problems of gravimetry. The shape of the unknown domain is represented by the zero level-set of a continuous neural network, without lifting the spatial coordinates into a higher dimension. We adopt the paradigm of PINNs to construct the physical loss function, and leverage the interface evolution mechanism inherent to the level-set method to drive the network training. The remainder of the paper is organized as follows. Section \ref{sec2} explains the domain inverse problems of gravimetry and their related well-posedness results. Section \ref{sec3} proposes the methodology of level-set physics-informed neural networks (PINNs). In this section, we construct the loss function of level-set PINNs, develop an interface-aware backpropagation algorithm for network training, and introduce a simple strategy for adaptive refinement of collocation points near interfaces; moreover, we study the choice of network architectures by investigating their solution spaces and approximation properties. Section \ref{sec:results} presents both 2D and 3D numerical examples to demonstrate the algorithm's efficacy. Section \ref{sec5} draws conclusions.

\section{Inverse gravimetry and domain inverse problems} \label{sec2}
The gravitational potential $U$ generated by an anomalous density $\rho$ with $\operatorname{supp}\rho\subset\Omega$ can be described by a Fredholm integral equation of the first kind,
\begin{equation}\label{eqn1}
U(\mathbf{y};\rho)=\gamma\int_\Omega K(\mathbf{y},\mathbf{x})\rho(\mathbf{x})\,\mathrm{d}\mathbf{x}\,.
\end{equation}
The integral kernel $K(\mathbf{y},\mathbf{x})=K(|\mathbf{y}-\mathbf{x}|)$ is the fundamental solution to Laplace's equation,
\begin{equation}\label{eqn2}
K(\mathbf{y},\mathbf{x})=\left\{\begin{array}{ccc}
-\frac{1}{2\pi}\mathrm{ln}|\mathbf{y}-\mathbf{x}| &,&\mathbf{x},\mathbf{y}\in\mathbf{R}^2\,,\\
\frac{1}{4\pi|\mathbf{y}-\mathbf{x}|} &,&\mathbf{x},\mathbf{y}\in\mathbf{R}^3\,.
\end{array}\right.
\end{equation}
The constant $\gamma$ is related to the universal gravitational constant $\gamma_0 = 6.67384 \times 10^{-8} \ \mathrm{cm}^3 \mathrm{g}^{-1} \mathrm{s}^{-2}$; in $\mathbf{R}^d$ ($d=2,3$), it is defined as $\gamma = 2^{d-1}\pi \gamma_0$. 
Alternatively, the gravitational potential can be modeled by the partial differential equation (PDE)
\begin{equation} \label{eqn3}
-\Delta U(\mathbf{x})=\gamma\rho(\mathbf{x})\,.
\end{equation}
For a compactly supported density $\rho(\mathbf{x})$ ($\operatorname{supp}\rho\subset\Omega$), the far-field boundary condition is,
\begin{equation} \label{eqn4}
\left\{\begin{array}{ccc}
U(\mathbf{x})\sim -\frac{\gamma M}{2\pi}\mathrm{ln}|\mathbf{x}| &,&\mathrm{as} \  |\mathbf{x}|\to\infty \quad  (\mathbf{x}\in\mathbf{R}^2)\,,\\
U(\mathbf{x})\to 0 &,&\mathrm{as} \  |\mathbf{x}|\to\infty \quad (\mathbf{x}\in\mathbf{R}^3)\,,
\end{array}\right.
\end{equation}
where $M$ denotes the total mass anomaly: $M=\int_\Omega\rho(\mathbf{x})\,\mathrm{d}\mathbf{x}$.
The gravity acceleration $\mathbf{g}$ is then given by the gradient of the potential, $\mathbf{g}:= \nabla U$. The inverse problem of gravimetry consists of recovering the anomalous density $\rho(\mathbf{x})$, with $\operatorname{supp}\rho\subset\Omega$, from measurements of the gravity acceleration $\mathbf{g}$ taken on a set $\Sigma_0 \subset \mathbf{R}^d \setminus \Omega$.

It is well known that the inverse problem of gravimetry is severely ill-posed; for instance, the same measured acceleration can correspond to vastly different density anomalies. To restore well-posedness, and in particular uniqueness, we consider a volume mass distribution where the density $\rho$ takes the form of $\rho(\mathbf{x})=f(\mathbf{x})\chi_D(\mathbf{x})$. Here, $\chi_D$ denotes the characteristic function of a domain $D$: $\chi_D(\mathbf{x})=1$, $\mathbf{x}\in D$; $\chi_D(\mathbf{x})=0$, $\mathbf{x}\notin D$. This formulation is motivated by the following well-posedness results \cite{isa90, liqia21,denzhalima25}.
\begin{thm}\label{thm1}
Let $\Omega_0$ be a convex domain with analytic (regular) boundary, $\Sigma_0\subset\partial\Omega_0$  be a nonempty hyper-surface, and $\Omega\subset\Omega_0$ be a bounded domain with connected $\mathbf{R}^d\setminus\overline{\Omega}$. 
Consider a volume mass distribution with the density function $\rho(\mathbf{x})=f(\mathbf{x})\chi_D(\mathbf{x})$; $D\subset\Omega$ denotes the domain of mass anomaly, which admits piecewise smooth boundaries, and $\chi_D$ is the characteristic function of $D$. Given the modulus of gravity acceleration, $|\nabla U|$, on $\Sigma_0$, and given $f\ge0$ in $\Omega$, the domain $D$ can be uniquely determined if one of the following constraints is satisfied:

(1) $D$ is star-shaped with respect to its center of gravity, and $f$ is constant;

(2) $D$ is convex in one direction, e.g. in $x_d$, where $x_d$ denotes a component of the spatial coordinate $\mathbf{x}=(x_1,\cdots,x_d)\in\mathbf{R}^d$, and $f$ is constant;

(3) $D$ is convex in $x_d$, $f$ does not depend on $x_d$, $f\in C(\Omega)$, and $\Omega\subset \mathrm{supp}\,f$;

(4) $D$ is convex, $f\in L_1(\Omega)$, and $0<f$ on $\Omega$.
\end{thm}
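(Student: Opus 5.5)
The proof reduces all four cases to uniqueness from exterior potential data, and then applies the classical uniqueness theorems of Novikov and Isakov. Throughout, let $D_1,D_2$ be two admissible domains with densities $f\chi_{D_1}$, $f\chi_{D_2}$ (the same $f$ in each case), and let $U_1,U_2$ be the corresponding potentials. Assume $|\nabla U_1|=|\nabla U_2|$ on $\Sigma_0$.

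\textbf{Step 1: from $|\nabla U|$ on $\Sigma_0$ to $U_1=U_2$ outside $\Omega$.} Each $U_j$ is harmonic in $\mathbf{R}^d\setminus\overline{\Omega}$, and this set is connected. Since $\Omega\subset\Omega_0$ with $\Omega_0$ convex and $\partial\Omega_0$ analytic, the field $\nabla U_j$ is real-analytic near $\Sigma_0$.

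The key point is that equality of moduli forces $\nabla U_1=\nabla U_2$. Because $f\ge0$, both potentials behave like point masses at infinity: $\nabla U_j\sim -\gamma M_j\,\mathbf{x}/(c_d|\mathbf{x}|^d)$. The convexity of $\Omega_0$ gives a transversality (sign) condition on the normal component $\partial_\nu U_j$ on $\Sigma_0$. Following Isakov's argument, I would then show that $W=|\nabla U_1|^2-|\nabla U_2|^2$ vanishing on the hypersurface $\Sigma_0$ propagates to $\mathbf{R}^d\setminus\overline{\Omega_0}$. This uses the analyticity of $\partial\Omega_0$ together with a Cauchy--Kovalevskaya type uniqueness for the harmonic functions, where the fields are continued along rays leaving $\Omega_0$, which convexity permits.

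At infinity, comparing the asymptotics then gives $M_1=M_2$. With the determination of the gradient this yields $\nabla U_1=\nabla U_2$ near infinity, and analytic continuation extends this to all of $\mathbf{R}^d\setminus\overline{\Omega}$. Integrating and using the normalization at infinity from the far-field condition gives $U_1=U_2$ in $\mathbf{R}^d\setminus\overline{\Omega}$. In practice I would cite this reduction from \cite{isa90} rather than redo it, since it is the technically hardest part and is where analyticity of $\partial\Omega_0$ is essential.

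\textbf{Step 2: orthogonality identity.} Let $U=U_1-U_2$. Then $U$ vanishes outside $\Omega$, and $-\Delta U=\gamma f(\chi_{D_1}-\chi_{D_2})$. Integrating against any harmonic $h$ in a neighborhood of $\overline{\Omega}$ and applying Green's formula gives
\begin{equation*}
\int_{D_1} f h\,\mathrm{d}\mathbf{x}=\int_{D_2} f h\,\mathrm{d}\mathbf{x}.
\end{equation*}
Case by case, the task is to choose $h$ so that this identity forces $D_1=D_2$.

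\textbf{Step 3: the four cases.}

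\emph{Cases (1) and (2), $f$ constant.} Here I follow Novikov's argument. Suppose $D_1\neq D_2$ and look at the sets $D_1\setminus D_2$ and $D_2\setminus D_1$. For star-shaped domains sharing their center of gravity (equal first moments, from $h=x_i$), take the test function $h=\mathbf{x}\cdot\nabla V$, where $V$ is built from $U$ via Newtonian potentials of $D_1$ and $D_2$. This combination is harmonic off $\overline{D_1\cup D_2}$. Evaluating at an extremal point of the radial functions produces a strict sign on $\Delta(\mathbf{x}\cdot\nabla U)$ over $D_1\triangle D_2$, which gives a contradiction through the maximum principle applied to $\mathbf{x}\cdot\nabla U$, a function that vanishes outside $\Omega$. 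Case (2) is the same argument with $\partial_{x_d}U$ in place of $\mathbf{x}\cdot\nabla U$: convexity in $x_d$ makes $D_1\triangle D_2$ split into pieces on which $\partial_{x_d}U$ has a definite sign.

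\emph{Case (3).} Since $f$ is independent of $x_d$, $\partial_{x_d}$ commutes with multiplication by $f$. The function $\partial_{x_d}U$ satisfies $-\Delta\partial_{x_d}U=\gamma f\,\partial_{x_d}(\chi_{D_1}-\chi_{D_2})$, a signed single-layer distribution on the upper and lower graphs. The condition $\Omega\subset\operatorname{supp}f$ rules out degenerate cancellation, and the same maximum principle contradiction goes through.

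\emph{Case (4).} Convexity together with $f>0$ allows a supporting-hyperplane argument. If $D_1\not\subset D_2$, choose an exponential harmonic function $h=e^{\tau(\omega\cdot\mathbf{x})+i\tau(\omega^\perp\cdot\mathbf{x})}$, with $\omega$ pointing to an extreme point of $D_1$ outside $\overline{D_2}$. Letting $\tau\to\infty$, the left side of the identity dominates the right side exponentially, which is a contradiction.

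\textbf{Main obstacle.} I expect Step 1, converting modulus-only data on a partial hypersurface into full exterior potential equality, to be the hardest step; after that, cases (1)--(4) are the classical Novikov and Isakov uniqueness results.
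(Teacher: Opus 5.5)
The paper gives no proof of this theorem. It quotes it from Isakov's monograph and later works, so reducing to the classical exterior-potential uniqueness results and then citing them matches the source.

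\textbf{Step 1 has a genuine gap.} This is the only step where the specific hypotheses (modulus data, convex analytic $\partial\Omega_0$, $f\ge0$) enter, and the mechanism you sketch would fail. Vanishing of the nonlinear quantity $W=|\nabla U_1|^2-|\nabla U_2|^2$ on a hypersurface does not propagate off it. $W$ satisfies no equation with that property, and a real-analytic function can vanish on a hypersurface without vanishing nearby (a defining function of the hypersurface does exactly this). Cauchy--Kovalevskaya or Holmgren uniqueness requires full Cauchy data for a linear equation, which you do not have. Your later step from equal moduli and $M_1=M_2$ to $\nabla U_1=\nabla U_2$ is also asserted without argument.

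The missing idea is to linearize the data:
\[
W=\nabla V\cdot\nabla S,\qquad V=U_1-U_2,\quad S=U_1+U_2 .
\]
\emph{Role of analyticity.} It is needed only to extend $W=0$ from $\Sigma_0$ to the whole connected analytic surface $\partial\Omega_0$ (with $\overline\Omega\subset\Omega_0$).

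\emph{Role of convexity and $f\ge0$.} $S$ is the potential of $\rho_1+\rho_2\ge0$, supported in the convex set $\Omega_0$. Hence $\nu(\mathbf y)\cdot(\mathbf y-\mathbf x)>0$ for $\mathbf{x}$ in the support, so $\partial_\nu S<0$ on $\partial\Omega_0$ (if $\rho_1+\rho_2\not\equiv0$). Thus $V$ is harmonic in $E=\mathbf R^d\setminus\overline{\Omega_0}$ and satisfies the oblique condition $\nabla S\cdot\nabla V=0$ on $\partial\Omega_0$ with a transversal field.

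\emph{Maximum principle.} A nonconstant $V$ cannot attain an extremum over $\overline E$ at a finite point. Inside $E$ this is ruled out by the maximum principle. On $\partial\Omega_0$ the tangential gradient vanishes at such a point, and Hopf's lemma gives $\partial_\nu V\neq0$, so $\nabla S\cdot\nabla V=\partial_\nu S\,\partial_\nu V\neq0$, a contradiction.

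\emph{Conclusion.} In 2D, $V=-\frac{\gamma(M_1-M_2)}{2\pi}\ln|\mathbf x|+O(|\mathbf x|^{-1})$, so $M_1\neq M_2$ would force such an extremum; hence $M_1=M_2$. Then $V\to0$ at infinity in both dimensions, so $V\equiv0$ in $E$. Unique continuation carries $U_1=U_2$ into the connected set $\mathbf R^d\setminus\overline\Omega$.

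\textbf{Step 3: cite rather than sketch.} Your sketches for cases (1)--(4) also contain steps that fail.

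\emph{Case (4).} The oscillating factor $e^{i\tau\omega^\perp\cdot\mathbf x}$ destroys the claimed exponential domination. Take $f\equiv1$ and $D_1$ a ball centred at $\mathbf c$. The mean-value property gives $\int_{D_1}h=|D_1|\,h(\mathbf c)$, so $\bigl|\int_{D_1}h\bigr|=|D_1|e^{\tau\omega\cdot\mathbf c}$. Its growth rate is $\omega\cdot\mathbf c$, not $\sup_{D_1}\omega\cdot\mathbf x$, so nothing follows once $\sup_{D_2}\omega\cdot\mathbf x\ge\omega\cdot\mathbf c$.

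\emph{Case (1).} In the open pieces, $\Delta(\mathbf x\cdot\nabla U)=2\Delta U$, which equals $-2\gamma f$ on $D_1\setminus\overline{D_2}$ and $+2\gamma f$ on $D_2\setminus\overline{D_1}$. On top of this there are surface layers $\pm\gamma f(\mathbf x\cdot\nu)\,\mathrm{d}\mathcal S$ of opposite signs on $\partial D_1$ and $\partial D_2$. So there is no single strict sign on $D_1\triangle D_2$, and Novikov's argument is more delicate than your sketch suggests.
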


Theorem \ref{thm1} implies that the domain inverse problem of gravimetry admits uniqueness. With the density-contrast value $f(\mathbf{x})\ge0$ given a priori, the domain $D$ of the anomalous density can be uniquely determined under certain constraints. As noted in \cite{cheli24}, the domain inverse problem is a correctness class of the inverse gravimetry; we shall therefore study neural-network approaches for the domain inverse problems.

\section{Level-set physics-informed neural networks} \label{sec3}
\subsection{Level-set formulation for the domain inverse problem}
Directly representing the domain inverse problem through neural networks is challenging due to the discontinuous nature of interfaces. The evolution of interfaces can lead to vanishing or exploding gradients during training. Similar to \cite{cheli24}, we consider a level-set formulation for the domain inverse problem and use the neural network to express the continuous level-set function.

The volume-mass density function $\rho=f\chi_D$ is expressed as,
\begin{equation}
\rho(\mathbf{x})=f(\mathbf{x})\,H(\phi(\mathbf{x}))\,,
\end{equation}
where $\phi(\mathbf{x})$ is the level-set function and $H(\cdot)$ is the Heaviside function,
\begin{eqnarray}
\phi(\mathbf{x})=\left\{\begin{array}{ccc}\ge 0&,&\mathbf{x}\in\bar{D}\\ <0&,&\mathbf{x}\in\bar{D}^c \end{array} \right., \quad H(s)=\left\{\begin{array}{ccc}1&,&s\ge0\\ 0&,&s<0 \end{array} \right.\,.
\end{eqnarray}
Ideally, the level-set function $\phi$ is maintained as a continuous signed-distance function to the boundary of the domain $D$,
\begin{equation} \label{eqn3.3}
\phi(\mathbf{x})=\left\{\begin{array}{ccc}\mathrm{dist}(\mathbf{x},\partial D)&,&\mathbf{x}\in\bar{D}\\ -\mathrm{dist}(\mathbf{x},\partial D)&,&\mathbf{x}\in\bar{D}^c \end{array} \right.,
\end{equation}
such that its zero level-set $\{\mathbf{x}\mid \phi(\mathbf{x})=0\}$ depicts $\partial D$.
We use a neural network $\phi_\omega(\mathbf{x})$ to express this continuous function, where $\omega$ denotes the trainable parameters. To mitigate vanishing or exploding gradients during training, a numerical Heaviside is considered,
\begin{equation}
H_\tau(\phi)=\frac{1}{2}\left(\tanh\frac{\phi}{\tau}+1\right)\,.
\end{equation}
This formulation can be viewed as applying an activation function to the output of $\phi_\omega(\mathbf{x})$. Consequently, the neural-network representation for the volume-mass density function becomes
\begin{equation} \label{eqn_density}
\rho_\omega(\mathbf{x})=f(\mathbf{x})\,H_\tau(\phi_\omega(\mathbf{x}))\,.
\end{equation}
Rather than following the end-to-end learning approach proposed in \cite{cheli24}, which typically lacks the integration of model-based knowledge, we adopt the paradigm of physics-informed neural networks (PINNs).

\subsection{Loss function of level-set PINNs}\label{subsec_loss}
Let $U_\theta(\mathbf{x})$ denote the neural-network representation of the gravitational potential, and $\rho_\omega(\mathbf{x})$ be the density function defined in equation (\ref{eqn_density}). We propose the following loss function for level-set PINNs:
\begin{equation}
\mathcal{L}_{total}=\mathcal{L}_{pde}+\lambda_1\mathcal{L}_{bc}+\lambda_2\mathcal{L}_{far}+\lambda_3\mathcal{L}_{eik}+\lambda_4\mathcal{L}_r+\lambda_5\mathcal{L}_{\Theta}\,,
\end{equation}
where
\begin{eqnarray}
\mathcal{L}_{pde}&=& \frac{1}{N_p} \sum_{i=1}^{N_p} \left| -\Delta U_{\theta}(\mathbf{x}_i^p) - \gamma \rho_{\omega}(\mathbf{x}_i^p) \right|^2\,,  \label{eqn3.7} \\
\mathcal{L}_{bc}&=& \frac{1}{N_b} \sum_{i=1}^{N_b} \left\| \nabla U_{\theta}(\mathbf{x}_i^b) - \mathbf{g}(\mathbf{x}_i^b) \right\|_2^2\,, \\
\mathcal{L}_{far}&=& \frac{1}{N_{far}} \sum_{i=1}^{N_{far}} \left\| \nabla U_{\theta}(\mathbf{x}_i^{far}) \right\|_2^2\,, \\
\mathcal{L}_{eik}&=& \frac{1}{N_p} \sum_{i=1}^{N_p} \left( \left\| \nabla \phi_{\omega}(\mathbf{x}_i^p) \right\|_2 - 1 \right)^2\,, \\
\mathcal{L}_r&=& \frac{1}{N_p} \sum_{i=1}^{N_p} \left\| \nabla \phi_{\omega}(\mathbf{x}_i^p) \right\|_2^2 \,, \\
\mathcal{L}_{\Theta}&=& \|\theta\|_2^2+\|\omega\|_2^2\,.
\end{eqnarray}
$\big\{\mathbf{x}_i^p\big\}_{i=1}^{N_p}$, $\big\{\mathbf{x}_i^b\big\}_{i=1}^{N_b}$, and $\big\{\mathbf{x}_i^{far}\big\}_{i=1}^{N_{far}}$ denote sets of collocation points defined, respectively, in the computational domain $\Omega$, on the measurement surface $\Sigma_0$, and at a distant boundary where the far-field condition (equation (\ref{eqn4})) is applied.

The loss term $\mathcal{L}_{pde}$ enforces the partial differential equation (\ref{eqn3}), while $\mathcal{L}_{bc}$ incorporates the measurement data. The term $\mathcal{L}_{far}$ accounts for the far-field boundary condition defined in equation (\ref{eqn4}); in both $\mathbf{R}^2$ and $\mathbf{R}^3$, the potential satisfies $\nabla U(\mathbf{x})\to\mathbf{0}$ as $|\mathbf{x}|\to\infty$. $\mathcal{L}_{eik}$ and $\mathcal{L}_r$ are two regularization terms for the level-set function. The level-set function is ideally a signed distance function as shown in equation (\ref{eqn3.3}), which satisfies the eikonal equation: $|\nabla\phi(\mathbf{x})|=1$; in addition, penalizing $\mathcal{L}_r$ tends to shrink the length or area of the interface, thereby preventing structural irregularities such as burrs or sharp corners and promoting stable shape evolution \cite{denzhalima25,liqia21}. Finally, $\mathcal{L}_{\Theta}$ is a standard regularization for the trainable parameters of neural networks.

The coefficients $\lambda_i$ ($i=1,\cdots,5$) are constant weights balancing the contribution of each loss term. Determining optimal values for these hyperparameters remains a central challenge in the PINNs paradigm. In our formulation, $\lambda_4$ and $\lambda_5$ can be set to very small values, as the regularization terms $\mathcal{L}_r$ and $\mathcal{L}_{\Theta}$ are basically optional. The selection of weights $\lambda_1$ through $\lambda_3$ follows a heuristic approach based on the magnitude of their corresponding loss terms during training iterations. If a specific loss term is excessively large, its weight is adjusted upward to impose a greater penalty during training. Concrete examples of these weight selections will be provided in the numerical results section.

\subsection{Interface-aware backpropagation}
We employ the typical Adam algorithm \cite{kinba14} to optimize the neural network parameters, where a critical step involves computing the gradients of the loss function:
\begin{eqnarray}
\frac{\partial\mathcal{L}_{total}}{\partial\theta}&=&\frac{\partial\mathcal{L}_{pde}}{\partial\theta}+\lambda_1\frac{\partial\mathcal{L}_{bc}}{\partial\theta}+\lambda_2\frac{\partial\mathcal{L}_{far}}{\partial\theta}+\lambda_5\frac{\partial\mathcal{L}_{\Theta}}{\partial\theta}\,, \\
\frac{\partial\mathcal{L}_{total}}{\partial\omega}&=&\frac{\partial\mathcal{L}_{pde}}{\partial\omega}+\lambda_3\frac{\partial\mathcal{L}_{eik}}{\partial\omega}+\lambda_4\frac{\partial\mathcal{L}_r}{\partial\omega}+\lambda_5\frac{\partial\mathcal{L}_{\Theta}}{\partial\omega}\,.
\end{eqnarray}
The majority of derivatives in the above formulation are accessible via automatic differentiation within the neural network framework. However, the derivative $\frac{\partial\mathcal{L}_{pde}}{\partial\omega}$ associated with interface evolution requires a specialized computational approach.

The chain rule formally implies that
\begin{equation} \label{eqn3.15}
\frac{\partial\mathcal{L}_{pde}}{\partial\omega}=\frac{\partial\mathcal{L}_{pde}}{\partial\rho_\omega}\cdot \frac{\partial\rho_\omega}{\partial\phi_\omega} \cdot\frac{\partial\phi_\omega}{\partial\omega}\,.
\end{equation}
According to (\ref{eqn_density}),
\begin{equation} \label{eqn3.16}
\frac{\partial\rho_\omega}{\partial\phi_\omega}=f(\mathbf{x})\,\delta_\tau(\phi_\omega(\mathbf{x}))\quad \mathrm{where}\quad \delta_\tau(s)=H'_\tau(s)=\frac{1}{2\tau}\operatorname{sech}^2\left(\frac{s}{\tau}\right)\,.
\end{equation}
Here, $\tau>0$ controls the numerical thickness of the interface region. 
\begin{remark} \label{prop1}
If $\tau>0$ is chosen too large, the level-set formulation (\ref{eqn_density}) fails to depict the sharp interface of the volume-mass density.
\end{remark}
\begin{remark} \label{prop2}
If $\tau>0$ is chosen too small, it leads to the problem of exploding or vanishing gradients.
\end{remark}
\noindent Remark \ref{prop1} is explicit. Let us explain Remark \ref{prop2}. Given that $\operatorname{sech}(x)=\frac{2}{e^x+e^{-x}}$, it follows that $\delta_\tau(0)\to\infty$ as $\tau\to0$, which induces exploding gradients at the zero level-set $\{\mathbf{x}\mid \phi_\omega(\mathbf{x})=0\}$. Concurrently,
\begin{equation} \label{eqn3.17}
\operatorname{lim}_{|\frac{\phi_\omega}{\tau}|\to\infty} \delta_\tau(\phi_\omega)=0\,.
\end{equation}
That is to say, the derivative for interface evolution is concentrated within the interface region $\{\mathbf{x}\mid\phi_\omega(\mathbf{x})\sim O(\tau) \}$. If $\tau>0$ is too small, very few collocation points $\big\{\mathbf{x}_i^p\big\}_{i=1}^{N_p}$ will fall within this narrow region, leading to vanishing gradients during training. In summary, Remark \ref{prop1} and Remark \ref{prop2} state that the selection of $\tau$ presents a dilemma.

Motivated by the mechanism of interface evolution in level-set methods, we propose the following strategy for calculating $\frac{\partial\mathcal{L}_{pde}}{\partial\omega}$. First, during forward propagation, a small value for $\tau>0$ is adopted in formulation (\ref{eqn_density}) to preserve a sharp interface in the density distribution. Conversely, during backpropagation, we redefine the derivative of the numerical Heaviside as:
\begin{equation} \label{eqn3.18}
H'_\tau(s):=\delta_{\tilde{\tau}}(s) \quad \mathrm{yielding}\quad \frac{\partial\rho_\omega}{\partial\phi_\omega}=f(\mathbf{x})\,\delta_{\tilde{\tau}}(\phi_\omega(\mathbf{x}))\,;
\end{equation}
by setting $\tilde{\tau}>\tau$, for example, $\tilde{\tau}=c\tau$ with $c>1$, it creates a broader support region to drive the interface evolution. 
This approach enables an interface-aware backpropagation mitigating the issues of exploding and vanishing gradients, while maintaining the compact shape and sharp interface of the volume-mass density.

\begin{lem} \label{lem1}
Given that $\tau>0$ and $c>1$, it holds that $\left| H_{c\tau}(s) - H_\tau(s) \right|<2 e^{-2|s|/c\tau}$, $\forall s\in\mathbf{R}$.
\end{lem}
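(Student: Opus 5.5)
We will prove the bound by reducing it to a single logistic function and then using one elementary tail estimate for each of the two terms separately. Both sides are even in $s$: $H_\tau(-s)=1-H_\tau(s)$, so $|H_{c\tau}(-s)-H_\tau(-s)|=|H_{c\tau}(s)-H_\tau(s)|$. Hence it suffices to treat $s\ge0$.

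The key identity is
\[
H_\tau(s)=\tfrac12\bigl(\tanh(s/\tau)+1\bigr)=\frac{1}{1+e^{-2s/\tau}},
\]
so that
\[
1-H_\tau(s)=\frac{e^{-2s/\tau}}{1+e^{-2s/\tau}}<e^{-2s/\tau}.
\]
For $s\ge0$ we write
\[
H_{c\tau}(s)-H_\tau(s)=\bigl(1-H_\tau(s)\bigr)-\bigl(1-H_{c\tau}(s)\bigr).
\]
Both bracketed quantities lie in $(0,\tfrac12]$, so the absolute value of their difference is at most their maximum. Since $c>1$ gives $2s/(c\tau)\le 2s/\tau$, each term is strictly less than $e^{-2s/c\tau}$. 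Therefore
\[
|H_{c\tau}(s)-H_\tau(s)|<e^{-2s/c\tau}\le 2e^{-2s/c\tau}.
\]
The factor $2$ in the statement is thus more than enough; the alternative route of bounding each term by the triangle inequality gives exactly $2e^{-2|s|/c\tau}$ and also suffices.

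The only step that needs any care is the strictness of the inequality at $s=0$. There the left-hand side equals $0$ and the right-hand side equals $2$, so the bound holds strictly. For $s>0$ strictness comes from $1+e^{-2s/\tau}>1$. There is no genuine obstacle beyond rewriting $\tanh$ in logistic form and using that $c>1$ lets the slower decay rate $2/(c\tau)$ dominate both tails.
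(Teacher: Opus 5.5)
Your proof is correct and follows essentially the same route as the paper: reduce to $s\ge 0$ by the symmetry $H_\tau(-s)=1-H_\tau(s)$, write $H_{c\tau}(s)-H_\tau(s)$ as a difference of the tails $1-H$, and bound each tail by $1-\tanh x=2/(e^{2x}+1)<2e^{-2x}$, which is your logistic form. The one deviation is that you bound the difference of the two positive tails by their maximum, whereas the paper uses the triangle inequality and bounds it by their sum; this gives you the sharper constant $1$ in place of $2$, and it also avoids the paper's final step $e^{-2s/c\tau}+e^{-2s/\tau}<2e^{-2s/c\tau}$, which is an equality at $s=0$ (there the strictness comes from the earlier step).
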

\begin{proof}
First, 
\[
0<1-\tanh(s)=\frac{2}{e^{2s}+1}< 2e^{-2s}\,, \ \ \forall s\ge0 \,.
\]
It implies that $\forall s\ge0$, $\tau>0$ and $c>1$,
\begin{eqnarray}
|H_{c\tau}(s) - H_\tau(s)| &=& \frac{1}{2} \left| \tanh\left(\frac{s}{c\tau}\right) - \tanh\left(\frac{s}{\tau}\right) \right|= \frac{1}{2} \left| \left(1 - \tanh\frac{s}{c\tau}\right) - \left(1 - \tanh\frac{s}{\tau}\right) \right| \nonumber \\
&\le&  \frac{1}{2} \left| 1 - \tanh\frac{s}{c\tau} \right| + \frac{1}{2} \left| 1 - \tanh\frac{s}{\tau} \right| \nonumber \\
&<& e^{-2s/c\tau} + e^{-2s/\tau} <2 e^{-2s/c\tau}\,. \label{lem1_1}
\end{eqnarray}
Then $\forall s<0$,
\begin{equation} \label{lem1_2}
|H_{c\tau}(s) - H_\tau(s)|=|H_{c\tau}(-s) - H_\tau(-s)|<2 e^{2s/c\tau}\,.
\end{equation}
Combining (\ref{lem1_1}) and (\ref{lem1_2}) completes the proof.
\end{proof}

\begin{proposition} \label{prop3}
$\mathcal{L}_{pde}$ is the PDE loss term defined in (\ref{eqn3.7}). Let $\tilde{\mathcal{L}}_{pde}$ denote the modified loss term obtained by replacing $\tau$ with $\tilde{\tau}=c\tau$ ($c>1$) in the formulation of $\rho_\omega$. It holds that
\begin{equation} \label{eqn3.21}
|\tilde{\mathcal{L}}_{pde}-\mathcal{L}_{pde}| \le \frac{\gamma M_1 M_2}{N_p} \sum_{i=1}^{N_p} \left| H_{c\tau}\big(\phi_\omega(\mathbf{x}_i^p)\big) - H_\tau\big(\phi_\omega(\mathbf{x}_i^p)\big) \right|\,,
\end{equation}
where
\[
M_1=\sup_i |f(\mathbf{x}_i^p)|\,,\quad M_2=\sup_i \left|-2\Delta U_{\theta}(\mathbf{x}_i^p) - \gamma \rho_{\omega}(\mathbf{x}_i^p;\tau) - \gamma \rho_{\omega}(\mathbf{x}_i^p;c\tau)\right|\,.
\]
In particular, if the collocation points $\mathbf{x}_i^p$ are uniformly sampled in the bounded domain $\Omega$, and $\phi_\omega$ satisfies the signed distance property (\ref{eqn3.3}) near the smooth interface $\partial D$, It follows that
\begin{equation}  \label{eqn3.22}
|\tilde{\mathcal{L}}_{pde} - \mathcal{L}_{pde}| \le \mathcal{K} \cdot \tau(c - 1) + O(\tau^3) \quad \mathrm{as}\ N_p\to\infty \,,
\end{equation}
where 
\[
\mathcal{K}=\gamma M_1 M_2\,\mathrm{ln}2\cdot \frac{\mathrm{Area}(\partial D)}{\mathrm{Volume}(\Omega)} \  \  \mathrm{is\ a\ constant\ independent\ of}\ \tau.
\]
\end{proposition}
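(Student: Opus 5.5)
The first inequality (\ref{eqn3.21}) follows from the factorization $a^2-b^2=(a-b)(a+b)$ applied at each collocation point. Write $r_i(\tau)=-\Delta U_\theta(\mathbf{x}_i^p)-\gamma\rho_\omega(\mathbf{x}_i^p;\tau)$. Since $U_\theta$ does not depend on $\tau$,
\[
r_i(c\tau)^2-r_i(\tau)^2=\bigl(r_i(c\tau)-r_i(\tau)\bigr)\bigl(r_i(c\tau)+r_i(\tau)\bigr),
\]
with $r_i(c\tau)-r_i(\tau)=-\gamma f(\mathbf{x}_i^p)\bigl(H_{c\tau}(\phi_\omega(\mathbf{x}_i^p))-H_\tau(\phi_\omega(\mathbf{x}_i^p))\bigr)$. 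The second factor is exactly the quantity whose supremum defines $M_2$. Taking absolute values, bounding $|f|\le M_1$ and the sum factor by $M_2$, and averaging over $i$ with the triangle inequality gives (\ref{eqn3.21}) directly.

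For (\ref{eqn3.22}), I let $N_p\to\infty$ with uniform sampling. The empirical average converges, by the law of large numbers or Riemann-sum convergence, to
\[
\frac{1}{\mathrm{Volume}(\Omega)}\int_\Omega \bigl|H_{c\tau}(\phi_\omega(\mathbf{x}))-H_\tau(\phi_\omega(\mathbf{x}))\bigr|\,d\mathbf{x}.
\]
I then split $\Omega$ into a tubular neighbourhood $T_\epsilon=\{|\phi_\omega|<\epsilon\}$ of $\partial D$ and its complement, where $\epsilon$ is fixed and less than the reach of the smooth interface, so that $\phi_\omega$ is the signed distance there.

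On the complement, Lemma~\ref{lem1} bounds the integrand by $2e^{-2\epsilon/c\tau}$. This is exponentially small in $1/\tau$ and is therefore absorbed into $O(\tau^3)$.

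On $T_\epsilon$, I use normal coordinates $\mathbf{x}=\mathbf{y}+s\,\mathbf{n}(\mathbf{y})$ with $\mathbf{y}\in\partial D$, so that $\phi_\omega=s$. The volume element is $J(\mathbf{y},s)\,dS\,ds$, where $J=1+s\kappa(\mathbf{y})+O(s^2)$ and $\kappa$ is the total curvature.

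The leading term is then $\mathrm{Area}(\partial D)\int_{-\epsilon}^{\epsilon}|H_{c\tau}(s)-H_\tau(s)|\,ds$. Because $\tanh$ is increasing, the two Heaviside functions are ordered: for $s>0$, $H_\tau\ge H_{c\tau}$, and the ordering reverses for $s<0$. The integrand is also even, so the absolute value can be removed on each half-line. Extending to $\mathbb{R}$ costs only an exponentially small error. Substituting $u=s/\tau$ gives
\[
2\cdot\tfrac12\int_0^\infty\bigl(\tanh(u/\tau)-\tanh(u/c\tau)\bigr)\,du=\tau\int_0^\infty\bigl(\tanh t-\tanh(t/c)\bigr)\,dt=\tau(c-1)\ln 2 .
\]
The last equality uses the standard identity $\int_0^\infty(\tanh(at)-\tanh(bt))\,dt=\ln 2\,(1/b-1/a)$ with $a=1$, $b=1/c$. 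This can be checked from $\int_0^R\tanh(at)\,dt=\tfrac1a\ln\cosh(aR)=R-\tfrac{\ln 2}{a}+o(1)$.

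For the correction terms, the term linear in $s$ in $J$ multiplies an even integrand, so it vanishes by oddness. The $O(s^2)$ term contributes $\int s^2|H_{c\tau}-H_\tau|\,ds=O(\tau^3)$ by the same scaling $s=\tau t$, since $t^2(\tanh t-\tanh(t/c))$ is integrable.

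Multiplying by $\gamma M_1M_2$ gives $\mathcal{K}\tau(c-1)+O(\tau^3)$ with the stated $\mathcal{K}$. I expect the main obstacle to be this tubular-coordinate step: justifying the Jacobian expansion and the cancellation of the odd term, which requires sufficient smoothness of $\partial D$. A related subtlety is that $M_2$ depends on the sample, so I would treat it as uniformly bounded as $N_p\to\infty$.
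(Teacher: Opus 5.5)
Your proposal is correct and follows essentially the same route as the paper. For (\ref{eqn3.21}) you use the same difference-of-squares factorization. For (\ref{eqn3.22}) you follow the same steps: the Riemann-sum limit, a near/far split with Lemma~\ref{lem1} on the far region, and the parallel-surface expansion near $\partial D$ with the $\ln\cosh$ evaluation giving $\tau(c-1)\ln 2$. The linear curvature term cancels by oddness, and the scaling $s=\tau t$ gives the $O(\tau^3)$ remainder.

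The differences are cosmetic. You fix the tube width $\epsilon$ below the reach rather than taking $\mu=\tau^{1/2}$, and you use normal coordinates with the Jacobian instead of the co-area formula with $\mathrm{Area}(\Gamma_s)$. You also only invoke integrability of $t^2(\tanh t-\tanh(t/c))$, where the paper evaluates the integral explicitly as $\frac{3}{8}(c^3-1)\zeta(3)$ via the Fermi--Dirac integral.
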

\begin{proof}
The estimate (\ref{eqn3.21}) is explicit:
\begin{equation*}
\begin{aligned}
|\tilde{\mathcal{L}}_{pde}-\mathcal{L}_{pde}| 
&= \frac{\gamma}{N_p} \left| \sum_{i=1}^{N_p} \left(-2\Delta U_{\theta}(\mathbf{x}_i^p) - \gamma \rho_{\omega}(\mathbf{x}_i^p;\tau) - \gamma \rho_{\omega}(\mathbf{x}_i^p;c\tau)\right)  \big(\rho_{\omega}(\mathbf{x}_i^p;c\tau) - \rho_{\omega}(\mathbf{x}_i^p; \tau) \big) \right| \\
&\le \frac{\gamma}{N_p}  \sum_{i=1}^{N_p} \left|-2\Delta U_{\theta}(\mathbf{x}_i^p) - \gamma \rho_{\omega}(\mathbf{x}_i^p;\tau) - \gamma \rho_{\omega}(\mathbf{x}_i^p;c\tau)\right| \cdot | f(\mathbf{x}_i^p) | \\
&\hspace{45pt} \cdot \left| H_{c\tau}\big(\phi_\omega(\mathbf{x}_i^p)\big) - H_\tau\big(\phi_\omega(\mathbf{x}_i^p)\big) \right| \\
&\le \frac{\gamma M_1 M_2}{N_p} \sum_{i=1}^{N_p} \left| H_{c\tau}\big(\phi_\omega(\mathbf{x}_i^p)\big) - H_\tau\big(\phi_\omega(\mathbf{x}_i^p)\big) \right|\,.
\end{aligned}
\end{equation*}

Next, given that $\mathbf{x}_i^p$ are uniformly sampled in $\Omega$ and $\phi_\omega$ satisfies the signed distance property, we show (\ref{eqn3.22}). As $N_p\to\infty$, the average sum becomes an integral:
\begin{equation*}
\frac{1}{N_p} \sum_{i=1}^{N_p} \left| H_{c\tau}\big(\phi_\omega(\mathbf{x}_i^p)\big) - H_\tau \big(\phi_\omega(\mathbf{x}_i^p)\big) \right|
\ \xrightarrow[N_p\to\infty]{} \frac{1}{\mathrm{Volume}(\Omega)} \int_{\Omega} \left| H_{c\tau}(\phi_\omega(\mathbf{x})) - H_\tau(\phi_\omega(\mathbf{x})) \right| \mathrm{d}\mathbf{x}\,.
\end{equation*}
Denote
\begin{equation}
I=\int_{\Omega} \left| H_{c\tau}(\phi_\omega(\mathbf{x})) - H_\tau(\phi_\omega(\mathbf{x})) \right| \mathrm{d}\mathbf{x}\,,
\end{equation}
and decompose $\Omega$ into a near-interface region and a far-field region: $\Omega=\Omega_\mu \bigcup \left(\Omega\!\setminus\!\Omega_\mu\right)$, where
\begin{equation}
\Omega_\mu=\{\mathbf{x}\in \Omega: |\phi_\omega(\mathbf{x})|<\mu\},\quad\mathrm{and}\ \ \mu=\tau^{1/2}\,.
\end{equation}
Then $I=I_1+I_2$, with
\begin{equation}
I_1=\int_{\Omega\setminus\Omega_\mu} \left| H_{c\tau}(\phi_\omega(\mathbf{x})) - H_\tau(\phi_\omega(\mathbf{x})) \right| \mathrm{d}\mathbf{x}\,,\quad  I_2=\int_{\Omega_\mu} \left| H_{c\tau}(\phi_\omega(\mathbf{x})) - H_\tau(\phi_\omega(\mathbf{x})) \right| \mathrm{d}\mathbf{x}\,.
\end{equation}
According to Lemma \ref{lem1},
\begin{equation}
\left| H_{c\tau}(\phi_\omega(\mathbf{x})) - H_\tau(\phi_\omega(\mathbf{x})) \right|<2 e^{-2 |\phi_\omega(\mathbf{x}) |/c\tau}\,.
\end{equation}
In $\Omega\setminus\Omega_\mu$, we have $|\phi_\omega(\mathbf{x})|\ge\mu$, so that
\begin{equation}
\left| H_{c\tau}(\phi_\omega(\mathbf{x})) - H_\tau(\phi_\omega(\mathbf{x})) \right|<2 e^{-2 \mu /c\tau}\,.
\end{equation}
It implies that
\begin{equation} \label{eqn3.28}
I_1\le 2 e^{-2 \mu /c\tau}\cdot \mathrm{Volume}(\Omega) = 2 e^{-2/(c\sqrt{\tau})}\cdot \mathrm{Volume}(\Omega) =o(\tau^n), \quad \forall n>0\,.
\end{equation}

In the near-interface region $\Omega_\mu$, $\phi_\omega$ satisfies the signed-distance property, therefore it is Lipschitz continuous and meets $|\nabla\phi_\omega(\mathbf{x})|=1$ almost everywhere. Then we can apply the co-area formula to $I_2$,
\begin{eqnarray}
I_2&=&\int_{\Omega_\mu} \left| H_{c\tau}(\phi_\omega(\mathbf{x})) - H_\tau(\phi_\omega(\mathbf{x})) \right|\cdot |\nabla\phi_\omega(\mathbf{x})| \mathrm{d}\mathbf{x} \nonumber \\
&=& \int_{-\mu}^{\mu} \left( \int_{\Gamma_s} \left| H_{c\tau}(\phi_\omega(\mathbf{x})) - H_\tau(\phi_\omega(\mathbf{x})) \right|  \mathrm{d}\mathcal{S} \right) \mathrm{d}s\,, \label{eqn3.29}
\end{eqnarray}
where $\Gamma_s = \{ \mathbf{x} \in \Omega : \phi_\omega(\mathbf{x}) = s \}$. Rearranging (\ref{eqn3.29}) yields
\begin{equation}\label{eqn3.30}
I_2= \int_{-\mu}^{\mu}\left| H_{c\tau}(s) - H_\tau(s) \right|  \left( \int_{\Gamma_s} 1 \mathrm{d} \mathcal{S} \right) \mathrm{d}s = \int_{-\mu}^{\mu} G(s)\, \mathrm{Area}(\Gamma_s) \,\mathrm{d}s\,.
\end{equation}
Here,  for simplicity of notation we denote 
\begin{equation}
G(s):=\left| H_{c\tau}(s) - H_\tau(s) \right|.
\end{equation}
In (\ref{eqn3.30}), we expand the area of the parallel surfaces $\Gamma_s$ near the smooth interface $\partial D= \{ \mathbf{x} \in \Omega : \phi_\omega(\mathbf{x}) = 0 \}$,
\begin{equation}\label{eqn3.32}
\mathrm{Area}(\Gamma_s) = \mathrm{Area}(\partial D) + s \int_{\partial D} \kappa(\mathbf{x})\, \mathrm{d}\mathcal{S} + O(s^2)\,,
\end{equation}
where $\kappa(\mathbf{x})$ is the mean curvature (or the standard curvature in $\mathbf{R}^2$). Substituting (\ref{eqn3.32}) into (\ref{eqn3.30}) leads to
\begin{equation}\label{eqn3.33}
I_2 = \int_{-\mu}^{\mu} G(s) \left( \mathrm{Area}(\partial D) + s \int_{\partial D} \kappa(\mathbf{x})\, \mathrm{d}\mathcal{S} + O(s^2) \right) \mathrm{d}s\,.
\end{equation}
(i) The 1st part of (\ref{eqn3.33}):
\begin{eqnarray}
I_{2,1}=\int_{-\mu}^{\mu} G(s) \,\mathrm{Area}(\partial D)\, \mathrm{d}s&=&\mathrm{Area}(\partial D)\cdot 2\int_0^{\mu}\left| H_{c\tau}(s) - H_\tau(s) \right| \, \mathrm{d}s \nonumber \\
&=& \mathrm{Area}(\partial D)\cdot \int_0^{\mu}\left( \tanh \frac{s}{\tau} - \tanh \frac{s}{c\tau} \right) \,\mathrm{d}s  \nonumber \\
 &= & \mathrm{Area}(\partial D)\cdot\left[ \tau \ln\left(\cosh \frac{s}{\tau}\right) - c\tau \ln\left(\cosh \frac{s}{c\tau}\right) \right]_0^\mu \nonumber \\
 &= & \mathrm{Area}(\partial D)\cdot\left[ \tau \ln\left(\cosh \frac{\mu}{\tau}\right) - c\tau \ln\left(\cosh \frac{\mu}{c\tau}\right) \right]\,.
\end{eqnarray}
Since
\begin{equation}
\ln\left(\cosh x \right)= x-\ln2+\ln\left(1+e^{-2x}\right)=x-\ln2+O\left(e^{-2x}\right)\,,\quad\mathrm{as}\ \ x\to+\infty\,,
\end{equation}
it holds that
\begin{eqnarray}
I_{2,1}=\int_{-\mu}^{\mu} G(s) \,\mathrm{Area}(\partial D)\, \mathrm{d}s &=& \mathrm{Area}(\partial D)\cdot \left[ (\mu - \tau \ln 2) - (\mu - c\tau \ln 2) + O\left(e^{-2\mu/c\tau}\right) \right] \nonumber \\
&=& \mathrm{Area}(\partial D)\cdot \tau(c-1) \ln 2 + O\left(e^{-2/(c\sqrt{\tau})}\right) \nonumber \\
&=& \mathrm{Area}(\partial D)\cdot \tau(c-1) \ln 2 + o(\tau^n)\,,\quad \forall n>0\,. \label{eqn3.36}
\end{eqnarray}
(ii) The 2nd part of (\ref{eqn3.33}):
\begin{equation} \label{eqn3.37}
I_{2,2}=\int_{-\mu}^{\mu} G(s) s \left( \int_{\partial D} \kappa(\mathbf{x})\, \mathrm{d}\mathcal{S} \right) \mathrm{d}s = \int_{\partial D} \kappa(\mathbf{x})\, \mathrm{d}\mathcal{S}\cdot \int_{-\mu}^{\mu} G(s) s \, \mathrm{d}s =0\,,
\end{equation}
given that the integrand $G(s) s$ is an odd function.

\noindent(iii) The residual part of (\ref{eqn3.33}):
\begin{equation}
I_{2,res}=\int_{-\mu}^{\mu} G(s)\, O(s^2) \, \mathrm{d}s = 2 \int_0^{\mu} G(s)\, O(s^2) \, \mathrm{d}s \le M_{res} \int_0^{\mu} \left( \tanh \frac{s}{\tau} - \tanh \frac{s}{c\tau}\right) s^2 \, \mathrm{d}s \,,
\end{equation}
where $M_{res}$ denotes a positive constant. Perform the change of variables $v = \frac{s}{\tau}$, and recall that $\mu=\tau^{1/2}$,
\begin{eqnarray}
I_{2,res}&\le& M_{res} \int_0^{\tau^{-1/2}} \left( \tanh v - \tanh \frac{v}{c}\right) \left(\tau^2 v^2\right) \, \tau\mathrm{d}v \nonumber \\
&=& \tau^3\cdot M_{res} \int_0^{\tau^{-1/2}} v^2 \left( \tanh v - \tanh \frac{v}{c}\right) \mathrm{d}v \nonumber \\
&\le& \tau^3\cdot M_{res} \int_0^{\infty} v^2 \left( \tanh v - \tanh \frac{v}{c}\right) \mathrm{d}v \,. \label{eqn3.39}
\end{eqnarray}
Denote
\begin{eqnarray}
I_\infty = \int_0^{\infty} v^2 \left( \tanh v - \tanh \frac{v}{c}\right) \mathrm{d}v&= &\int_0^{\infty} v^2 \left(  \left( 1 - \frac{2}{e^{2v} + 1} \right) - \left( 1 - \frac{2}{e^{2v/c} + 1} \right) \right) \mathrm{d}v \nonumber \\
&=& 2 \int_0^{\infty} v^2 \left(  \frac{1}{e^{2v/c} + 1} - \frac{1}{e^{2v} + 1} \right) \mathrm{d}v \,. \label{eqn3.40}
\end{eqnarray}
We can consider the Fermi-Dirac integral (see, e.g., formula 3.411 - 3 in \cite{graryz07}):
\begin{equation} \label{eqn3.41}
\int_0^\infty \frac{x^{n-1}}{e^{ax} + 1}\, \mathrm{d}x = \frac{1}{a^n}(1 - 2^{1-n}) \Gamma(n) \zeta(n), \qquad Re (a) > 0\,,\ \ Re (n) > 0\,,
\end{equation}
where $\Gamma(n)  = \int_0^\infty x^{n-1} e^{-x} \, \mathrm{d}x $ is the Gamma function, and $\zeta(n)$ is the Riemann Zeta function. We comment that $(1 - 2^{1-n}) \zeta(n)= \sum_{k=1}^\infty \frac{(-1)^{k-1}}{k^n}$ is the Dirichlet Eta function, which converges for all $Re(n) > 0$. 
Utilizing (\ref{eqn3.41}) in (\ref{eqn3.40}) leads to: 
\begin{equation} \label{eqn3.42}
I_\infty = 2  \left( \frac{ \frac{3}{4}\,\Gamma(3) \zeta(3)}{(2/c)^3}- \frac{ \frac{3}{4}\,\Gamma(3) \zeta(3)}{8}  \right) =  \frac{3}{8}(c^3 - 1)\, \zeta(3)\,,
\end{equation}
where $\Gamma(3)=2$, and $\zeta(3) \approx 1.202$ is Apéry's constant.

Substituting (\ref{eqn3.42}) into (\ref{eqn3.39}), we have that
\begin{equation} \label{eqn3.43}
I_{2,res} \le \tau^3\cdot M_{res}\, \frac{3}{8}(c^3 - 1)\, \zeta(3) =O(\tau^3)\,.
\end{equation}
Combining (\ref{eqn3.36}), (\ref{eqn3.37}) and (\ref{eqn3.43}) leads to
\begin{equation}\label{eqn3.44}
I_2= \mathrm{Area}(\partial D)\cdot \tau(c-1) \ln 2 + O(\tau^3)\,.
\end{equation}
Combining (\ref{eqn3.28}) and (\ref{eqn3.44}) yields that
\begin{equation}
I=I_1+I_2=\mathrm{Area}(\partial D)\cdot \tau(c-1) \ln 2 + O(\tau^3)\,.
\end{equation}
It completes the error estimate (\ref{eqn3.22}) by considering (\ref{eqn3.21}).
\end{proof}

In the interface-aware backpropagation, we redefine the derivative of $H_\tau$ according to equation (\ref{eqn3.18}), i.e., $H'_\tau(s):=\delta_{\tilde{\tau}}(s)$. Using this new derivative in an optimization algorithm (e.g., Adam) essentially minimizes the PDE loss term $\tilde{\mathcal{L}}_{pde}$. Proposition \ref{prop3} shows that $\tilde{\mathcal{L}}_{pde}$ closely approximates $\mathcal{L}_{pde}$ for small $\tau$. Therefore, minimizing $\tilde{\mathcal{L}}_{pde}$ effectively minimizes $\mathcal{L}_{pde}$ as well.


\subsection{Adaptive refinement of collocation points near interfaces}
As illustrated by equations (\ref{eqn3.15}) - (\ref{eqn3.17}), the update of $\omega$ via $\mathcal{L}_{pde}$ is primarily driven by collocation points within the interface region. Allocating more points in this region enables $\phi_\omega$ to better characterize the interface structure. Leveraging the mesh-free nature of the neural network representation and the fact that the interface is described by the zero level-set, we propose a simple strategy for adaptive refinement of collocation points near interfaces. One can optionally use it to improve the interface characterization.

The procedure begins by defining a static set of candidate points, $S$, within the computational domain $\Omega$; these may be generated, for example, via a uniform random distribution. During an iteration, points proximal to the interface are sampled from $S$ to form the subset
\begin{equation}
I_{\phi_\omega}=\{\mathbf{x}\in S: |\phi_\omega(\mathbf{x})|<C_{\mathrm{refine}}\},
\end{equation} 
where the hyper-parameter $C_{\mathrm{refine}}$ determines the width of the refinement zone. These interface points, $I_{\phi_\omega}$, are subsequently merged into the full set of collocation points for the domain, which we denote (with slight notation overlap) as $\big\{\mathbf{x}_i^p\big\}_{i=1}^{N_p}$. This adaptive refinement strategy directly influences the evaluation of $\mathcal{L}_{pde}$, $\mathcal{L}_{eik}$ and $\mathcal{L}_r$ in the total loss function.

\subsection{Choices of neural networks}
We study the choice of neural networks $\phi_\omega(\mathbf{x})$ and $U_\theta(\mathbf{x})$ for the level-set function $\phi(\mathbf{x})$ and the gravitational potential $U(\mathbf{x})$, respectively.
\begin{proposition} \label{prop4}
Let $\Omega\subset\mathbf{R}^n$ be a bounded domain (open set), and let $D\subset\Omega$ denote the domain of mass anomaly, which admits piecewise smooth boundaries. The signed-distance function $\phi(\mathbf{x})$ defined in (\ref{eqn3.3}) satisfies $\phi(\mathbf{x})\in W^{1,\infty}(\Omega)$.
\end{proposition}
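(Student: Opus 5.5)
The plan is to prove that $\phi$ is bounded and Lipschitz continuous on $\Omega$ with Lipschitz constant $1$, and then use the standard identification of $W^{1,\infty}$ with Lipschitz functions on open sets. Rademacher's theorem gives differentiability almost everywhere with $|\nabla\phi|\le 1$. The weak gradient coincides with the a.e. gradient, since Lipschitz functions are locally absolutely continuous on lines. So $\|\phi\|_{W^{1,\infty}(\Omega)}\le \sup_\Omega|\phi|+1$.

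\textbf{Boundedness.} For $\mathbf{x}\in\Omega$ we have $|\phi(\mathbf{x})|=\mathrm{dist}(\mathbf{x},\partial D)$. Since $\partial D\subset\overline{\Omega}$ is nonempty and $\Omega$ is bounded, this gives $|\phi(\mathbf{x})|\le \mathrm{diam}(\Omega)<\infty$.

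\textbf{Lipschitz bound.} Let $d(\mathbf{x}):=\mathrm{dist}(\mathbf{x},\partial D)$. The triangle inequality gives $|d(\mathbf{x})-d(\mathbf{y})|\le|\mathbf{x}-\mathbf{y}|$ for all $\mathbf{x},\mathbf{y}$. Now take $\mathbf{x},\mathbf{y}\in\Omega$.
\begin{itemize}
\item If both points lie in $\bar D$, or both lie in $\bar D^c$, then $|\phi(\mathbf{x})-\phi(\mathbf{y})|=|d(\mathbf{x})-d(\mathbf{y})|\le|\mathbf{x}-\mathbf{y}|$.
\item If $\mathbf{x}\in\bar D$ and $\mathbf{y}\in\bar D^c$, then $|\phi(\mathbf{x})-\phi(\mathbf{y})|=d(\mathbf{x})+d(\mathbf{y})$. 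The segment $[\mathbf{x},\mathbf{y}]$ is connected and meets both $\bar D$ and its complement, so it contains a point $\mathbf{z}\in\partial D$; for instance, take $\mathbf{z}$ at the supremum parameter $t$ for which the point lies in $\bar D$. Then $d(\mathbf{x})+d(\mathbf{y})\le|\mathbf{x}-\mathbf{z}|+|\mathbf{z}-\mathbf{y}|=|\mathbf{x}-\mathbf{y}|$.
\end{itemize}
The segment need not lie in $\Omega$ for this argument, since $\phi$ is defined on all of $\mathbf{R}^n$ by the same formula.

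\textbf{Main obstacle.} The delicate step is the sign convention at the boundary. The definition (\ref{eqn3.3}) assigns $\bar D$ the nonnegative sign, and $\phi=0$ on $\partial D$, so there is no conflict. The real care is in the mixed case above, where the crossing point $\mathbf{z}$ must genuinely lie in $\partial D$; this holds because $\bar D$ is closed, so the supremum point belongs to $\bar D$ and is a limit of points outside it.

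\textbf{Where smoothness enters.} The piecewise smoothness of $\partial D$ is not needed for the $W^{1,\infty}$ membership itself. It only ensures that the zero level set is a genuine interface, and that $|\nabla\phi|=1$ a.e. rather than merely $\le 1$. That refinement can be noted as a remark via the nearest-point projection, but it is not required for the statement.
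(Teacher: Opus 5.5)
Your argument is correct and complete, and it does more than the paper does. The paper gives no proof of Proposition~\ref{prop4}. It only says this is a typical property of signed-distance functions and notes that $\phi$ satisfies the eikonal equation $|\nabla\phi|=1$ almost everywhere.

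Read as a proof, that remark is not enough by itself. A bounded pointwise gradient almost everywhere does not produce a weak gradient; the Cantor function is the standard counterexample. The real content is the global $1$-Lipschitz bound, which you prove directly. Your segment-crossing argument for points on opposite sides of $\partial D$ is exactly where the signed distance differs from the unsigned one. Your closedness argument placing the crossing point in $\partial\bar D\subseteq\partial D$ is sound. Lipschitz continuity plus absolute continuity on lines then gives membership in $W^{1,\infty}(\Omega)$ without any regularity of $\partial D$, and you rightly observe that this is a strictly more general statement.

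What the paper's remark adds is the sharper identity $|\nabla\phi|=1$ almost everywhere, which motivates $\mathcal{L}_{eik}$. As you say, this is where the hypothesis on $\partial D$ enters, though the exact requirement is only that $\partial D$ be Lebesgue-null. Your nearest-point argument gives $|\nabla\phi|=1$ at every point of differentiability off $\partial D$, for an arbitrary closed boundary. On the other hand, $\nabla\phi=0$ almost everywhere on the level set $\{\phi=0\}=\partial D$. Piecewise smoothness is therefore just one convenient way to guarantee the eikonal identity, not something the $W^{1,\infty}$ claim depends on.
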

\begin{proposition} \label{prop5}
Let $\Omega_0$ be a bounded domain and $\Omega\subset\Omega_0$. Given that the density $\rho$ with $\operatorname{supp}\rho\subset\Omega$ satisfies $\rho\in L^\infty(\Omega)$, the gravity potential $U$ satisfies $U\in W^{1,\infty}(\Omega_0)$.
\end{proposition}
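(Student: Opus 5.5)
We need to prove Proposition \ref{prop5}: for bounded $\rho$ supported in $\Omega$, the Newtonian potential $U=\gamma\int K\rho$ lies in $W^{1,\infty}(\Omega_0)$. The plan is to work directly with the integral representation (\ref{eqn1}). We bound $U$ and its weak gradient pointwise on the bounded set $\Omega_0$ using only $\|\rho\|_{L^\infty(\Omega)}$ and the local integrability of $K$ and $\nabla K$.

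\textbf{Bounding $U$.} For $\mathbf{y}\in\Omega_0$ we have
\[
|U(\mathbf{y})|\le \gamma\|\rho\|_{L^\infty(\Omega)}\int_\Omega |K(\mathbf{y},\mathbf{x})|\,\mathrm{d}\mathbf{x}.
\]
Since $\Omega\subset\Omega_0$ and $\Omega_0$ is bounded, $|\mathbf{y}-\mathbf{x}|\le R:=\mathrm{diam}(\Omega_0)$ for all $\mathbf{x}\in\Omega$. Hence
\[
\int_\Omega|K(\mathbf{y},\mathbf{x})|\,\mathrm{d}\mathbf{x}\le\int_{B_R(0)}|K(|\mathbf{z}|)|\,\mathrm{d}\mathbf{z}.
\]
This integral is finite and independent of $\mathbf{y}$. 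In $\mathbf{R}^3$ it equals $\int_0^R r\,\mathrm{d}r$ up to constants. In $\mathbf{R}^2$ it equals $\frac1{2\pi}\int_0^R |\ln r|\, 2\pi r\,\mathrm{d}r$, which is finite. So $U\in L^\infty(\Omega_0)$.

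\textbf{Bounding $\nabla U$.} Define the candidate gradient
\[
\mathbf{V}(\mathbf{y})=\gamma\int_\Omega \nabla_{\mathbf{y}}K(\mathbf{y},\mathbf{x})\rho(\mathbf{x})\,\mathrm{d}\mathbf{x},\qquad |\nabla_{\mathbf{y}}K|=\frac{c_d}{|\mathbf{y}-\mathbf{x}|^{d-1}}.
\]
Because the kernel singularity has order $d-1<d$, the same ball argument gives
\[
|\mathbf{V}(\mathbf{y})|\le \gamma\|\rho\|_\infty c_d\,|S^{d-1}|\,R,
\]
uniformly in $\mathbf{y}\in\Omega_0$.

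\textbf{Identifying $\mathbf{V}$ as the weak gradient.} This is the main technical step. I would use a cutoff regularization. Let $K_\epsilon$ be a smooth radial kernel that agrees with $K$ for $|\mathbf{z}|\ge\epsilon$ and satisfies $|\nabla K_\epsilon|\le|\nabla K|$; for instance, replace $|\mathbf{z}|$ by $\sqrt{|\mathbf{z}|^2+\epsilon^2}$. The regularized potential $U_\epsilon=\gamma K_\epsilon*\rho$ is $C^1$, and differentiation under the integral gives $\nabla U_\epsilon=\gamma\nabla K_\epsilon*\rho$. The differences satisfy
\[
\|U_\epsilon-U\|_{L^\infty}+\|\nabla U_\epsilon-\mathbf{V}\|_{L^\infty}\le C\|\rho\|_\infty\int_{B_{C\epsilon}}\big(|K-K_\epsilon|+|\nabla K-\nabla K_\epsilon|\big)\to0,
\]
by the local integrability of $K$ and $|\mathbf{z}|^{1-d}$. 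Now take a test function $\varphi\in C_c^\infty(\Omega_0)$. Integration by parts gives
\[
\int U_\epsilon\,\partial_j\varphi=-\int\partial_jU_\epsilon\,\varphi,
\]
and passing to the limit yields $\partial_jU=V_j$ weakly. (Alternatively, one can cite the classical fact that the Newtonian potential of a bounded, compactly supported density is $C^{1}$, indeed $C^{1,\alpha}$, with gradient given by $\mathbf{V}$; see Gilbarg--Trudinger, Lemma 4.1.)

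\textbf{Conclusion.} Combining the two uniform bounds gives
\[
\|U\|_{W^{1,\infty}(\Omega_0)}\le C(\Omega_0,d,\gamma)\,\|\rho\|_{L^\infty(\Omega)},
\]
so $U\in W^{1,\infty}(\Omega_0)$. The only care needed is in the 2D logarithmic case, where $|K|$ is not small at large distance. This is harmless here because $\Omega_0$ is bounded, and that is exactly where the hypothesis on $\Omega_0$ is used.
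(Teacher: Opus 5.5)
Your proof is correct and follows the paper's argument. Like the paper, you bound $|U(\mathbf{y})|$ and $|\nabla U(\mathbf{y})|$ uniformly on $\Omega_0$ by $\gamma\|\rho\|_{L^\infty(\Omega)}$ times the integrals of $|K|$ and $|\nabla K|$ over a ball containing $\mathbf{y}-\Omega$; the paper uses $B(\mathbf{0},2r_0)$ with explicit constants, you use radius $\mathrm{diam}(\Omega_0)$. Your extra step identifying $\gamma\nabla K*\rho$ as the weak gradient is something the paper takes for granted, and it is a welcome addition. One small slip: the example $\sqrt{|\mathbf{z}|^2+\epsilon^2}$ does not agree with $K$ outside $B_\epsilon$, so for that choice the convergence must come from dominated convergence on $B_R$ rather than from localizing to $B_{C\epsilon}$. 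Alternatively, rely on the Gilbarg--Trudinger Lemma 4.1 citation.
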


Proposition \ref{prop4} is a typical property of signed-distance functions. In fact, $\phi(\mathbf{x})$ satisfies the eikonal equation $|\nabla\phi(\mathbf{x})|=1$ almost everywhere in $\Omega$. 

We provide a brief proof of Proposition \ref{prop5}.
\begin{proof}
(Proposition \ref{prop5}) According to (\ref{eqn1}),
\[
|U(\mathbf{y})| \leq \gamma \|\rho\|_{L^\infty(\Omega)} \int_\Omega |K(\mathbf{y}, \mathbf{x})| \, \mathrm{d}\mathbf{x},\quad  |\nabla U(\mathbf{y})| \leq \gamma \|\rho\|_{L^\infty(\Omega)} \int_\Omega |\nabla_{\mathbf{y}}K(\mathbf{y}, \mathbf{x})| \, \mathrm{d}\mathbf{x}.
\]
Since $\Omega_0$ is bounded, there exists $r_0>\frac{1}{2}$ such that $\Omega\subset\Omega_0\subset B(\mathbf{0}, r_0)$ (a ball of radius $r_0$).

(1) In $\mathbf{R}^2$, $K(\mathbf{y},\mathbf{x})=-\frac{1}{2\pi}\mathrm{ln}|\mathbf{y}-\mathbf{x}|$ and $\nabla_{\mathbf{y}}K(\mathbf{y}, \mathbf{x})=-\frac{1}{2\pi}\frac{\mathbf{y}-\mathbf{x}}{|\mathbf{y}-\mathbf{x}|^2}$. It follows that $\forall\mathbf{y}\in\Omega_0$,
\[
|U(\mathbf{y})| \leq \gamma \|\rho\|_{L^\infty(\Omega)} \int_{|\mathbf{z}| \le 2 r_0} \frac{1}{2\pi}\big|\mathrm{ln}|\mathbf{z}|\big| \, \mathrm{d}\mathbf{z} = \gamma \|\rho\|_{L^\infty(\Omega)} \int_0^{2 r_0}|\mathrm{ln}r|r\,\mathrm{d}r\,.
\]
Since $2 r_0>1$, $\int_0^{2 r_0}|\mathrm{ln}r|r\,\mathrm{d}r=2r_0^2\mathrm{ln}(2r_0)-r_0^2+\frac{1}{2}$, which implies that
$
|U(\mathbf{y})| \leq \gamma \|\rho\|_{L^\infty(\Omega)}\left(2 r_0^2\mathrm{ln}(2 r_0)-r_0^2+\frac{1}{2}\right)\,.
$
Similarly, $\forall\mathbf{y}\in\Omega_0$,
\[
 |\nabla U(\mathbf{y})| \leq \gamma \|\rho\|_{L^\infty(\Omega)} \int_{|\mathbf{z}| \le 2 r_0}\frac{1}{2\pi |\mathbf{z}|} \, \mathrm{d}\mathbf{z}=\gamma \|\rho\|_{L^\infty(\Omega)} \int_0^{2 r_0}\mathrm{d}r=2r_0\gamma\|\rho\|_{L^\infty(\Omega)}\,.
\]

(2) In $\mathbf{R}^3$, $K(\mathbf{y},\mathbf{x})=\frac{1}{4\pi|\mathbf{y}-\mathbf{x}|}$ and $\nabla_{\mathbf{y}}K(\mathbf{y}, \mathbf{x})=-\frac{1}{4\pi}\frac{\mathbf{y}-\mathbf{x}}{|\mathbf{y}-\mathbf{x}|^3}$. It follows that $\forall\mathbf{y}\in\Omega_0$,
\begin{align*}
|U(\mathbf{y})| \leq \gamma \|\rho\|_{L^\infty(\Omega)} \int_{|\mathbf{z}| \le 2 r_0} \frac{1}{4\pi |\mathbf{z}|}\, \mathrm{d}\mathbf{z} = \gamma \|\rho\|_{L^\infty(\Omega)} \int_0^{2r_0}r\,\mathrm{d}r=2r_0^2 \gamma \|\rho\|_{L^\infty(\Omega)}\,, \\
|\nabla U(\mathbf{y})| \leq \gamma \|\rho\|_{L^\infty(\Omega)} \int_{|\mathbf{z}| \le 2r_0} \frac{1}{4\pi |\mathbf{z}|^2}\, \mathrm{d}\mathbf{z} = \gamma \|\rho\|_{L^\infty(\Omega)} \int_0^{2r_0} \mathrm{d}r=2r_0 \gamma \|\rho\|_{L^\infty(\Omega)}\,. 
\end{align*}
\end{proof}

Propositions \ref{prop4} and \ref{prop5} suggest seeking neural network representations for $\phi(\mathbf{x})$ and $U(\mathbf{x})$ within the Sobolev spaces $W^{1,\infty}(\Omega)$ and $W^{1,\infty}(\Omega_0)$, respectively. To achieve this, we employ fully connected neural networks of the form 
\begin{equation} \label{eqn:fnn}
	\mathcal{N}= \mathcal{L}_L\circ\sigma\circ\mathcal{L}_{L-1}\circ \cdots \sigma\circ\mathcal{L}_l\circ \cdots \sigma \circ\mathcal{L}_1\,,
\end{equation}
where $\sigma$ denotes the activation function, $\mathcal{L}_l(\mathbf{z})=\mathbf{W}_l\mathbf{z}+\mathbf{b}_l$ denotes the affine mapping of the $l$-th layer, and $L$ represents the total number of layers. 

By convention, we name neural networks based on their activation functions. $\mathcal{N}$ is referred to as a ReLU neural network if $\sigma$ is the ReLU function, and a $\tanh$ neural network if $\sigma$ is the $\tanh$ function. The following results \cite{yar17, zhaluzha24} indicate that both ReLU and $\tanh$ networks are well-suited for approximating functions in $W^{1,\infty}$.

\subsubsection{Approximation by ReLU networks} \label{subsubsec3.5.1}
First, we invoke the upper-bound result from \cite{yar17} to establish the approximation capacity.
\begin{thm}\cite[Theorem~1]{yar17}\label{prop:relu_bounds_en}
	Let $F_{d,n}$ denote the unit ball of $W^{n,\infty}([0,1]^d)$:
	\[
		F_{d,n}=\bigl\{\varphi\in W^{n,\infty}([0,1]^d):\|\varphi\|_{W^{n,\infty}}\le1\bigr\}.
	\]
	Then, there exists a constant $c_{0}=c_{0}(d,n)>0$, such that $\forall\,\varepsilon\in(0,1)$ and $\forall\,\varphi\in F_{d,n}$, there exists a ReLU neural network $\mathcal{N}$ of the form (\ref{eqn:fnn}) with depth
	\begin{equation} \label{eqn:relu_depth}
		L\le c_{0}\bigl(\ln(1/\varepsilon)+1\bigr),
	\end{equation}
	and a total number of weights and computation units
	\begin{equation} \label{eqn:relu_weight_upper}
		W_{\mathrm{tot}}\le c_{0}\,\varepsilon^{-d/n}\bigl(\ln(1/\varepsilon)+1\bigr),
	\end{equation}
	such that
	\begin{equation} \label{eqn:relu_approx}
		\|\varphi-\mathcal{N}\|_{L^\infty([0,1]^d)}\le\varepsilon.
	\end{equation}
\end{thm}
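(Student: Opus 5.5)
The statement to prove is Yarotsky's upper bound (Theorem 1 in \cite{yar17}). I would follow his construction, which has three stages: localize with a partition of unity, approximate locally by Taylor polynomials, and implement multiplication efficiently with ReLU networks.

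\textbf{Step 1: partition of unity.} Fix an integer $N\sim\varepsilon^{-1/n}$. For each $\mathbf{m}\in\{0,\dots,N\}^d$ set
\[
\psi_{\mathbf{m}}(\mathbf{x})=\prod_{k=1}^d\psi\bigl(3N(x_k-m_k/N)\bigr),
\]
where $\psi$ is the trapezoid function that equals $1$ on $[-1,1]$, vanishes outside $[-2,2]$, and is linear in between. These functions sum to $1$ on $[0,1]^d$. Each $\psi_{\mathbf{m}}$ is supported in a cube of side $\sim 1/N$, and any given $\mathbf{x}$ lies in the support of at most $2^d$ of them.

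\textbf{Step 2: local polynomial approximation.} On each cube, let $P_{\mathbf{m}}$ be the degree-$(n-1)$ Taylor polynomial of $\varphi$ at $\mathbf{m}/N$. Since $\|\varphi\|_{W^{n,\infty}}\le1$, the Taylor remainder gives $|\varphi-P_{\mathbf{m}}|\lesssim N^{-n}$ on $\mathrm{supp}\,\psi_{\mathbf{m}}$. Hence
\[
\Bigl\|\varphi-\sum_{\mathbf{m}}\psi_{\mathbf{m}}P_{\mathbf{m}}\Bigr\|_\infty\le 2^d\,\frac{d^n}{n!}\,N^{-n}\le\varepsilon/2
\]
for a suitable constant in $N$. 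Expanding $P_{\mathbf{m}}$ in powers $(\mathbf{x}-\mathbf{m}/N)^{\mathbf{n}}$ writes this sum as a combination of $d^n(N+1)^d$ terms $a_{\mathbf{m},\mathbf{n}}\,\psi_{\mathbf{m}}(\mathbf{x})(\mathbf{x}-\mathbf{m}/N)^{\mathbf{n}}$, with coefficients satisfying $|a_{\mathbf{m},\mathbf{n}}|\le1$.

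\textbf{Step 3: ReLU multiplication.} Each term is a product of at most $d+n-1$ bounded factors, and each factor is exactly piecewise linear, hence exactly a ReLU network. The key lemma is the following.

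\begin{itemize}
\item The square $x^2$ on $[0,1]$ can be approximated to accuracy $\delta$ by a ReLU network of depth and size $O(\ln(1/\delta))$. The construction uses the sawtooth functions $g_s=g\circ\cdots\circ g$ built from the hat function $g$, via $x^2\approx x-\sum_{s\le m}g_s(x)/2^{2s}$, which has error $2^{-2m-2}$.
\item By polarization, $xy=\tfrac12\bigl((x+y)^2-x^2-y^2\bigr)$, so multiplication is approximated with the same complexity.
\item Chaining multiplications gives a $\delta$-approximate product of $d+n-1$ factors. The error accumulates only linearly, since the inputs stay bounded and the approximate product is Lipschitz. The approximate product should be built to vanish whenever one factor is zero, which keeps the localization intact.
\end{itemize}

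Take $\delta\sim\varepsilon$. Summing the $O(N^d)$ term networks in parallel gives total error at most $\varepsilon/2+2^d d^n\delta\le\varepsilon$. This is where the exact localization matters: only $2^d d^n$ terms are nonzero at any given point.

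\textbf{Complexity count.} The depth is $O(\ln(1/\varepsilon))$. The size is $O\bigl(N^d\ln(1/\varepsilon)\bigr)=O\bigl(\varepsilon^{-d/n}\ln(1/\varepsilon)\bigr)$, which matches \eqref{eqn:relu_depth}--\eqref{eqn:relu_weight_upper}.

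\textbf{Main obstacle.} I expect the hardest step to be the error-controlled composite multiplication in Step 3, namely showing that the sawtooth approximation of $x^2$ achieves exponential accuracy in depth. I would prove it by checking that $f_m$, defined as the piecewise linear interpolant of $x^2$ on the grid $2^{-m}\mathbb{Z}\cap[0,1]$, satisfies $f_{m-1}-f_m=g_m/2^{2m}$. This follows from the midpoint identity for parabolas.
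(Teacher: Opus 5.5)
Your proposal is correct and is essentially Yarotsky's own proof of \cite[Theorem~1]{yar17}: a trapezoidal partition of unity, local Taylor polynomials, sawtooth-based squaring plus polarization, and an approximate product that vanishes whenever a factor vanishes. That is the route the paper relies on, since it states this theorem by citation and gives no independent proof. The one detail to add for the strictly layered form \eqref{eqn:fnn} is that your construction implicitly uses skip connections: carrying $x$ and the running sums in the squaring network, carrying the outer factors along each multiplication chain, and padding all term networks to a common depth. These can be realized by identity units $v=\mathrm{ReLU}(v)-\mathrm{ReLU}(-v)$, which cost only $O(\ln(1/\varepsilon))$ extra units and nonzero weights per term, so \eqref{eqn:relu_depth}--\eqref{eqn:relu_weight_upper} are unaffected.
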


Since any function $\varphi\in W^{n,\infty}([0,1]^d)$ can be normalized as $\frac{\varphi}{C} \in F_{d,n}$ with $C := \|\varphi\|_{W^{n,\infty}([0,1]^d)}$,  Theorem~\ref{prop:relu_bounds_en} effectively establishes the approximation capacity of ReLU networks for $W^{n,\infty}$ functions. A natural converse question is: given a network $\mathcal{N}$ with a depth $L$ and a parameter budget $W_{\mathrm{tot}}$, what is the best guaranteed error? We address this by inverting the bounds \eqref{eqn:relu_depth}--\eqref{eqn:relu_weight_upper} to derive explicit error estimates in terms of $L$ and $W_{\mathrm{tot}}$.

\medskip

\noindent\textbf{Inversion of the depth bound.}
Inequality (\ref{eqn:relu_depth}), $L\le c_{0}(\ln(1/\varepsilon)+1)$, yields
\begin{equation}\label{eqn:eps_from_depth}
	\varepsilon \le e^{1 - L/c_{0}},
\end{equation}
indicating the guaranteed upper bound on the approximation error in terms of the network depth.

\medskip

\noindent\textbf{Inversion of the parameter bound.}
\begin{corollary}\label{cor:ReLu_approx}
The guaranteed upper bound on the approximation error $\varepsilon \in (0,1)$ of ReLU networks for $W^{n,\infty}$ functions is:
\begin{equation}\label{eqn:eps_from_W}
    \varepsilon \le \left[ \frac{n c_{0} \mathrm{W_{Lambert}}\left( \frac{W_{\mathrm{tot}} d}{n c_{0}} e^{d/n} \right)}{W_{\mathrm{tot}} d} \right]^{n/d},
\end{equation}
where $\mathrm{W_{Lambert}}(\cdot)$ denotes the Lambert $\mathrm{W}$ function \cite{corless1996lambert} defined by the relation
\begin{equation}\label{eqn:LambertW}
\mathrm{W_{Lambert}}(x)e^{\mathrm{W_{Lambert}}(x)}= x. 
\end{equation}
\end{corollary}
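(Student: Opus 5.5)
We invert the parameter bound (\ref{eqn:relu_weight_upper}) of Theorem~\ref{prop:relu_bounds_en}, treating it as an equality in the worst case and solving for $\varepsilon$. Set $t:=\ln(1/\varepsilon)+1>1$ for $\varepsilon\in(0,1)$. Then $\varepsilon^{-d/n}=e^{(d/n)(t-1)}$, and the constraint $W_{\mathrm{tot}}\le c_0\varepsilon^{-d/n}(\ln(1/\varepsilon)+1)$ becomes
\[
\frac{W_{\mathrm{tot}}}{c_0}\,e^{d/n}\le t\,e^{(d/n)t}.
\]
Multiplying both sides by $d/n$ and writing $u:=(d/n)t$ puts it in Lambert form:
\[
u\,e^{u}\ \ge\ \frac{W_{\mathrm{tot}}\, d}{n c_0}\,e^{d/n}=:x.
\]

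Since $u\mapsto ue^u$ is strictly increasing on $[0,\infty)$ and $u>0$, this is equivalent to $u\ge \mathrm{W_{Lambert}}(x)$, using the principal branch defined by (\ref{eqn:LambertW}) on $x>0$. The key algebraic identity is $e^{\mathrm{W_{Lambert}}(x)}=x/\mathrm{W_{Lambert}}(x)$. Because $u=(d/n)(\ln(1/\varepsilon)+1)$, we have
\[
\varepsilon=e^{1-(n/d)u}=\bigl(e^{u}\bigr)^{-n/d}e,
\]
and this is monotone decreasing in $u$.

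The step needing the most care is matching the stated formula exactly, because a stray factor $e$ must cancel. Substituting $e^{u}=x/u$ at the extremal value $u=\mathrm{W_{Lambert}}(x)$ gives
\[
\varepsilon=e\,\Bigl[\frac{\mathrm{W_{Lambert}}(x)}{x}\Bigr]^{n/d}
= e\,\Bigl[\frac{n c_0\,\mathrm{W_{Lambert}}(x)}{W_{\mathrm{tot}}\, d}\Bigr]^{n/d}e^{-1}.
\]
The last equality holds because $x$ carries the factor $e^{d/n}$, which raised to the power $-n/d$ gives $e^{-1}$. The two factors of $e$ therefore cancel, and we obtain exactly (\ref{eqn:eps_from_W}).

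Finally, we interpret the direction of the inequality: a network with the given budget $W_{\mathrm{tot}}$ attains any accuracy $\varepsilon$ whose required budget does not exceed $W_{\mathrm{tot}}$. Since the required budget is increasing as $\varepsilon$ decreases, the best guaranteed accuracy is the $\varepsilon$ at equality, and any admissible $\varepsilon$ satisfies the stated bound. The main points to verify are positivity of $x$, so the principal branch applies, and monotonicity of $ue^u$ for $u>0$.
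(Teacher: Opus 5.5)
Your proof is correct and is the paper's argument in logarithmic variables: your $u=\frac{d}{n}\bigl(\ln(1/\varepsilon)+1\bigr)$ is the logarithm of the paper's comparison quantity $(1/\varepsilon)^{d/n}e^{d/n}$, so your $ue^{u}\ge x$ is exactly its inequality $X\le Y\ln Y$ at $Y=(1/\varepsilon)^{d/n}e^{d/n}$ (with $X=x$), and your step $u\ge \mathrm{W_{Lambert}}(x)$ is its step $e^{\mathrm{W_{Lambert}}(X)}\le (1/\varepsilon)^{d/n}e^{d/n}$, obtained by the same monotonicity argument. The one slip is in your closing interpretation: the $\varepsilon$ whose required budget is at most $W_{\mathrm{tot}}$ actually satisfy the \emph{reverse} inequality, whereas the stated bound, like the paper's, is simply the inversion of $W_{\mathrm{tot}}\le c_0\varepsilon^{-d/n}\bigl(\ln(1/\varepsilon)+1\bigr)$, which your algebra carries out correctly.
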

\begin{proof}
Starting from (\ref{eqn:relu_weight_upper}), $W_{\mathrm{tot}} \le c_{0}\,\varepsilon^{-d/n}(\ln(1/\varepsilon)+1)$ for $\varepsilon \in (0,1)$, we set $k = d/n$ and $u =(1/\varepsilon)^k$, so that the inequality becomes 
\[
W_{\mathrm{tot}} \le c_{0}u\left(\frac{1}{k}\ln u + 1\right)=c_{0}u \left(\frac{1}{k}\ln(ue^k)\right)\,.
\] 
It implies that
\[
\frac{kW_{\mathrm{tot}}}{c_{0}}\,e^k \;\le\; (ue^k)\ln(ue^k).
\]
Denoting $X:=\frac{kW_{\mathrm{tot}}}{c_{0}}\,e^k>0$, the equation $X=Y\ln Y$ (for $Y>1$ and so $X>0$) has a unique solution $Y = e^{\mathrm{W_{Lambert}}(X)}$, where $\mathrm{W_{Lambert}}$ is the Lambert $\mathrm{W}$ function \cite{corless1996lambert} defined by (\ref{eqn:LambertW}). Moreover, since $f(Y) = Y\ln Y$ is strictly increasing for $Y>1$, the inequality 
\[X=Y\ln Y\le (ue^k)\ln(ue^k)
\]
implies $Y\le ue^k$, viz., $Y=e^{\mathrm{W_{Lambert}}(X)}\le ue^k$. According to (\ref{eqn:LambertW}), $e^{\mathrm{W_{Lambert}}(X)}=\frac{X}{\mathrm{W_{Lambert}}(X)}$, and we have that
\[
\frac{X}{\mathrm{W_{Lambert}}(X)} \le ue^k,
\]
which means
\begin{equation}\label{eqn:3.55}
\frac{ \frac{W_{\mathrm{tot}} d}{n c_{0}} e^{d/n}}{\mathrm{W_{Lambert}}( \frac{W_{\mathrm{tot}} d}{n c_{0}} e^{d/n})} \le (1/\varepsilon)^{d/n}e^{d/n}\,.
\end{equation}
(\ref{eqn:3.55}) leads to the error estimate (\ref{eqn:eps_from_W}) immediately.
\end{proof}

\begin{remark} \label{WLambert}
For a large parameter budget $W_{\mathrm{tot}}$, $\mathrm{W_{Lambert}}(X)$ can be evaluated according to its asymptotic behavior \cite{corless1996lambert}:
\begin{equation} \label{eqn3.56}
\mathrm{W_{Lambert}}(X) \sim \ln X - \ln\ln X\,, \qquad \mathrm{as}\ \ X\to\infty\,.
\end{equation}
\end{remark}

\subsubsection{Approximation by $\tanh$ networks} \label{subsubsec3.5.2}
We adopt the result of \cite{zhaluzha24}, which bridges ReLU networks and networks activated by a broad class of activation functions including $\tanh$.

\begin{thm}\cite[Theorem~1]{zhaluzha24}\label{prop:tanh_bridge_en}
Let $\mathcal{N}_{\mathrm{ReLU}}:\mathbf{R}^d\to\mathbf{R}^{d'}$ be a ReLU network of the form (\ref{eqn:fnn}) with width $N$ and depth $L$, where $N, L, d, d' \in \mathbf{N}^+$. Then $\forall\varepsilon > 0$ and $\forall A > 0$, there exists a $\tanh$ network $\mathcal{N}_{\tanh}:\mathbf{R}^d\to\mathbf{R}^{d'}$ of the form (\ref{eqn:fnn}) with width $3N$ and depth $2L$ such that
	\begin{equation} \label{eqn:tanh_bridge}
		\left\| \mathcal{N}_{\tanh}(\mathbf{x}) - \mathcal{N}_{\mathrm{ReLU}}(\mathbf{x})\right\|_{L^\infty([-A,A]^d)} < \varepsilon\,.
	\end{equation}
\end{thm}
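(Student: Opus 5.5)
The plan is to replace the ReLU network one hidden layer at a time. Each scalar unit $\mathrm{ReLU}(\cdot)$ is swapped for a small $\tanh$ gadget of width $3$ and depth $2$. A stability (error-propagation) argument on compact sets then controls the accumulated error. Since $[-A,A]^d$ is compact and every affine map $\mathcal{L}_l$ is Lipschitz with some constant $\Lambda_l$, all pre-activations of $\mathcal{N}_{\mathrm{ReLU}}$ on $[-A,A]^d$ lie in a bounded box $[-B,B]^N$. Here $B$ depends only on $A$ and the weights. Everything below is done on slightly enlarged boxes $[-B-1,B+1]$, so that perturbed pre-activations stay in the region where the gadget is accurate.

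\textbf{Step 1 (the scalar gadget).} For every $\eta>0$ and $B>0$, I would construct a one-input map of the form
\[
g(x)=\mathbf{a}^\top\tanh\bigl(\mathbf{W}\tanh(\mathbf{w}x+\mathbf{b})+\mathbf{c}\bigr)+e, \qquad \mathbf{w},\mathbf{b},\mathbf{a},\mathbf{c}\in\mathbf{R}^3,
\]
with $\sup_{|x|\le B+1}|g(x)-\mathrm{ReLU}(x)|<\eta$. The construction uses two small-scale facts about $\tanh$:
\begin{itemize}
\item $\tanh(hx)/h\to x$ uniformly on compacts as $h\to0$, so a single neuron realizes an approximate identity.
\item $\tanh(x/\delta)\to\mathrm{sign}(x)$ away from $0$, with values bounded by $1$ everywhere.
\end{itemize}
Using $\mathrm{ReLU}(x)=\tfrac12(x+|x|)$, the first layer computes approximations of $x$ and of $\mathrm{sign}$-type features. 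The second layer must produce $|x|$, i.e.\ the product $x\cdot\mathrm{sign}(x)$, up to $\eta$. The boundedness of the sign surrogate keeps the error near $x=0$ of order $\delta$. I would try to realize this product with the remaining neurons, for example through a finite-difference or Taylor expansion of $\tanh$ around a point $a$ where $\tanh''(a)\neq0$. I regard this step as the main obstacle: fitting both the identity path and the nonlinearity into exactly three neurons across two layers is delicate. My first attempt would be to let one neuron per layer carry the near-identity $\tanh(h\cdot)/h$, while the other two produce $|x|$ via a smooth surrogate of $x\tanh(x/\delta)$, obtained from symmetric differences $\tanh(a+hx)-\tanh(a-hx)$ at suitable shifts.

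\textbf{Step 2 (assembly).} Apply the gadget coordinatewise to each of the $N$ ReLU units of a hidden layer, giving width $3N$ and two $\tanh$ layers per ReLU layer, hence depth $2L$. The inner affine maps of the gadget are absorbed into the neighbouring affine maps $\mathcal{L}_l$, and the output affine part $(\mathbf{a},e)$ is merged into $\mathcal{L}_{l+1}$. Compositions of affine maps are affine, so the result is again of the form (\ref{eqn:fnn}).

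\textbf{Step 3 (error propagation).} Let $z_l$ and $\tilde z_l$ denote the layer-$l$ outputs of the ReLU and $\tanh$ networks. Since ReLU is $1$-Lipschitz,
\[
|\tilde z_l-z_l|_\infty\le \Lambda_l\,|\tilde z_{l-1}-z_{l-1}|_\infty+\eta_l .
\]
Choose the tolerances $\eta_l$ backward from the output so that $\sum_l\eta_l\prod_{k>l}\Lambda_k<\varepsilon$ and each $\eta_l<1$. Induction then keeps all perturbed pre-activations inside $[-B-1,B+1]$, so the gadget bounds apply at every layer, and (\ref{eqn:tanh_bridge}) follows.
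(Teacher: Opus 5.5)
The paper does not prove this theorem; it imports it from \cite{zhaluzha24}. Your Steps 2 and 3 are the standard reduction, and the $(3,2)$ factors there come from this kind of unit-by-unit substitution. One bookkeeping fix is needed in Step 3: assume without loss of generality $\Lambda_k\ge1$ and $\varepsilon\le1$. Then the pre-activation perturbations at intermediate layers are dominated by the final error bound and stay below $1$.

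The genuine gap is Step 1. It carries the entire content of the theorem, and you leave it unresolved. The mechanism you sketch also fails as written. For a fixed shift $a$,
\[
\tanh(a+hx)-\tanh(a-hx)=2\tanh'(a)hx+\tfrac13\tanh'''(a)h^3x^3+\cdots
\]
is odd in $x$. So exactly the second-order term cancels, and that is the only term that can encode a product. No odd function approximates $|x|$ on $[-B,B]$ to better than $B$. You also do not address the real counting difficulty. Any second-order extraction comes with a first-order term that is larger by a factor $1/h$. Since the output map sees only the second-layer neurons (there is no skip connection), that term must be cancelled inside the same three neurons.

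The missing idea is a \emph{mixed} difference: a symmetric difference in one factor, taken around a point shifted by the other factor. Let the first layer carry just two neurons, $u=\tanh(h_0x)/h_0\approx x$ and $s=\tfrac12\bigl(1+\tanh(x/\delta)\bigr)\approx\mathbf{1}_{\{x>0\}}$. Fix $a\neq0$, so that $\tanh''(a)\neq0$. Then
\[
\tanh(a+h(u+s))-\tanh(a+h(u-s))=2\tanh'(a)hs+2\tanh''(a)h^2us+O(h^3).
\]
The third second-layer neuron removes the dominant linear term:
\begin{equation*}
\frac{\tanh\bigl(a+h(u+s)\bigr)-\tanh\bigl(a+h(u-s)\bigr)}{2\tanh''(a)\,h^2}-\frac{\tanh'(a)}{\tanh''(a)\,h\,h_1}\tanh(h_1 s)=us+O(h)+O\bigl(h_1^2/h\bigr).
\end{equation*}
This needs the scale separation $h_1^2\ll h$, for example $h_1=h$. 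Since $|\tanh t-t|\le|t|^3/3$ and $|x\,s-\mathrm{ReLU}(x)|\le\delta/(2e)$, you get $|us-\mathrm{ReLU}(x)|\le\tfrac13h_0^2(B+1)^3+\delta/(2e)$ on $[-B-1,B+1]$.

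This gives a depth-$2$ $\tanh$ gadget with widths $2$ and $3$, and every inner map is affine in the preceding layer. With it, your Steps 2 and 3 complete the proof.
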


\begin{remark} \label{neural_width}
The width of a neural network refers to the maximum number of neurons across its hidden layers. Specifically, consider a network $\mathcal{N}:\mathbf{R}^d\to\mathbf{R}^{d'}$ of the form (\ref{eqn:fnn}) with layers $\mathcal{L}_l(\mathbf{z})=\mathbf{W}_l\mathbf{z}+\mathbf{b}_l$, where $\mathbf{W}_l\in\mathbf{R}^{N_{l+1}\times{N_l}}$ and $\mathbf{b}_l\in\mathbf{R}^{N_{l+1}}$. It holds that $N_1=d$, $N_2,\cdots,N_L\in \mathbf{N}^+$, and $N_{l+1}=d'$. The width of the network is $N=\max\{N_2,\cdots,N_L\}$.
\end{remark}

Since the total parameter count of a fully connected network scales as $(\text{width})^2 \times \text{depth}$, the $(3,2)$ scaling in Theorem~\ref{prop:tanh_bridge_en} multiplies the parameter budget by $3^2 \times 2 = 18$. Then, combining Theorem \ref{prop:relu_bounds_en} and Theorem \ref{prop:tanh_bridge_en} immediately leads to the following result.

\begin{corollary}\label{prop:tanh_bounds_en}
	Let $F_{d,n}$ denote the unit ball of $W^{n,\infty}([0,1]^d)$:
	\[
		F_{d,n}=\bigl\{\varphi\in W^{n,\infty}([0,1]^d):\|\varphi\|_{W^{n,\infty}}\le1\bigr\}.
	\]
	Then, there exists a constant $c_{0}=c_{0}(d,n)>0$, such that $\forall\,\varepsilon\in(0,1)$ and $\forall\,\varphi\in F_{d,n}$, there exists a $\tanh$ neural network $\mathcal{N}$ of the form (\ref{eqn:fnn}) with depth
	\begin{equation} \label{eqn:tanh_depth}
		L\le 2c_{0}\bigl(\ln(2/\varepsilon)+1\bigr),
	\end{equation}
	and a total number of weights and computation units
	\begin{equation} \label{eqn:tanh_weight_upper}
		W_{\mathrm{tot}}\le 18c_{0}\,(\varepsilon/2)^{-d/n}\bigl(\ln(2/\varepsilon)+1\bigr),
	\end{equation}
	such that
	\begin{equation} \label{eqn:tanh_approx}
		\|\varphi-\mathcal{N}\|_{L^\infty([0,1]^d)}<\varepsilon.
	\end{equation}
\end{corollary}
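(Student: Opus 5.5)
The plan is to combine Theorem~\ref{prop:relu_bounds_en} and Theorem~\ref{prop:tanh_bridge_en} with a triangle inequality that splits the error budget $\varepsilon$ into two halves. Fix $\varepsilon\in(0,1)$ and $\varphi\in F_{d,n}$, and set $\varepsilon'=\varepsilon/2\in(0,1/2)$.

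\textbf{Step 1 (ReLU approximation).} Apply Theorem~\ref{prop:relu_bounds_en} with accuracy $\varepsilon'$. This gives a ReLU network $\mathcal{N}_{\mathrm{ReLU}}$ with
\[
\|\varphi-\mathcal{N}_{\mathrm{ReLU}}\|_{L^\infty([0,1]^d)}\le \varepsilon/2 .
\]
Its depth satisfies $L_{\mathrm{ReLU}}\le c_0(\ln(2/\varepsilon)+1)$. Its parameter count satisfies $W_{\mathrm{ReLU}}\le c_0(\varepsilon/2)^{-d/n}(\ln(2/\varepsilon)+1)$. Let $N$ denote its width.

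\textbf{Step 2 (tanh emulation).} Apply Theorem~\ref{prop:tanh_bridge_en} to $\mathcal{N}_{\mathrm{ReLU}}$ with $A=1$, so that $[0,1]^d\subset[-1,1]^d$, and with tolerance $\varepsilon/2$. This gives a $\tanh$ network $\mathcal{N}$ of width $3N$ and depth $2L_{\mathrm{ReLU}}$ with
\[
\|\mathcal{N}-\mathcal{N}_{\mathrm{ReLU}}\|_{L^\infty([0,1]^d)}<\varepsilon/2 .
\]
The triangle inequality then gives the strict bound $\|\varphi-\mathcal{N}\|_{L^\infty([0,1]^d)}<\varepsilon$, which is (\ref{eqn:tanh_approx}). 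The depth bound (\ref{eqn:tanh_depth}) follows directly from $L=2L_{\mathrm{ReLU}}\le 2c_0(\ln(2/\varepsilon)+1)$.

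\textbf{Step 3 (parameter count; the main obstacle).} The remaining task is to get the factor $18$ in (\ref{eqn:tanh_weight_upper}). Following the heuristic stated before the corollary, a fully connected network has $W_{\mathrm{tot}}\asymp N^2L$, so tripling the width and doubling the depth multiplies the count by $3^2\cdot 2=18$.

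Done rigorously, each layer of the $\tanh$ network has at most $(3N_{l+1})(3N_l)+3N_{l+1}\le 9(N_{l+1}N_l+N_{l+1})$ parameters. There are twice as many layers. Hence the total is at most $18$ times the maximal per-layer count times $L_{\mathrm{ReLU}}$.

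The difficulty is that the bound on $W_{\mathrm{ReLU}}$ in Theorem~\ref{prop:relu_bounds_en} counts the actual weights, not $N^2 L$. If the architecture of \cite{yar17} is sparse, the inequality $\max$-layer$\times L\le W_{\mathrm{ReLU}}$ can fail. Two remedies are available:
\begin{itemize}
\item \emph{Layerwise construction.} One can check that the construction in \cite{zhaluzha24} replaces each ReLU neuron by a bounded number of $\tanh$ neurons layerwise, so the count of \emph{nonzero} weights scales by at most $18$.
\item \emph{Absorbing into the constant.} Alternatively, one can state the result in the scaling sense adopted in the text and absorb any discrepancy into the generic constant $c_0(d,n)$, which the corollary only asserts exists.
\end{itemize}
I would take the second route, since $c_0$ is not specified: redefine $c_0$ if necessary as the maximum of the constant from Theorem~\ref{prop:relu_bounds_en} and the constant needed for the width-to-parameter conversion. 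With that choice, (\ref{eqn:tanh_weight_upper}) holds with $W_{\mathrm{tot}}\le 18\,c_0(\varepsilon/2)^{-d/n}(\ln(2/\varepsilon)+1)$.
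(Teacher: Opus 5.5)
Your Steps 1 and 2 coincide with the paper's argument. The paper simply says that combining Theorem~\ref{prop:relu_bounds_en} and Theorem~\ref{prop:tanh_bridge_en} gives the corollary, and your split $\varepsilon/2+\varepsilon/2$ with $A=1$ is exactly what produces $\ln(2/\varepsilon)$, $(\varepsilon/2)^{-d/n}$, the factor $2$ in the depth and the strict inequality.

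For the factor $18$, the paper offers only the heuristic that a fully connected network has about $(\text{width})^2\times\text{depth}$ parameters, and you are right that this is not a proof. However, the remedy you choose, absorbing the width-to-parameter conversion into $c_0(d,n)$, does not close the gap. As quoted, Theorem~\ref{prop:tanh_bridge_en} controls only the width and depth of the $\tanh$ network, and Theorem~\ref{prop:relu_bounds_en} gives no bound on $N^2L_{\mathrm{ReLU}}$ in terms of $W_{\mathrm{ReLU}}$. For the networks actually built in \cite{yar17} the two are far apart:
\begin{itemize}
\item the roughly $\varepsilon^{-d/n}$ local Taylor terms are computed by parallel branches, so the width satisfies $N\sim\varepsilon^{-d/n}$, while $L_{\mathrm{ReLU}}\sim\ln(1/\varepsilon)$;
\item hence $N^2L_{\mathrm{ReLU}}\sim\varepsilon^{-2d/n}\ln(1/\varepsilon)$, while $W_{\mathrm{ReLU}}\sim\varepsilon^{-d/n}\ln(1/\varepsilon)$.
\end{itemize}
The discrepancy grows like $\varepsilon^{-d/n}$, so no conversion constant depending only on $(d,n)$ exists, and redefining $c_0$ cannot absorb it.

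The argument that works is your first remedy. It requires the proof of \cite{zhaluzha24}, not just the statement quoted as Theorem~\ref{prop:tanh_bridge_en}. That construction is layerwise: each ReLU unit is replaced by a $\tanh$ subnetwork of fixed size approximating ReLU on a bounded interval, which is where the factors $3$ and $2$ come from. Hence every original connection and every original unit give rise to a bounded number of new connections, biases and units. The number of nonzero weights and computation units therefore grows by at most a universal factor $C$, and you need not verify $C\le 18$. Set $c_0:=c_0^{\mathrm{Y}}\max\{1,C/18\}$, where $c_0^{\mathrm{Y}}$ is the constant of Theorem~\ref{prop:relu_bounds_en}. This yields (\ref{eqn:tanh_weight_upper}), and since enlarging the constant only relaxes (\ref{eqn:tanh_depth}), the depth bound still holds. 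Your per-layer count $(3N_{l+1})(3N_l)+3N_{l+1}$ also presupposes this layerwise structure, because the quoted theorem fixes only the global width $3N$.
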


\medskip

\noindent\textbf{Inversion of the depth bound.}
From the inequality (\ref{eqn:tanh_depth}), we obtain that
\begin{equation} \label{eqn:tanh_eps_from_depth}
	\varepsilon \leq 2e^{1 - L/(2c_0)}\,.
\end{equation}

\medskip

\noindent\textbf{Inversion of the parameter bound.}
Following the analysis in Corollary \ref{cor:ReLu_approx}, we establish the following error bound in terms of $W_{\mathrm{tot}}$.

\begin{corollary}\label{cor:tanh_approx}
The guaranteed upper bound on the approximation error $\varepsilon \in (0,1)$ of $\tanh$ networks for $W^{n,\infty}$ functions is:
\begin{equation}\label{eqn:tanh_eps_from_W}
    \varepsilon \le 2\left[ \frac{18n c_{0} \mathrm{W_{Lambert}}\left( \frac{W_{\mathrm{tot}} d}{18n c_{0}} e^{d/n} \right)}{W_{\mathrm{tot}} d} \right]^{n/d},
\end{equation}
where $\mathrm{W_{Lambert}}(\cdot)$ denotes the Lambert $\mathrm{W}$ function.
\end{corollary}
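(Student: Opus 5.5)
The argument mirrors the proof of Corollary \ref{cor:ReLu_approx}, applied to the inflated budget bound (\ref{eqn:tanh_weight_upper}) of Corollary \ref{prop:tanh_bounds_en} in place of (\ref{eqn:relu_weight_upper}). The idea is to substitute $\varepsilon' := \varepsilon/2$. Then (\ref{eqn:tanh_weight_upper}) reads
\[
W_{\mathrm{tot}} \le 18c_0\,(\varepsilon')^{-d/n}\bigl(\ln(1/\varepsilon')+1\bigr),
\]
which is exactly (\ref{eqn:relu_weight_upper}) with $c_0$ replaced by $\tilde c_0 := 18c_0$ and $\varepsilon$ replaced by $\varepsilon'$. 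Since $\varepsilon\in(0,1)$, we have $\varepsilon'\in(0,1/2)\subset(0,1)$, so the hypothesis of the inversion argument is met.

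The key steps, in order, are as follows. First, set $k=d/n$ and $u=(1/\varepsilon')^k$, so that $u>1$. The inequality becomes
\[
\frac{kW_{\mathrm{tot}}}{\tilde c_0}e^k \le (ue^k)\ln(ue^k).
\]
Second, let $X:=\frac{kW_{\mathrm{tot}}}{\tilde c_0}e^k>0$. The map $Y\mapsto Y\ln Y$ is a strictly increasing bijection from $(1,\infty)$ onto $(0,\infty)$, so the unique $Y>1$ with $Y\ln Y = X$ is $Y=e^{\mathrm{W_{Lambert}}(X)}$. Monotonicity then gives $e^{\mathrm{W_{Lambert}}(X)}\le ue^k$, since $ue^k>1$. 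Third, use (\ref{eqn:LambertW}) to write $e^{\mathrm{W_{Lambert}}(X)}=X/\mathrm{W_{Lambert}}(X)$. This yields $X/\mathrm{W_{Lambert}}(X)\le (1/\varepsilon')^{d/n}e^{d/n}$. Cancelling $e^{d/n}$ and solving for $\varepsilon'$ gives
\[
\varepsilon' \le \left[\frac{18nc_0\,\mathrm{W_{Lambert}}\bigl(\frac{W_{\mathrm{tot}}d}{18nc_0}e^{d/n}\bigr)}{W_{\mathrm{tot}}d}\right]^{n/d}.
\]
Finally, multiplying by $2$ (since $\varepsilon=2\varepsilon'$) yields (\ref{eqn:tanh_eps_from_W}).

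There is essentially no hard step. The only points requiring care are bookkeeping. We must check that the rescaled variable $\varepsilon'$ lies in $(0,1)$, so that $\ln(1/\varepsilon')>0$ and $u>1$. We must also ensure $\mathrm{W_{Lambert}}(X)>0$, so that dividing by it preserves the inequality; this holds because $X>0$ on the principal branch. Lastly, the factor $18$ must be tracked consistently inside both the argument of $\mathrm{W_{Lambert}}$ and the prefactor.
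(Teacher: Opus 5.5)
Your proposal is correct and follows the paper's route: the paper only says to repeat the inversion argument of Corollary \ref{cor:ReLu_approx} on the bound (\ref{eqn:tanh_weight_upper}), and your substitution $\varepsilon'=\varepsilon/2$, $\tilde c_0=18c_0$ reduces the problem to exactly that case, with the factor $2$ restored at the end. Your side checks ($\varepsilon'\in(0,1)$, $ue^k>1$, and $\mathrm{W_{Lambert}}(X)>0$ for $X>0$) are the right ones, and you state them more explicitly than the paper does.
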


Again, for a large parameter budget $W_{\mathrm{tot}}$, the asymptotic expansion in (\ref{eqn3.56}) can be utilized to evaluate $\mathrm{W_{Lambert}}(X)$.

\subsubsection{Architectures of $\phi_\omega(\mathbf{x})$ and $U_\theta(\mathbf{x})$}

As established in Propositions~\ref{prop4} and~\ref{prop5}, both the level-set function $\phi$ and the gravitational potential $U$ possess $W^{1,\infty}$ regularity. The theoretical results in Sections~\ref{subsubsec3.5.1} and~\ref{subsubsec3.5.2} confirm that both ReLU and $\tanh$ networks have the capacity to approximate such functions.
In this work, we employ a ReLU network of the form (\ref{eqn:fnn}) for $\phi_\omega(\mathbf{\mathbf{x}})$, and a $\tanh$ network of the form (\ref{eqn:fnn}) for $U_\theta(\mathbf{\mathbf{x}})$.

In contemporary neural network practice, ReLU is typically preferred over the $\tanh$ activation function. First, unlike $\tanh$, ReLU does not saturate for large positive inputs, thereby mitigating the vanishing gradient problem. Second, the milder nonlinearity of ReLU networks facilitates more efficient optimization during training. Finally, our approximation results indicate that a $\tanh$ network requires greater width and depth to match the approximation accuracy of a ReLU network.  Consequently, we employ a ReLU network $\phi_\omega(\mathbf{x})$ to represent the level-set function $\phi(\mathbf{x})$.

For the gravitational potential network $U_\theta(\mathbf{x})$, however, the PDE loss $\mathcal{L}_{pde}$ involves the Laplacian $\Delta U_\theta$. Because ReLU is piecewise linear, its second derivative vanishes almost everywhere, rendering the Laplacian uninformative. To obtain a well-defined PDE loss via automatic differentiation, we instead employ the $\tanh$ activation function for $U_\theta(\mathbf{x})$.

\medskip

\noindent \textbf{Implementation details in numerical experiments.}

In the upcoming numerical experiments, both networks are parameterized with $L=6$ layers (5 hidden layers and 1 linear output layer), mapping a $d$-dimensional input $\mathbf{x} \in \mathbf{R}^d$ ($d=2,3$) to a scalar output. All hidden layers have a uniform width of 300 neurons.

\begin{itemize}
	\item \textbf{Architecture of $\phi_\omega$:}
	The activation is $\sigma = \mathrm{ReLU}$. The weight matrices have dimensions $\mathbf{W}_1 \in \mathbf{R}^{300 \times d}$, $\mathbf{W}_l \in \mathbf{R}^{300 \times 300}$ for $l=2,\dots,5$, and $\mathbf{W}_6 \in \mathbf{R}^{1 \times 300}$.
	
	\item \textbf{Architecture of $U_\theta$:}
	The activation is $\sigma = \tanh$. The architecture shares the same structure as $\phi_\omega$.
\end{itemize}

Both $\phi_\omega(\mathbf{x})$ and $U_\theta(\mathbf{x})$ share the same total number of trainable parameters (weights and biases), given by
\[
W_{\mathrm{tot}} = (300d+300) + 4 \times( 300^2 + 300) + (300 + 1) = 300d + 361801 \doteq 3.6 \times 10^5
\]
for $d=2,3$.

Then, from Equations (\ref{eqn:eps_from_depth}) and (\ref{eqn:eps_from_W}), we obtain an upper bound on the approximation error of the ReLU network $\phi_\omega$:
\[
\varepsilon_{\phi_\omega}=\max\left\{e^{1 - L/c_{0}}, \left[ \frac{c_{0} \mathrm{W_{Lambert}}\left( \frac{W_{\mathrm{tot}} d}{c_{0}} e^{d} \right)}{W_{\mathrm{tot}} d} \right]^{1/d}\right\}
\]
with $L=6,\,W_{\mathrm{tot}}\doteq3.6 \times 10^5$. Taking the constant $c_0=c_0(d,n)=1$, we obtain
\[
\varepsilon_{\phi_\omega}\doteq \max\left\{e^{-5}, \left[\frac{\mathrm{W_{Lambert}}\left(3.6 \times 10^5 d e^{d} \right)}{3.6 \times 10^5 d} \right]^{1/d}\right\}\,.
\]
It implies that
\begin{equation*}
\varepsilon_{\phi_\omega}\doteq \left\{
\begin{array}{lr}
\max\left\{e^{-5}, \left[\frac{\mathrm{W_{Lambert}}\left(5.32 \times 10^6 \right)}{7.2 \times 10^5} \right]^{1/2}\right\}\doteq\max\{0.0067,0.0042\}=0.0067\,,&d=2. \\
\max\left\{e^{-5}, \left[\frac{\mathrm{W_{Lambert}}\left(2.17 \times 10^7 \right)}{1.08 \times 10^6} \right]^{1/3}\right\}\doteq\max\{0.0067,0.0235\}=0.0235\,,&d=3.
\end{array}
\right.
\end{equation*}
Here, the asymptotic expansion in (\ref{eqn3.56}) is used to evaluate $\mathrm{W_{Lambert}}(X)$.

Similarly, from Equations (\ref{eqn:tanh_eps_from_depth}) and (\ref{eqn:tanh_eps_from_W}), we obtain an upper bound on the approximation error of the $\tanh$ network $U_\theta$:
\[
\varepsilon_{U_\theta}\doteq \max\left\{2e^{-2}, 2\left[\frac{18\mathrm{W_{Lambert}}\left(2 \times 10^4 d e^{d} \right)}{3.6 \times 10^5 d} \right]^{1/d}\right\}\,.
\]
It implies that
\begin{equation*}
\varepsilon_{U_\theta}\doteq \left\{
\begin{array}{lr}
\max\left\{2e^{-2}, 2\left[\frac{\mathrm{W_{Lambert}}\left(2.96 \times 10^5\right)}{4 \times 10^4} \right]^{1/2}\right\}\doteq\max\{0.27,0.032\}=0.27\,,&d=2. \\
\max\left\{2e^{-2}, 2\left[\frac{\mathrm{W_{Lambert}}\left(1.21\times 10^6 \right)}{6 \times 10^4} \right]^{1/3}\right\}\doteq\max\{0.27,0.11\}=0.27\,,&d=3.
\end{array}
\right.
\end{equation*}

In practice, both $\phi_\omega(\mathbf{x})$ and $U_\theta(\mathbf{x})$ can outperform the aforementioned theoretical upper bounds.

\section{Numerical results}
\label{sec:results}
We provide numerical examples in both 2D and 3D to illustrate the performance of level-set physics-informed neural networks.

\subsection{Network initialization via pre-training}

The gravity inverse problem is inherently ill-posed, and neural-network-based solvers exhibit pronounced sensitivity to initial parameterization. To mitigate these difficulties, we adopt a two-step pre-training strategy prior to the formal data-driven inversion: we first initialize the level-set network $\phi_{\omega}(\mathbf{x})$, and then initialize the gravitational potential network $U_{\theta}(\mathbf{x})$ based on the initialized level-set network.

\noindent \textbf{Step 1: Initialize $\phi_{\omega}(\mathbf{x})$.}
Let $\phi(\mathbf{x})$ denote the targeted initial guess of the level-set function. Then the network $\phi_{\omega}(\mathbf{x})$ is pre-trained by minimizing
\begin{equation}
	\mathcal{L}_{\Phi}(\omega) = \frac{1}{N_p} \sum_{i=1}^{N_p} \left|\phi_{\omega}(\mathbf{x}_i) - \phi(\mathbf{x}_i)\right|^2\,,
\end{equation}
where $\big\{\mathbf{x}_i^p\big\}_{i=1}^{N_p}$ denotes the set of collocation points in the computational domain $\Omega$.

For example, in our 2D experiment with $\Omega = (0,1)^2$, the targeted initial guess of the level-set function is set as
\begin{equation}
	\phi(\mathbf{x}) = 1 - \sqrt{(x-0.5)^2/0.35^2 + (z-0.5)^2/0.35^2}.
\end{equation}
Figure \ref{fig:pretrain_phi}\,(a) shows the output of the pre-trained $\phi_{\omega}(\mathbf{x})$.

For 3D experiments with $\Omega = (0,1)^3$, we set the targeted initial guess of the level-set function as
\begin{equation}
	\phi(\mathbf{x}) = 1 - \sqrt{(x-0.5)^2/0.35^2 + (y-0.5)^2/0.35^2 + (z-0.5)^2/0.35^2}.
\end{equation}
Figure \ref{fig:pretrain_phi}\,(b) shows the zero level set of the pre-trained $\phi_{\omega}(\mathbf{x})$.

\noindent \textbf{Step 2: Initialize $U_{\theta}(\mathbf{x})$.}
Subsequently, the potential network $U_{\theta}(\mathbf{x})$ is pre-trained by minimizing
\begin{equation}
	\mathcal{L}_{u}(\theta) = \frac{1}{N_p} \sum_{i=1}^{N_p} \left|-\Delta U_{\theta}(\mathbf{x}_i) - \gamma\rho_{\omega}(\mathbf{x}_i)\right|^2\,,
\end{equation}
where $\big\{\mathbf{x}_i^p\big\}_{i=1}^{N_p}$ denotes the set of collocation points in the computational domain $\Omega$. It enforces the governing PDE in a least-squares sense. Figure \ref{fig:pretrain_U} shows the output of the pre-trained $U_{\theta}(\mathbf{x})$ for both 2D and 3D experiments.

\begin{figure}[htbp]
	\centering
	\begin{subfigure}[b]{0.3\textwidth}
		\centering
		\includegraphics[width=\textwidth]{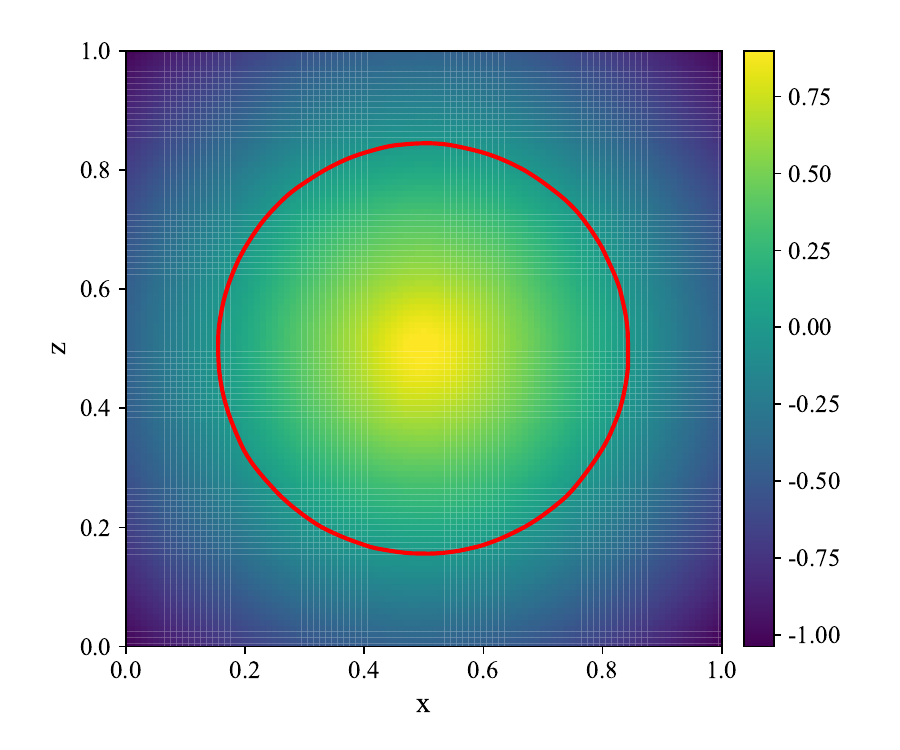}
		\caption{}
		\label{fig:pretrain_2d_phi}
	\end{subfigure}\hspace{0.02\textwidth}
	 \begin{subfigure}[b]{0.36\textwidth}
		\centering
		\includegraphics[width=\textwidth]{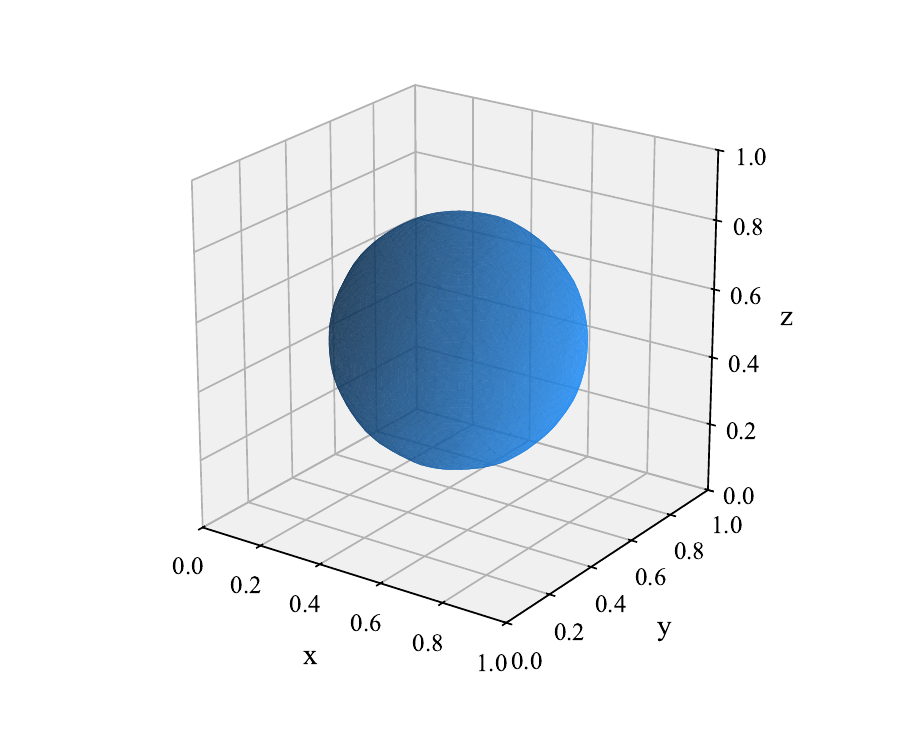}
		\caption{}
		\label{fig:pretrain_3d_phi}
	\end{subfigure}
	\caption{Initialize $\phi_{\omega}(\mathbf{x})$. (a) The output of the pre-trained $\phi_{\omega}(\mathbf{x})$ in our 2D experiment, where the red curve indicates its zero level set. (b) The output of the pre-trained $\phi_{\omega}(\mathbf{x})$ in our 3D experiment, where we plot its zero-level-set isosurface.}
	\label{fig:pretrain_phi}
\end{figure}

\begin{figure}[htbp]
	\centering
	\begin{subfigure}[b]{0.3\textwidth}
		\centering
		\includegraphics[width=\textwidth]{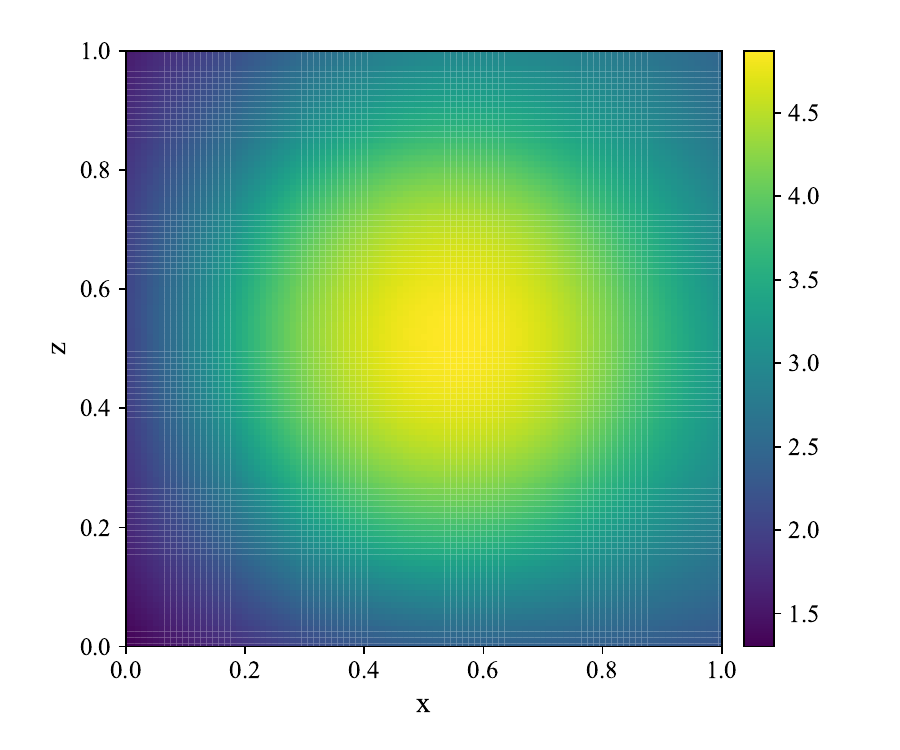}
		\caption{}
		\label{fig:pretrain_2d_u}
	\end{subfigure}
	\hspace{0.02\textwidth}
	\begin{subfigure}[b]{0.3\textwidth}
		\centering
		\includegraphics[width=\textwidth]{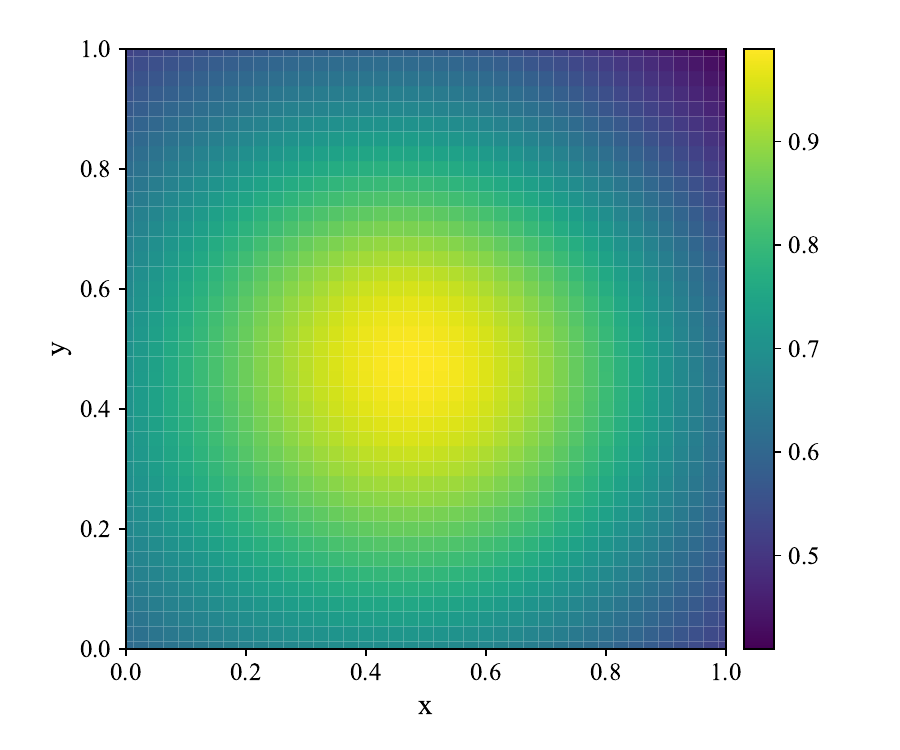}
		\caption{}
		\label{fig:pretrain_3d_u}
	\end{subfigure}
	\caption{Initialize $U_{\theta}(\mathbf{x})$. (a) The output of the pre-trained $U_{\theta}(\mathbf{x})$ in 2D experiment. (b) The output of the pre-trained $U_{\theta}(\mathbf{x})$ in 3D experiment, where we only display it at $z=0$.}
	\label{fig:pretrain_U}
\end{figure}

\subsection{2D example}


\subsubsection{Example 1: A circle and a parallelogram}

Figure \ref{fig:case1_density}\,(Left) shows the true model, where the density-contrast value $f=1\,\mathrm{g}/\mathrm{cm}^3$. The computational domain is $\Omega = (0, 1)^2$\,km, and the gravity data $\mathbf{g}=\nabla U$ are measured along $\Sigma_0=\partial\Omega$. There are $201$ measurements uniformly distributed on each of the four domain boundaries, yielding $N_b = 804$ boundary evaluations in total, where four corner points shared by adjacent boundaries are counted twice. The gravity data are simulated according to the integral equation,
\begin{equation}\label{data_simulation}
\nabla U(\mathbf{y};\rho)=\gamma\int_\Omega \nabla_{\mathbf{y}}K(\mathbf{y},\mathbf{x})\rho(\mathbf{x})\,\mathrm{d}\mathbf{x}\,,
\end{equation}
where the kernel function $K(\mathbf{y},\mathbf{x})$ is in the form of equation (\ref{eqn2}). The red line in Figure \ref{fig:case1_data} depicts the measured gravity data.

To train the level-set PINNs, $100^2$ collocation points are randomly sampled in the computational domain $\Omega$, which are fixed during the training after sampled. In addition, the adaptive refinement strategy described in Section~3.4 is employed to enhance the resolution near the evolving interface. A static pool of $50{,}000$ candidate points is uniformly sampled across the computational domain $\Omega$. During each training iteration, candidates falling within the narrow band of the interface, defined by $|\phi_\omega(\mathbf{x})| < C_{\mathrm{refine}}$ with $C_{\mathrm{refine}} = 0.18$, are dynamically selected. These points are subsequently merged with the fixed collocation points to construct the full set, $\big\{\mathbf{x}_i^p\big\}_{i=1}^{N_p}$, which is used to evaluate the loss terms $\mathcal{L}_{pde}$, $\mathcal{L}_{eik}$, and $\mathcal{L}_r$.

To enforce the far-field boundary condition, $N_{far} = 1{,}000$ points are sampled with uniformly spaced angles and random radii $r \in (1{,}000, 2{,}000)$ from the center of $\Omega$. The loss term $\mathcal{L}_{far}$ enforces $\nabla U_\theta = 0$ at these locations.

We employ the typical Adam algorithm to minimize the total loss, where the selection of weights $\lambda_i$ ($i=1,\cdots,5$) is shown in Table \ref{tab:case1_params}. As discussed in Section \ref{subsec_loss}, determining optimal weights in PINNs can be challenging, which is not our goal, and we follow a heuristic approach to select their values. The principle is to ensure that each loss term is appropriately minimized during training iterations. The relatively large values of $\lambda_1$ and $\lambda_2$ reflect the need to penalize the boundary and far-field loss terms rigorously; the regularization weights $\lambda_4$ and $\lambda_5$ are taken small, consistent with the auxiliary role of $\mathcal{L}_r$ and $\mathcal{L}_\Theta$. Figure~\ref{fig:case1_loss} plots the value of each loss term over $25{,}000$ training iterations. It shows that all loss terms decrease steadily and remain reasonable in magnitude throughout training, indicating that the selected weights yield a well-balanced optimization. In this example, we take $\tau=10^{-2}$ and $\tilde{\tau}=2\times10^{-2}$ for the interface-aware backpropagation. Table~\ref{tab:case1_params} summarizes the settings of hyperparameters.

\begin{table}[htbp]
\centering
\caption{Hyperparameter settings for numerical examples 1 to 4}
\label{tab:case1_params}
\begin{tabular}{ccccccccc}
\toprule
Parameter & $\lambda_1$ & $\lambda_2$ & $\lambda_3$ & $\lambda_4$ & $\lambda_5$ & $\tau$ & $\tilde{\tau}$ & $C_{\mathrm{refine}}$ \\
\midrule
Value & $20$ & $20$ & $1$ & $5\times10^{-3}$ & $10^{-5}$ & $10^{-2}$ & $2\times10^{-2}$ & $0.18$ \\
\bottomrule
\end{tabular}
\end{table}

Figure \ref{fig:case1_density}\,(Right) shows the inverted density $\rho_{\omega}(\mathbf{x})$; Figure~\ref{fig:case1_data} plots the predicted gravity data $\mathbf{g}=\nabla U_\theta$ as a dashed blue line. 
The predicted data perfectly matches the measured data, which is consistent with the vanishing behavior of $\mathcal{L}_{bc}$ shown in Figure \ref{fig:case1_loss}. The inverted solution successfully recovers the shape of the density model. Given that the gravity inverse problem is ill-posed, the solution is sufficiently good.

\begin{figure}[htbp]
	\centering
	\includegraphics[width=0.58\textwidth]{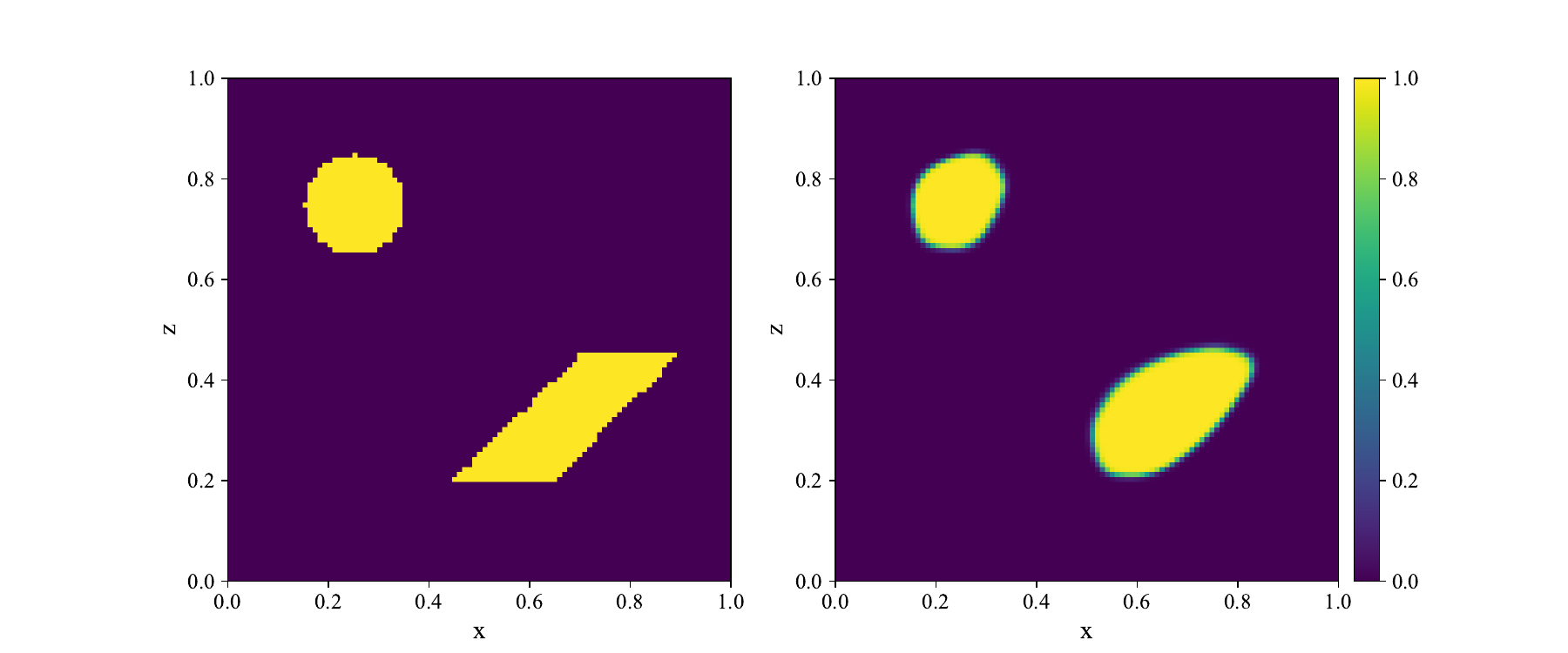}
	\caption{Example~1: A circle and a parallelogram. Left: True density. Right: Inverted density.}
	\label{fig:case1_density}
\end{figure}

\begin{figure}[htbp]
	\centering
	\includegraphics[width=0.88\textwidth]{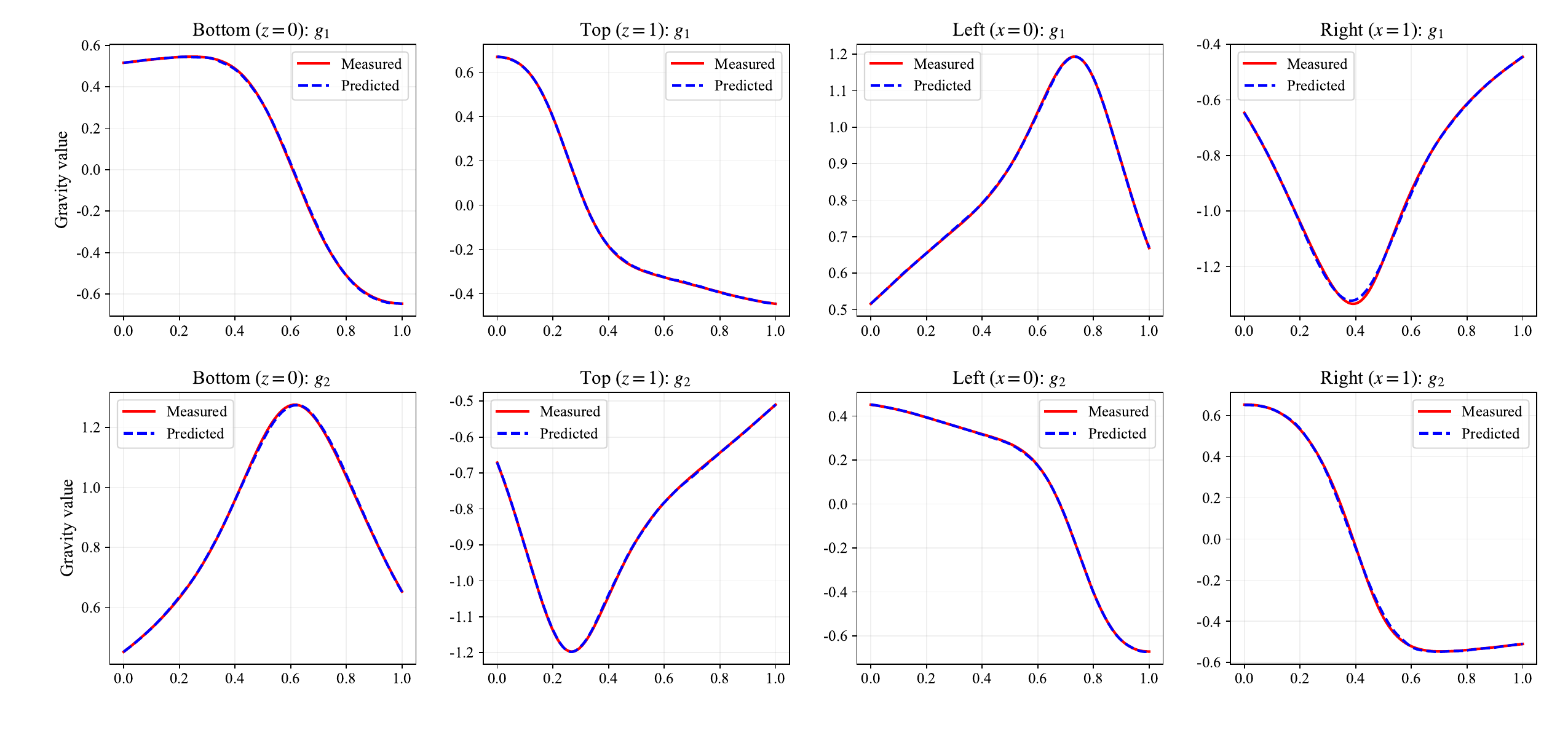}
	\caption{Example~1: Gravity-data fitting. The first row shows the fitting for $g_1:=\frac{\partial U}{\partial x}$ along the four boundaries, and the second row shows the fitting for $g_2:=\frac{\partial U}{\partial z}$. The red line plots the measured data, and the dashed blue line plots the predicted data. The unit of gravity data is milligal.}
	\label{fig:case1_data}
\end{figure}

\begin{figure}[htbp]
	\centering
	\includegraphics[width=0.50\textwidth]{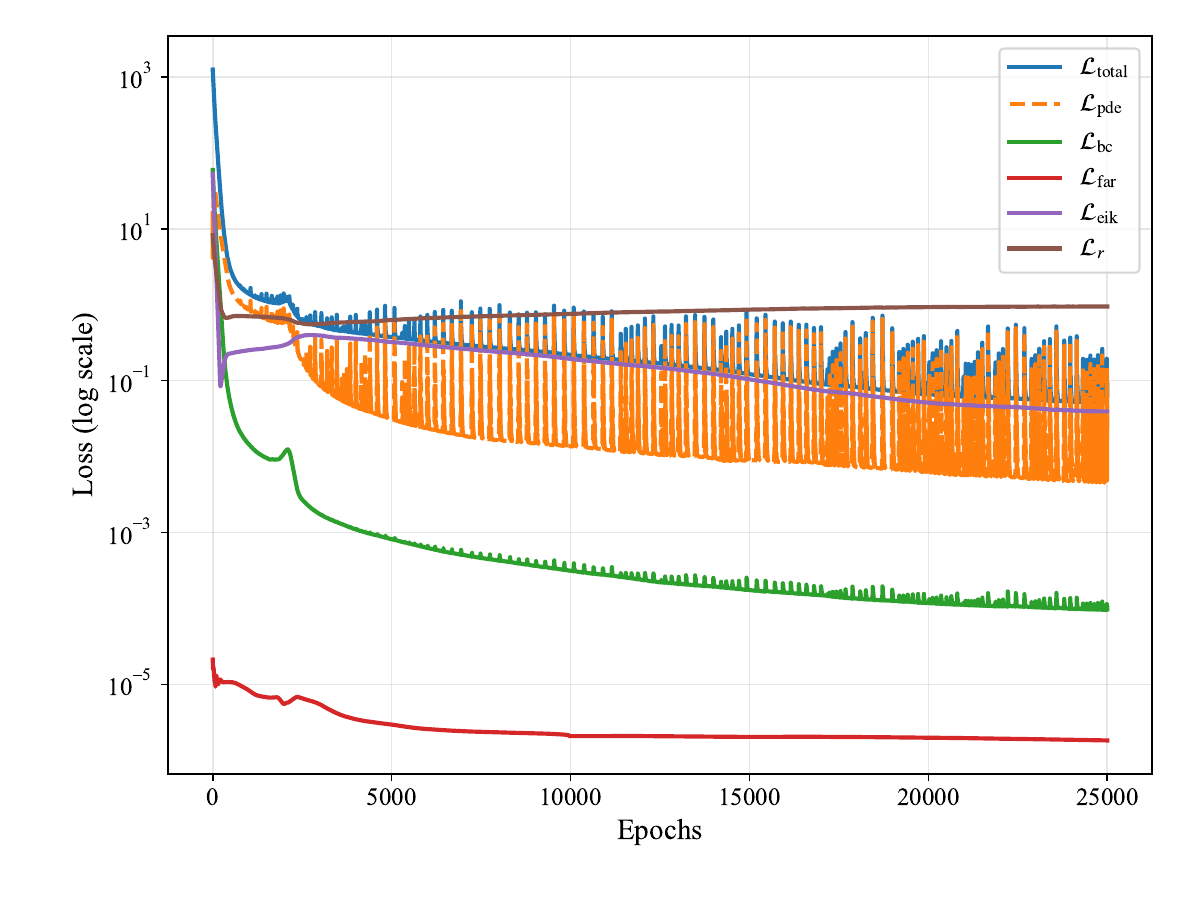}
	\caption{Example~1: Convergence history. The values of the individual loss terms guide the selection of their weighting coefficients.}
	\label{fig:case1_loss}
\end{figure}

As a baseline comparison, we consider a standard PINNs approach where the density function $\rho_{\omega}(\mathbf{x})$ is directly represented by a ReLU neural network. Accordingly, in the loss function (\ref{eqn3.7}), the terms involving the level set function, $\mathcal{L}_{eik}$ and $\mathcal{L}_r$, are removed, while all other settings remain unchanged. Figures \ref{fig:case1_comparison1_density}, \ref{fig:case1_comparison1_data}, and \ref{fig:case1_comparison1_loss} present the corresponding results. Although both the data fitting and the convergence plots appear excellent, the inverted density function deviates substantially from the true solution. In this case, the recovered density distribution contains almost no meaningful information.

Furthermore, we modify the above standard PINNs by incorporating a sigmoid activation function in the output layer of the density network to constrain the values of $\rho_\omega(\mathbf{x})$ within the range $[0, 1]$; all other settings are kept unchanged. The results are presented in Figures \ref{fig:case1_comparison2_density}, \ref{fig:case1_comparison2_data}, and \ref{fig:case1_comparison2_loss}. While this modification leads to an improvement in the predicted density distribution, the results remain substantially inferior to those obtained by our level-set PINNs.

\begin{figure}[htbp]
	\centering
	\includegraphics[width=0.58\textwidth]{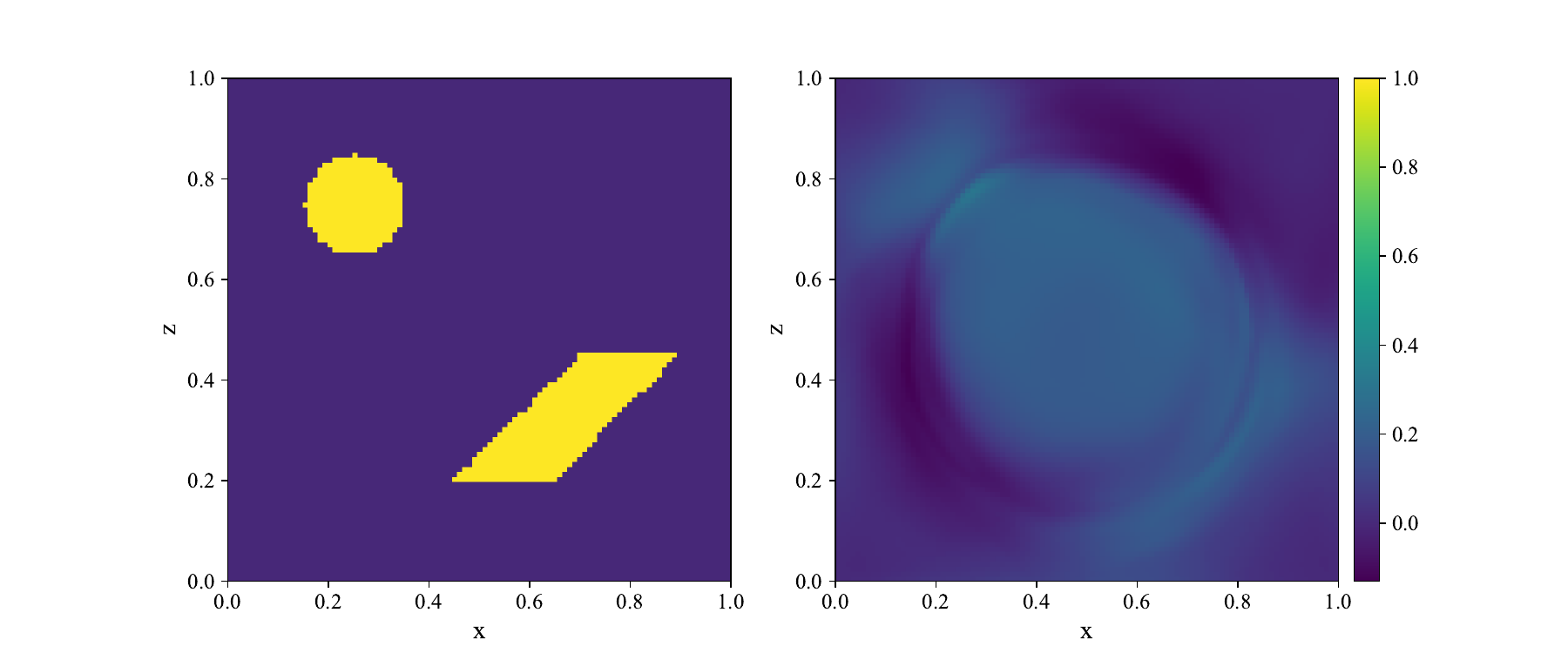}
	\caption{Baseline Comparison for Example 1: Standard PINNs. Left: True density. Right: Inverted density.}
	\label{fig:case1_comparison1_density}
\end{figure}

\begin{figure}[htbp]
	\centering
	\includegraphics[width=0.88\textwidth]{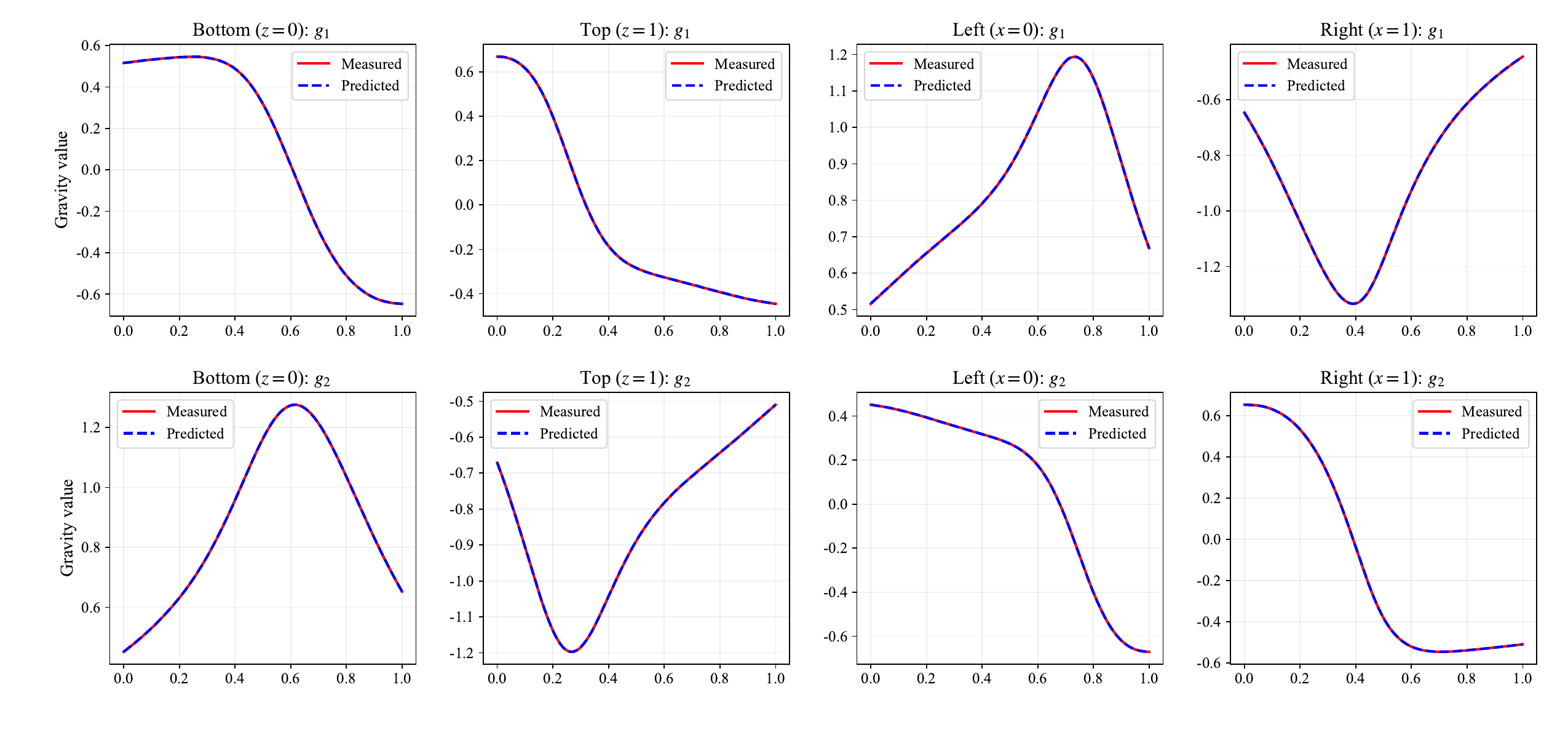}
	\caption{Baseline Comparison for Example 1: Standard PINNs. The performance of data fitting. The first row shows the fitting for $g_1$ along the four boundaries, and the second row shows the fitting for $g_2$.}
	\label{fig:case1_comparison1_data}
\end{figure}

\begin{figure}[htbp]
	\centering
	\includegraphics[width=0.50\textwidth]{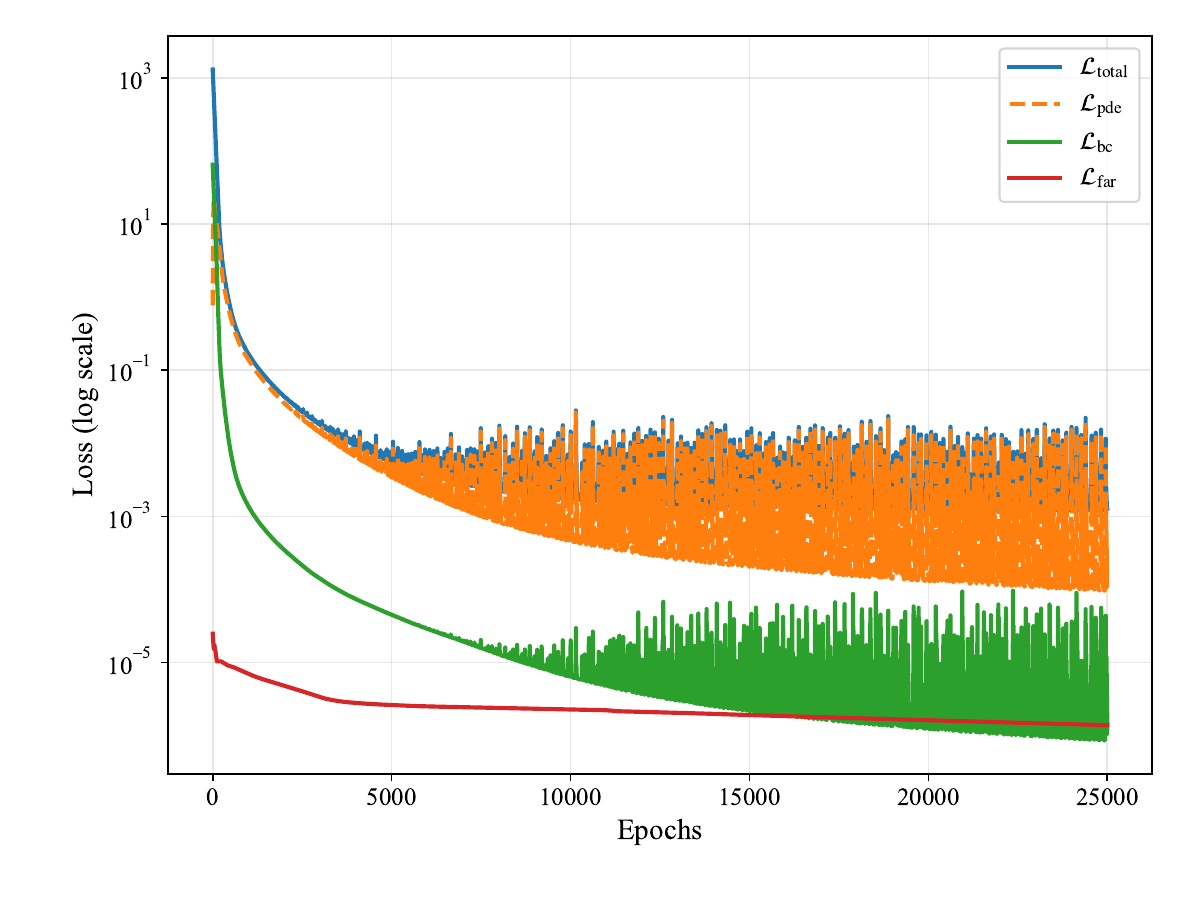}
	\caption{Baseline Comparison for Example 1: Standard PINNs. Convergence history.}
	\label{fig:case1_comparison1_loss}
\end{figure}

\begin{figure}[htbp]
	\centering
	\includegraphics[width=0.58\textwidth]{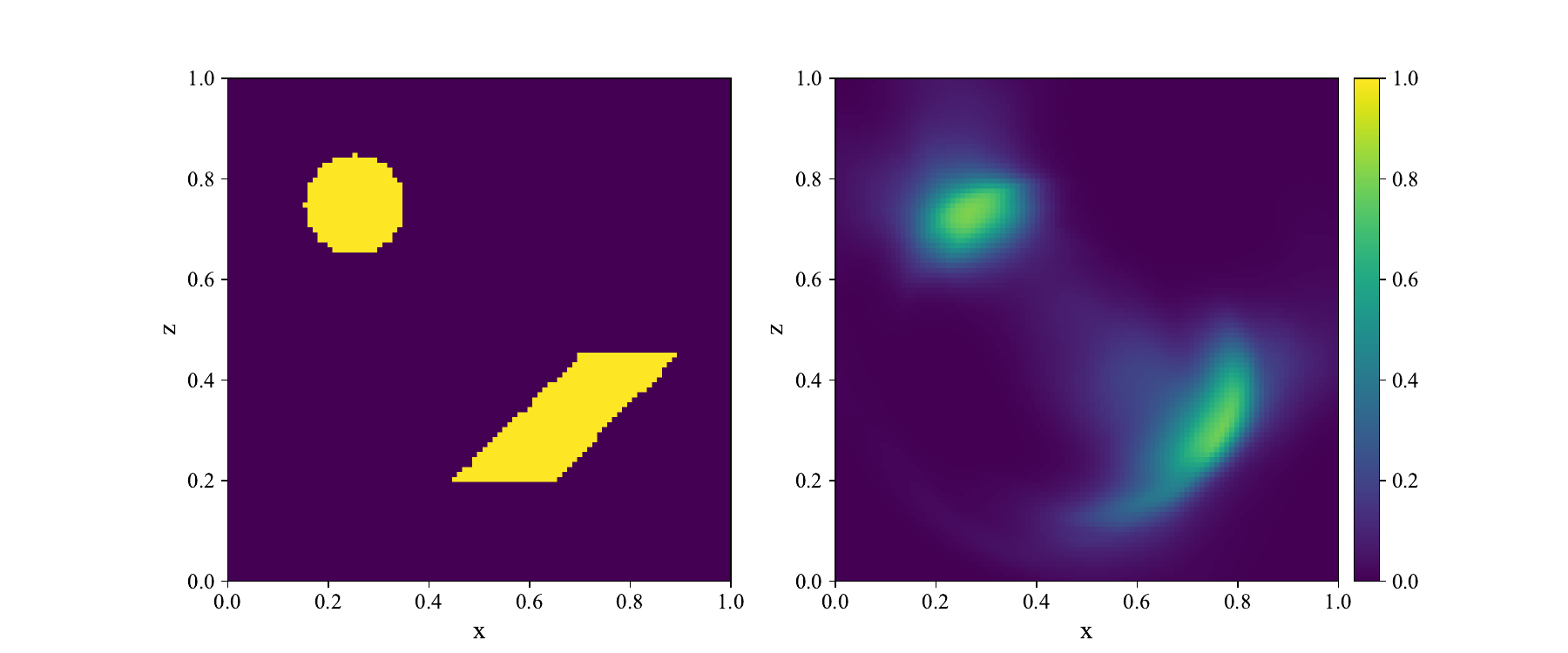}
	\caption{Comparison for Example 1: Standard PINNs with a sigmoid activation for $\rho_\omega$. Density functions. Left: True density. Right: Inverted density.}
	\label{fig:case1_comparison2_density}
\end{figure}

\begin{figure}[htbp]
	\centering
	\includegraphics[width=0.88\textwidth]{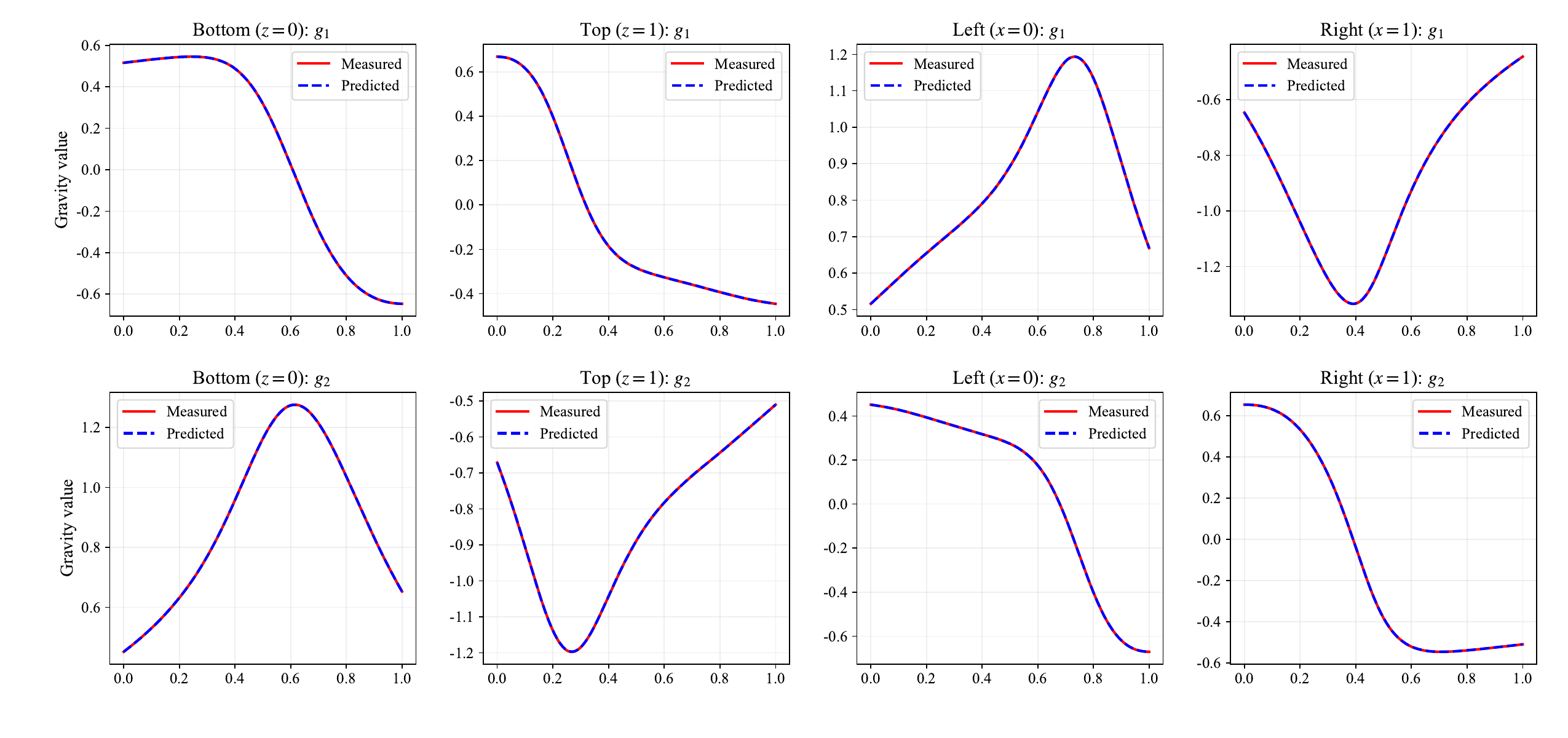}
	\caption{Comparison for Example 1: Standard PINNs with a sigmoid activation for $\rho_\omega$. The performance of data fitting. The first row shows the fitting for $g_1$ along the four boundaries, and the second row shows the fitting for $g_2$.}
	\label{fig:case1_comparison2_data}
\end{figure}

\begin{figure}[htbp]
	\centering
	\includegraphics[width=0.50\textwidth]{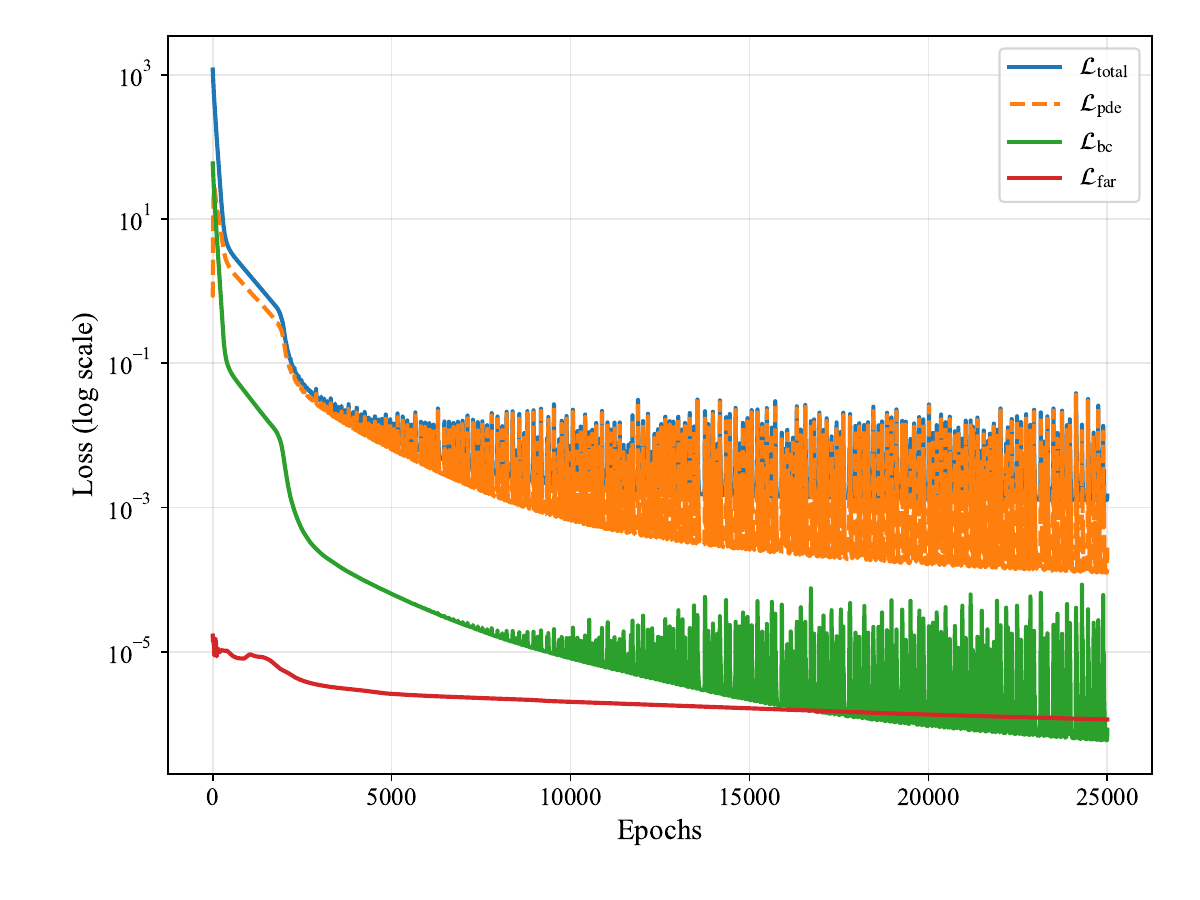}
	\caption{Comparison for Example 1: Standard PINNs with a sigmoid activation for $\rho_\omega$. Convergence history.}
	\label{fig:case1_comparison2_loss}
\end{figure}

\subsection{3D examples}
\subsubsection{Example 2: Four spherical anomalies}

Figure \ref{fig:case5_density_slices}\,(a) (Left) shows the true model, which contains four spherical anomalies with a constant density contrast of $f=1\mathrm{g}/\mathrm{cm}^3$. The 3D computational domain is $\Omega = (0, 1)^3$\,km, and the gravity data $\mathbf{g}=\nabla U$ are measured along $\partial\Omega$. There are $51^2$ measurements uniformly distributed on each of the six boundary faces, yielding a total of $N_b = 6 \times 51^2 = 15{,}606$ boundary evaluations; points along shared edges and corners are counted multiple times. The gravity data are simulated according to Equation~(\ref{data_simulation}).

To train the level-set PINNs, $40^3$ collocation points are randomly sampled in the computational domain $\Omega$ and remain fixed throughout training. The adaptive refinement strategy of Section~3.4 is applied in the same manner as in Example~1. For the far-field boundary condition, $N_{far} = 5{,}000$ points are sampled in the exterior region at distances between $1{,}000$ and $2{,}000$ from the center of $\Omega$, with uniformly spaced polar and azimuthal angles and random radial distances. All hyperparameters, including the loss weights and the interface-aware backpropagation parameters, are set to the same values as in Example~1 (see Table~\ref{tab:case1_params}).

Figure~\ref{fig:case5_density_slices}\,(a) (Right) shows the inverted density model, and Figure~\ref{fig:case5_density_slices}\,(b) plots the cross-sectional slices at $y=0.2$\,km, $y=0.8$\,km, and $z=0.5$\,km, respectively. The inverted density distribution maintains sharp interfaces that successfully recover the true anomaly boundaries. Figure~\ref{fig:case5_data} illustrates the gravity-data fitting on the top  measurement surface ($z=1$\,km), confirming a good match between the predicted and measured data.

\begin{figure}[htbp]
	\centering
	\begin{subfigure}[b]{0.70\textwidth}
		\centering
		\includegraphics[width=\textwidth]{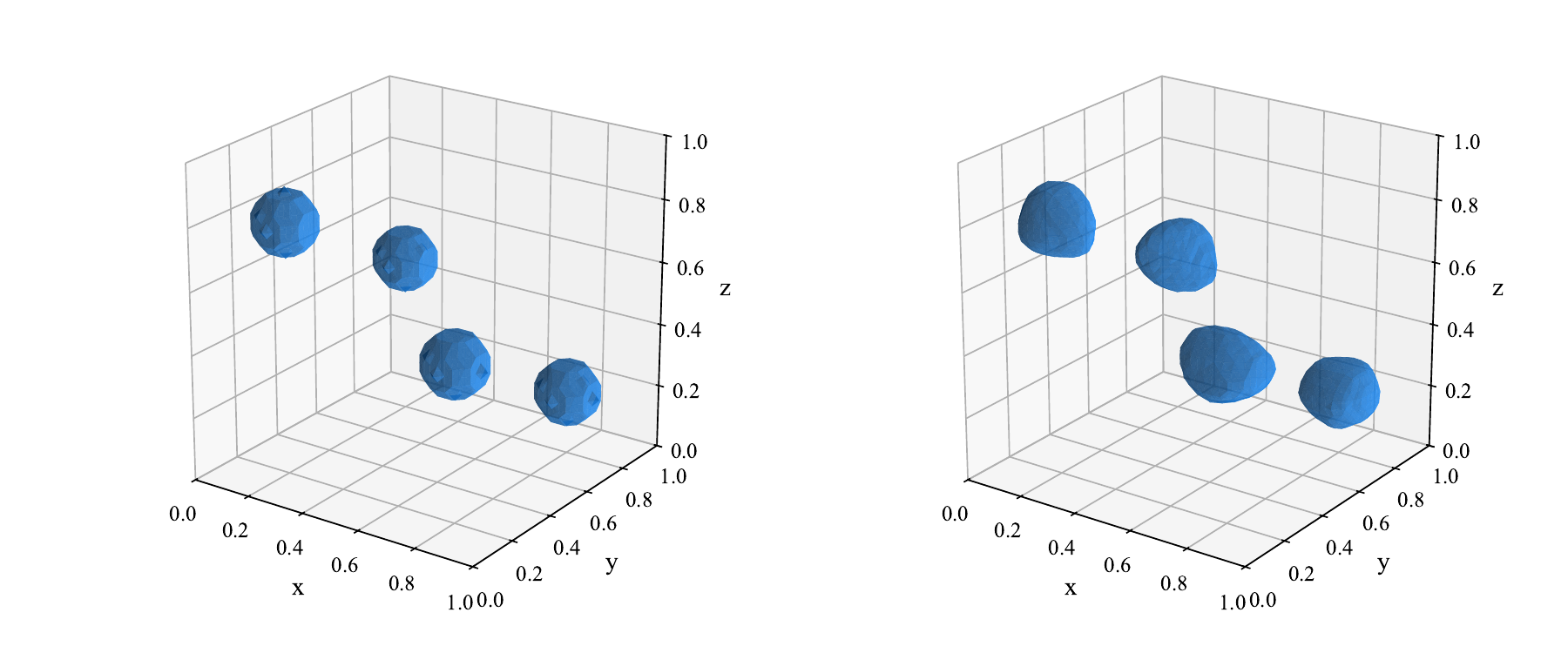}
		\caption{Left: True density. Right: Inverted density.}
	\end{subfigure}\\[6pt]
	\begin{subfigure}[b]{0.82\textwidth}
		\centering
		\includegraphics[width=\textwidth]{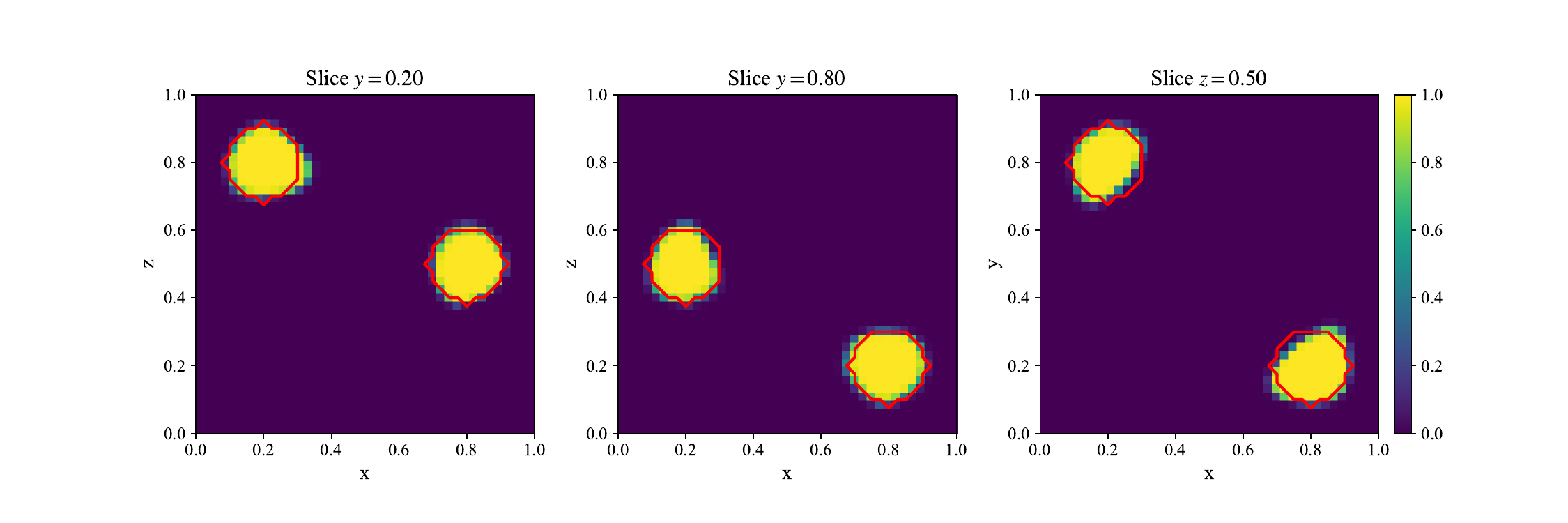}
		\caption{Slices of the inverted model, where the true model is outlined in red.}
	\end{subfigure}
	\caption{Example~2: Four spherical anomalies. (a) True and inverted density models. (b) Slices of the inverted model.}
	\label{fig:case5_density_slices}
\end{figure}

\begin{figure}[htbp]
	\centering
	\includegraphics[width=0.70\textwidth]{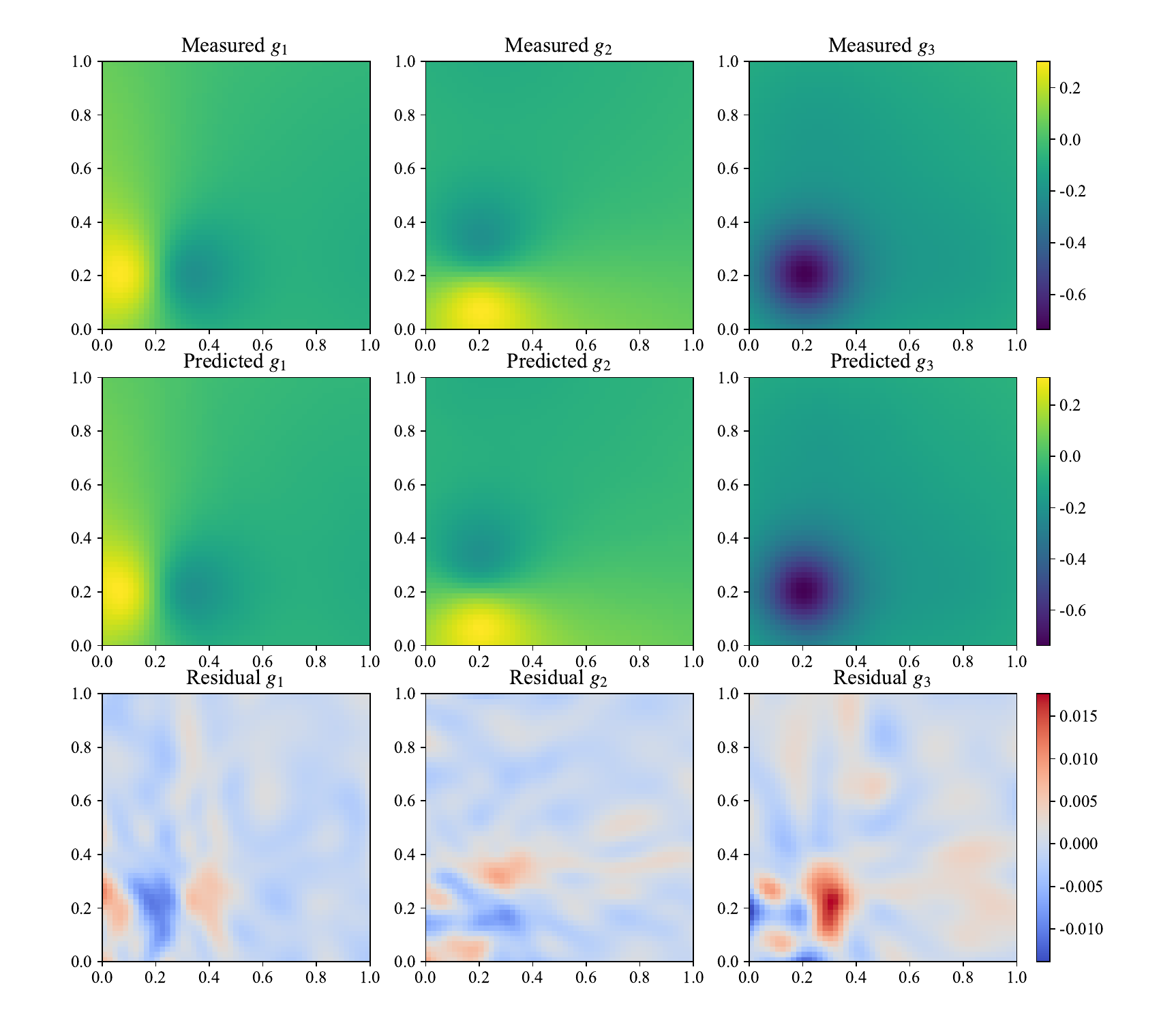}
	\caption{Example~2: Gravity-data fitting on the top measurement surface ($z=1$\,km). The first, second, and third rows display the measured data, predicted data, and residuals, respectively; $(g_1, g_2, g_3) = \nabla U$.}
	\label{fig:case5_data}
\end{figure}


\subsubsection{Example 3: Anomalies with mixed geometries}

Figure \ref{fig:case6_density_slices}(a) (Left) shows the true model, which features three anomalies with distinct geometries and a constant density contrast of $f = 1\mathrm{g/cm^3}$. The computational setup, including the inversion domain, measurement configuration, collocation point sampling, adaptive refinement, and far-field point distribution, is identical to that of Example~2. All hyperparameters are likewise maintained as specified in Table~\ref{tab:case1_params}.

Figure~\ref{fig:case6_density_slices} presents the inverted density model, where Figure \ref{fig:case6_density_slices}(a) (Right) displays the zero-level-set isosurface, and Figure~\ref{fig:case6_density_slices}\,(b) plots the cross-sectional slices at $x=0.2$\,km, $x=0.5$\,km, and $x=0.8$\,km. The inverted model captures the overall geometries of the three anomalies; in particular, the dipping parallelepiped structure is successfully recovered. Considering the ill-posedness and difficulty of gravity inversion, this reconstruction is adequate and satisfactory.
Figure~\ref{fig:case6_data} illustrates the gravity-data fitting on the top measurement surface ($z=1$\,km), confirming a good agreement between the predicted and measured data.

\begin{figure}[htbp]
	\centering
	\begin{subfigure}[b]{0.70\textwidth}
		\centering
		\includegraphics[width=\textwidth]{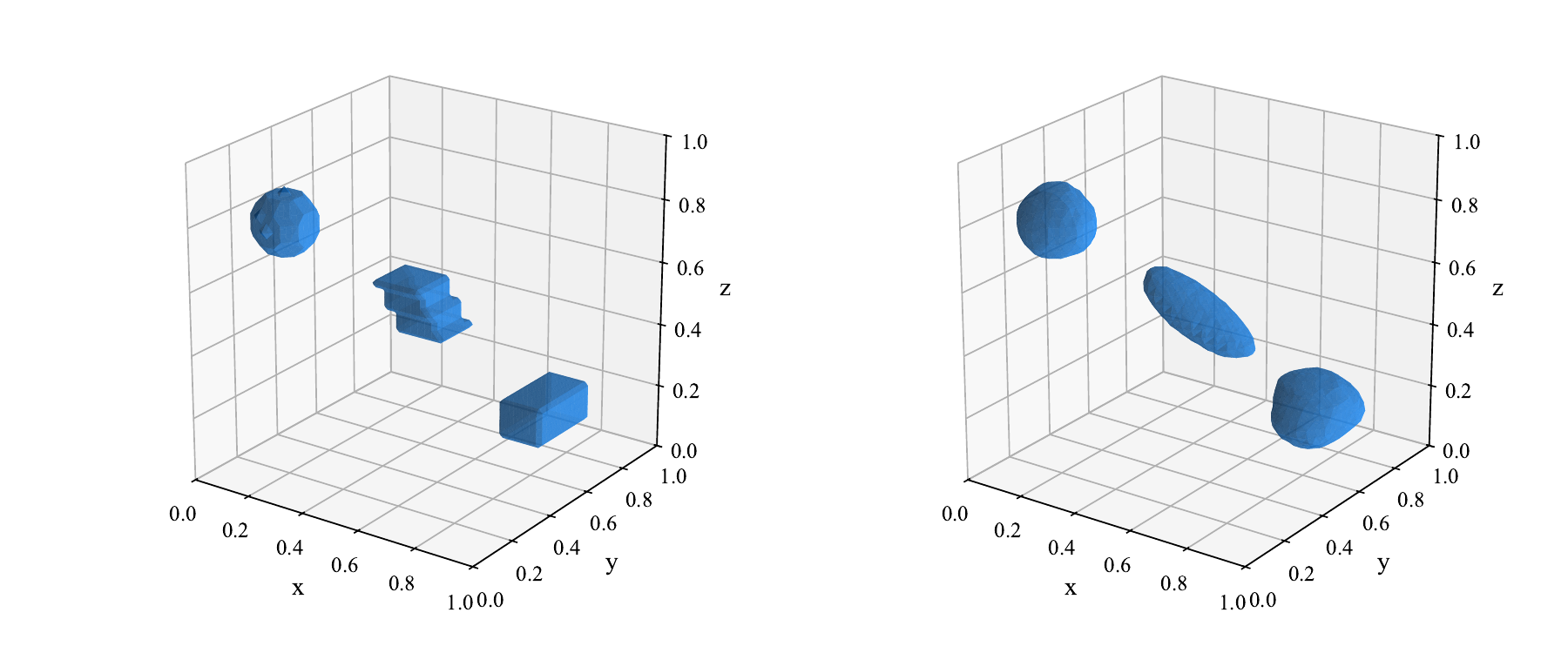}
		\caption{Left: True density. Right: Inverted density.}
	\end{subfigure}\\[-5pt]
	\begin{subfigure}[b]{0.82\textwidth}
		\centering
		\includegraphics[width=\textwidth]{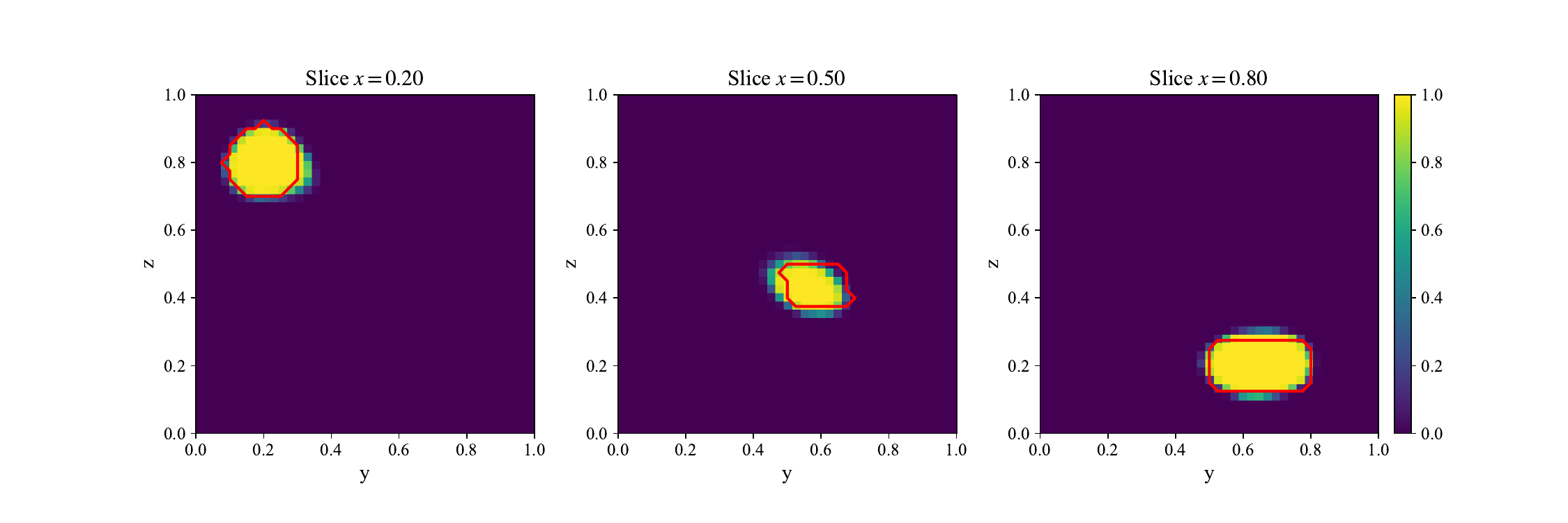}
		\caption{Slices of the inverted model, where the true model is outlined in red.}
	\end{subfigure}
	\caption{Example~3: Anomalies with mixed geometries. (a) True and inverted density models. (b) Slices of the inverted model.}
	\label{fig:case6_density_slices}
\end{figure}

\begin{figure}[htbp]
	\centering
	\includegraphics[width=0.70\textwidth]{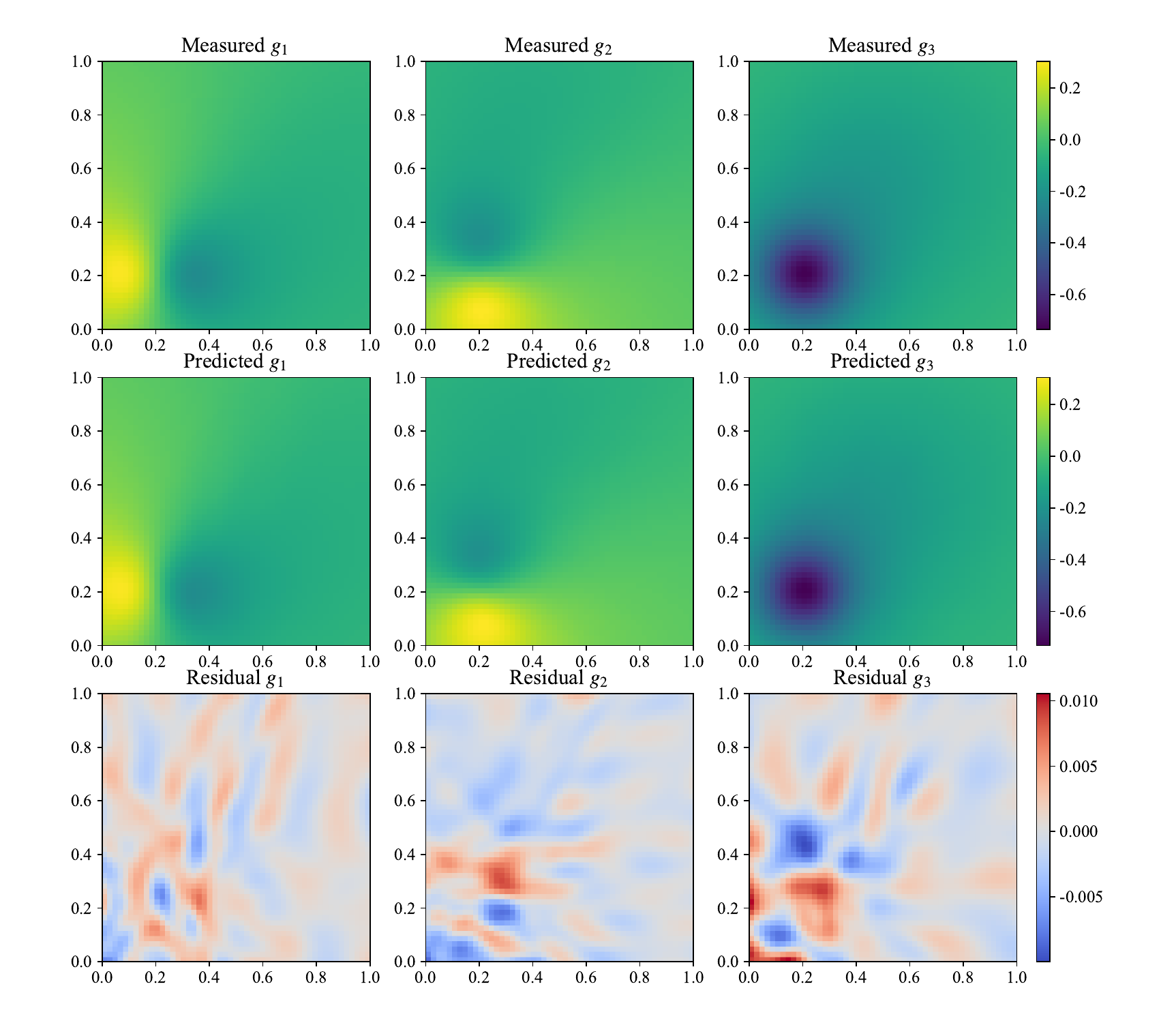}
	\caption{Example~3: Gravity-data fitting on the top measurement surface ($z=1$\,km). The first, second, and third rows display the measured data, predicted data, and residuals, respectively; $(g_1, g_2, g_3) = \nabla U$.}
	\label{fig:case6_data}
\end{figure}


\subsubsection{Example 4: Anomalies with depth-dependent density contrast}

In the last example, we consider a depth-dependent density contrast for the mass anomalies, defined as $f(\mathbf{x}) = 0.5 + z$. The geometry of the true model is shown in Figure \ref{fig:case7_density_slices}(a) (Left). The computational setup and all hyperparameters are identical to those of Examples~2 and~3 (see Table~\ref{tab:case1_params}).

Figure~\ref{fig:case7_density_slices} presents the inverted density model, including its zero-level-set isosurface and cross-sectional slices. Figure~\ref{fig:case7_data} illustrates the gravity-data fitting on the top measurement surface ($z=1$\,km). The level-set PINNs successfully recovers the mass anomalies with varying density contrast, and the predicted data exhibits a good agreement with the measured data.

\begin{figure}[htbp]
	\centering
	\begin{subfigure}[b]{0.70\textwidth}
		\centering
		\includegraphics[width=\textwidth]{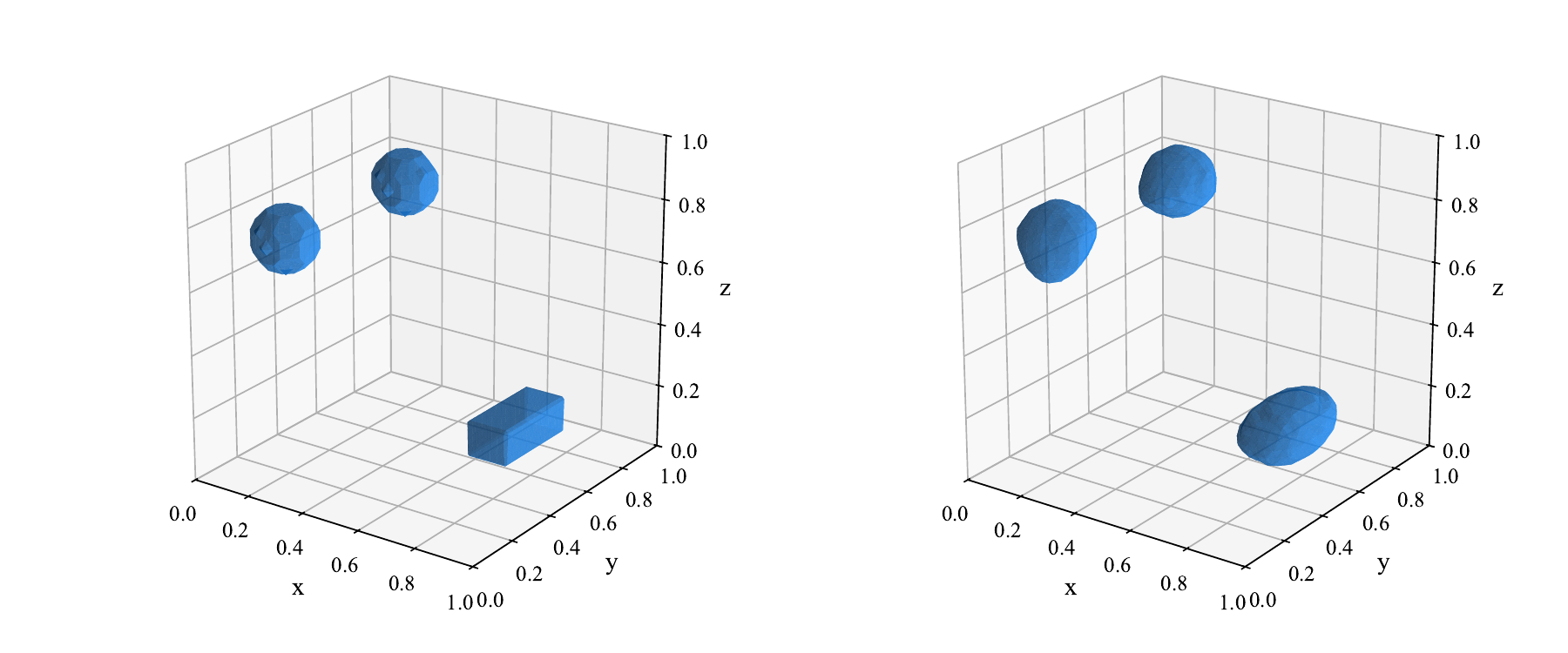}
		\caption{Left: True density. Right: Inverted density.}
	\end{subfigure}\\ [-5pt]
	\begin{subfigure}[b]{0.82\textwidth}
		\centering
		\includegraphics[width=\textwidth]{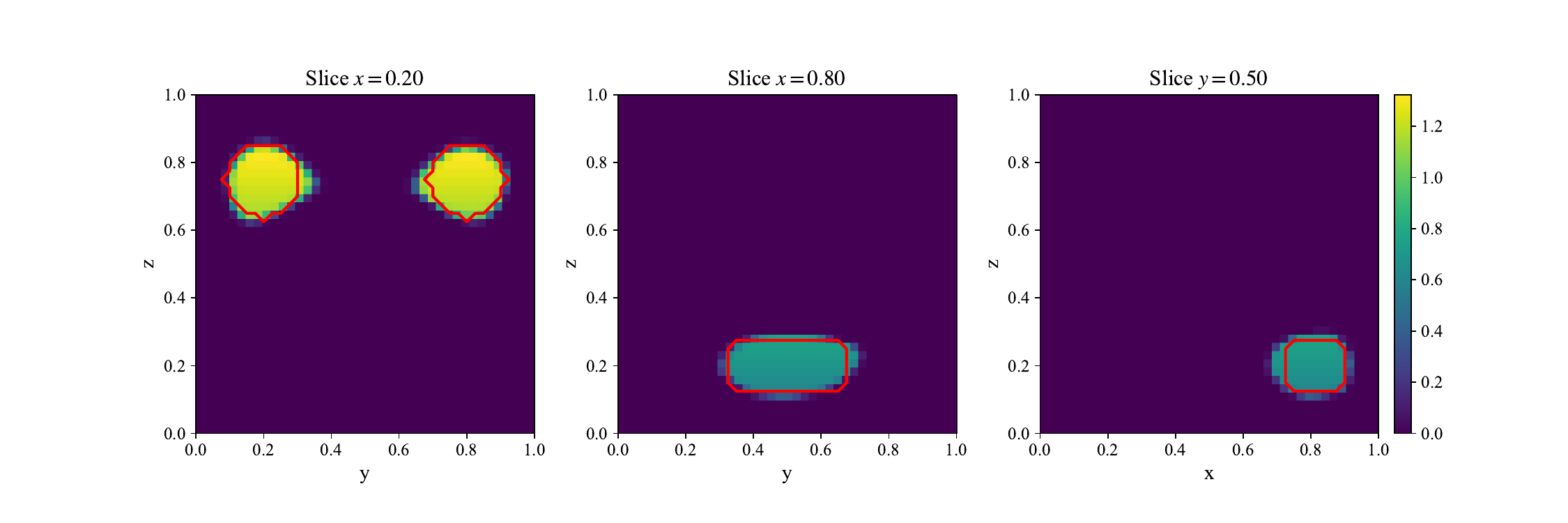}
		\caption{Slices of the inverted model, where the true model is outlined in red.}
	\end{subfigure}
	\caption{Example~4: Anomalies with depth-dependent density contrast. (a) True and inverted density models. (b) Slices of the inverted model.}
	\label{fig:case7_density_slices}
\end{figure}

\begin{figure}[htbp]
	\centering
	\includegraphics[width=0.70\textwidth]{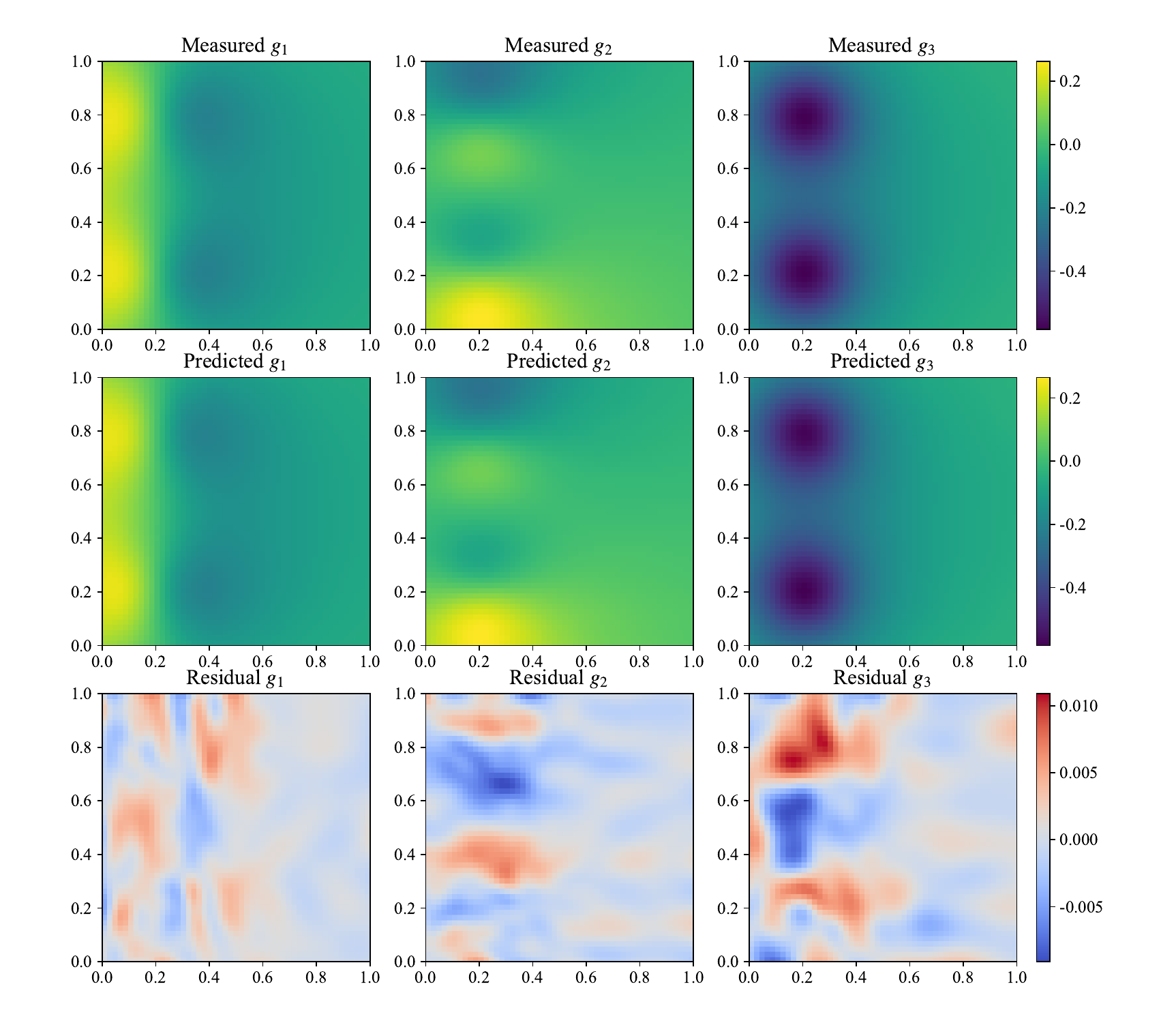}
	\caption{Example~4: Gravity-data fitting on the top measurement surface ($z=1$\,km). The first, second, and third rows display the measured data, predicted data, and residuals, respectively; $(g_1, g_2, g_3) = \nabla U$.}
	\label{fig:case7_data}
\end{figure}


\section{Conclusions} \label{sec5}
We have proposed the method of level-set physics-informed neural networks (PINNs) for solving domain inverse problems of gravimetry. 
The paradigm of PINNs is adopted to construct the physical loss function for inverse gravimetry, where the challenge is resolving unknown domains involving discontinuous interface structures. By representing the domain shape via the zero level-set of a continuous neural network, we circumvent the difficulties of directly parameterizing discontinuous interfaces under PINNs. To drive the network training, we exploit the interface evolution mechanism inherent to the level-set method. A useful contribution is the interface-aware backpropagation strategy, which redefines derivative computations near interfaces to mitigate exploding and vanishing gradients. It enables a broader support region for interface derivatives while preserving the sharp interface structure. A detailed analysis justifies its efficacy. Furthermore, we enhance the computational framework by introducing an adaptive refinement procedure for interfacial collocation points, and investigate network architecture choices based on their solution spaces and approximation properties. Extensive 2D and 3D numerical experiments confirm that level-set PINNs successfully solve domain inverse problems of gravimetry.
Beyond the scope of inverse gravimetry, our work provides broader insights into the development of neural network methods for general domain and interface reconstruction problems. Future research will look forward expanding these concepts into model-guided architectures that embed physical laws directly into unrolled neural network layers (e.g., \cite{tsameletc25}).

\section*{Acknowledgments}
Wenbin Li is supported by the Natural Science Foundation of Shenzhen (JCYJ20240813104841055), and the National Science and Technology Major Project for Deep Earth Exploration and Mineral Prospecting (2024ZD1002700).

\bibliographystyle{siamplain}
\bibliography{myref}

\end{document}